
\documentclass[reqno,11pt]{amsart}
\usepackage{amssymb}
\usepackage{mathrsfs}
\usepackage{txfonts}
\usepackage{amsfonts}
\usepackage{amstext}
\usepackage{amssymb}
\usepackage{amsmath}
\usepackage{graphicx}
\usepackage{hyperref}
\usepackage{url}

\setcounter{MaxMatrixCols}{10}

\hypersetup{
        colorlinks   = true,
        citecolor    = blue,
        linkcolor    = blue,
        urlcolor     = blue
}
\allowdisplaybreaks
\newtheorem{theorem}{Theorem}[section]
\newtheorem{proposition}[theorem]{Proposition}
\newtheorem{lemma}[theorem]{Lemma}

\newtheorem{remark}[theorem]{Remark}
\newtheorem{definition}[theorem]{Definition}

\newtheorem{example}[theorem]{Example}

\DeclareMathOperator*{\einf}{einf}
\DeclareMathOperator*{\esup}{esup}

\DeclareMathOperator*{\eosc}{eosc}

\setlength{\textheight}{23 cm}
\setlength{\textwidth}{15 cm}
\setlength{\topmargin}{-1cm}
\setlength{\oddsidemargin}{1.0 cm}
\setlength{\evensidemargin}{1.0 cm}
\numberwithin{equation}{section}
\newcounter{counterConstant}

\makeatletter

\newcommand{\Rmnum}[1]{\expandafter\@slowromancap\romannumeral #1@}
\@namedef{subjclassname@2020}{\textup{2020} Mathematics Subject Classification}
\makeatother

\begin{document}
\title[Weak Elliptic Harnack Equality]{The weak elliptic Harnack inequality
revisited}
\author[Hu]{Jiaxin Hu}
\address{The Department of Mathematical Sciences, Tsinghua University,
Beijing 100084, China.}
\email{hujiaxin@tsinghua.edu.cn}
\author[Yu]{Zhenyu Yu}
\address{The Department of Mathematical Sciences, Tsinghua University,
Beijing 100084, P.R. China.}
\email{yuzy18@mails.tsinghua.cn}
\thanks{\noindent Supported by the National Natural Science Foundation of
China (No. 11871296).}
\date{\today }

\begin{abstract}
In this paper we firstly derive the weak elliptic Harnack inequality from
the generalized capacity condition, the tail estimate of jump measure and
the Poincar\'{e} inequality, for any regular Dirichlet form without killing
part on a measure metric space, by using the lemma of growth and the
John-Nirenberg inequality. We secondly show several equivalent
characterizations of the weak elliptic Harnack inequality for any (not
necessarily regular) Dirichlet form. We thirdly present some consequences of
the weak elliptic Harnack inequality.\bigbreak

\centering{\textit{Dedicated to the memory of Professor Ka-Sing Lau. }}
\end{abstract}

\subjclass[2020]{31C25, 30L15, 35K08, 28A80}
\keywords{Weak Harnack inequality, Lemma of growth, mean exit time,
Dirichlet form.}
\maketitle
\tableofcontents

\section{Introduction and main results}

In 1961, Moser showed in \cite{Moser.1961.CPAM577} that the following \emph{%
elliptic Harnack inequality}, denoted by $(\mathrm{H})$, is true: for any
compact $D^{\prime }$ in a domain $D\subset \mathbb{R}^{n}$ and for any
function $u$ which is non-negative, harmonic (with respect to the symmetric,
uniformly elliptic divergence-form operator) in $D$, we have 
\begin{equation*}
\sup_{D^{\prime }}u\leq C\inf_{D^{\prime }}u,
\end{equation*}%
where $C=C(D^{\prime },D)\geq 1$ is a constant depending only on $D^{\prime
},D$. The importance of this inequality is that the constant $C$ is
independent of function $u$ (but may depend on two domains $D^{\prime }$, $D$%
). If further $D^{\prime },D$ are two concentric balls, for example, if $%
D=B(x,R)$ and $D^{\prime }=B(x,R/2)$, then 
\begin{equation}
\sup_{B(x,R/2)}u\leq C\inf_{B(x,R/2)}u,  \label{11}
\end{equation}%
where the constant $C\geq 1$ is independent not only of function $u$, but
also of ball $B$. The inequality (\ref{11}) says that a function, which is
both non-negative and harmonic in a ball, is nearly constant around the
center. The reader may consult a book \cite[Theorem 2.1.1]%
{Saloff-Coste.2002.190} for more details.

A symmetric, uniformly elliptic operator gives arise to a strongly local,
regular Dirichlet form in the Hilbert space $L^{2}(\mathbb{R}^{n},dx)$ (see
for example \cite[Chapter 1]{FukushimaOshimaTakeda.2011.489}\ on the basic
theory of Dirichlet forms on a Hilbert space). The elliptic Harnack
inequality plays an important role in analysis, for example, in showing the
uniformly local H\"{o}lder continuity of harmonic functions, or in obtaining
the lower estimate of the heat kernel, for a given Dirichlet form on a
metric space.

Since the Moser's celebrated paper \cite{Moser.1961.CPAM577}, there has been
an increasing interest in the study on the Harnack inequality for local
Dirichlet forms. In 1972, Bombieri and Giusti \cite{Bombieri.1972.IM24} used
the geometric analysis to prove a Harnack inequality for elliptic
differential equations on minimal surfaces. In 1980, Safonov \cite%
{Safonov.1983.ZNS272} obtained the elliptic Harnack inequalities for partial
differential operators in non-divergence form. After that, the elliptic
Harnack inequality was extended in various settings, see for example, by
Benedetto and Trudinger \cite[Theorem 3]{BenedettoTrudinger1984} in 1984 for
De Giorgi classes on Euclidean spaces, by Biroli and Mosco \cite%
{BiroliMosco.1995.AANLCSFMNRL9MA37} in 1995 for a certain class of local
Dirichlet forms on discontinuous media, by Strum \cite[Proposition 3.2]%
{Sturm.1996.JMPA9273} in 1996 for time-dependent local Dirichlet forms on
compact metric spaces, and by Cabr\'{e} \cite{Cabre.1997.CPAM623} in 1997
for non-divergence elliptic operators on Riemannian manifolds with
non-negative curvature. In 2005, Barlow \cite[Theorem 2]{Barlow.2005.BLMS200}
showed that the elliptic Harnack inequality is equivalent to an annulus-type
Harnack inequality for Green's functions in the context of random walks on
graphs. In 2015, Grigor'yan, Hu and Lau \cite{GrigoryanHuLau.2015.JMSJ1485}
gave an equivalent characterization for the elliptic Harnack inequality and
the mean exit time estimate combined, for any strongly local, regular
Dirichlet form on a metric measure space, by using a more general Poincar%
\'{e} inequality and the generalized capacity inequality (see also an
earlier work \cite{GrigoryanHu.2014.CJM641}). In 2018, Barlow and Murugan 
\cite{BarlowMurugan.2018} showed that the elliptic Harnack inequality is
stable under bounded perturbations for strongly local, regular Dirichlet
forms on a length metric space, but assuming the existence of Green
function. Recently, this result has been improved by Barlow, Chen and
Murugan in \cite{BarlowChenMurugan.2020}, without assuming the existence of
Green functions and a length but assuming the relative ball-connectedness.

The Harnack inequality above is investigated only for local Dirichlet forms.
In recent years, the people have begun to study the elliptic Harnack
inequality for non-local operators or non-local Dirichlet forms. It can be
imagined that the classical Harnack inequality like the version (\ref{11})
no longer holds for non-local operators (see, for example \cite[Section 3]%
{BassChen2010} and \cite[Theorem 2.2]{Dyda.2020.AP317} for $\alpha $-stable
processes). Instead, a weak Harnack inequality different from (\ref{11})
should take place. In this direction, the reader may refer to \cite[Theorem
1.2]{Castro.2014.JFA1807}, \cite{DiKuusiPalatucci.2016.AIHPANL1279} and \cite%
[Theorem 1.6]{Dyda.2020.AP317} for non-local integro-differential operators, 
\cite{Maldonado.2017.CVPDE103} for the fractional non-local linearized
Monge-Amp\`{e}re equation, and \cite{ChenKumagaiWang.2016.EH} for pure jump
type Dirichlet forms.

In this paper, we are concerned with the weak elliptic Harnack inequality
under a more general framework (We do not touch the parabolic Harnack
inequality in this paper). Our underlying space is a metric measure space,
which may be bounded or unbounded, and our Dirichlet form is mixed, which
may be local or non-local, whose jump kernel may not exist. The main results
of this paper are as follows:

\begin{itemize}
\item to establish the weak elliptic Harnack inequality for local or
non-local regular Dirichlet forms (Theorem \ref{thm:M1} below);

\item to study the relationship among different versions of the weak
elliptic Harnack inequality appearing in the literature (Theorem \ref%
{thm:M10} below).
\end{itemize}

Let us state our framework of this paper. Let $(M,d)$ be a locally compact
separable metric space and $\mu $ be a Radon measure on $M$ with full
support. The triple $(M,d,\mu )$ is called a \emph{metric measure space}.
Denote by $B\left( x,r\right) $ an open metric ball of radius $r>0$ centered
at $x$, that is,%
\begin{equation*}
B_{r}(x):=B\left( x,r\right) :=\{y\in M:d(y,x)<r\},
\end{equation*}%
and its volume function is denoted by 
\begin{equation*}
V\left( x,r\right) :=\mu \left( B\left( x,r\right) \right) .
\end{equation*}%
For a ball $B=$ $B\left( x,r\right) $ and $\lambda >0$, the letter $\lambda
B:=B\left( x,\lambda r\right) $ denotes the concentric ball of $B$. In this
paper, we assume that every ball $B(x,r)$ is precompact.

Note that a ball in a metric space may not have a unique centre and radius,
and even if the centre is fixed, the radius may not be unique. For this
reason we always require a ball to have a fixed centre and radius in this
paper. When we pick up a ball $B(y,s)$ contained in a bigger ball $B(x,r)$,
we always assume that its radius $s$ is less than $2r$. Let $\overline{R}$
be any number in $(0,\mathrm{diam}$($M)]$. Since the metric space considered
in this paper may be bounded or unbounded, the number $\overline{R}$ may be
finite or infinite.

We say that the \emph{volume doubling condition} $(\mathrm{VD})$ holds if
there exists a constant $C_{\mu }\geq 1$ such that for all $x\in M$ and $r>0$%
, 
\begin{equation}
V(x,2r)\leq C_{\mu }V(x,r).  \label{eq:vol_1}
\end{equation}%
It is known that if condition $(\mathrm{VD})$ holds, then there exists a
positive number $d_{2}$ such that for all $x,y\in M$ and all $0<r\leq
R<\infty $, 
\begin{equation}
\frac{V(x,R)}{V(y,r)}\leq C_{\mu }\left( \frac{d(x,y)+R}{r}\right) ^{d_{2}}
\label{eq:vol_3}
\end{equation}%
with the same constant $C_{\mu }$ in (\ref{eq:vol_1}), see for example \cite[%
Proposition 5.1]{GrigoryanHu.2014.MMJ505}.

We say that the \emph{reverse} \emph{volume doubling condition} $(\mathrm{RVD%
})$ holds if there exist two positive constants $C_{d}\leq 1$ and $d_{1}$
such that for all $x\in M$ and $0<r\leq R<\overline{R}$ 
\begin{equation}
\frac{V(x,R)}{V(x,r)}\geq C_{d}\left( \frac{R}{r}\right) ^{d_{1}}.
\label{eq:vol_2}
\end{equation}

Let $w:M\times \lbrack 0,\infty )\rightarrow \lbrack 0,\infty )$ be a map
such that $w(x,\cdot )$ is continuous, strictly increasing, $w(x,0)=0$, for
any fixed $x$ in $M$. Assume that there exist positive constants $%
C_{1},C_{2} $ and $\beta _{2}\geq \beta _{1}$ such that for all $0<r\leq
R<\infty $ and all $x,y\in M$ with $d(x,y)\leq R$, 
\begin{equation}
C_{1}\left( \frac{R}{r}\right) ^{\beta _{1}}\leq \frac{w(x,R)}{w(y,r)}\leq
C_{2}\left( \frac{R}{r}\right) ^{\beta _{2}}.  \label{eq:vol_0}
\end{equation}%
For convenience, we write for any metric ball $B=B(x,R)$ 
\begin{equation*}
w(B):=w(x,R).
\end{equation*}%
Note that the symbol $w(B)$ is \emph{sensitive} to the center and radius of
ball $B$.

Denote the norm in $L^p:=L^p(M,\mu)\ (1\leq p<\infty)$ by 
\begin{equation*}
||u||_p:=\left(\int_M|u(x)|^p\mu(dx)\right)^{1/p},
\end{equation*}
and $||u||_{L^\infty}:=\esup_{x\in M}|u(x)|$, where $\esup$ is the essential
supremum.

Let $(\mathcal{E},\mathcal{F})$ be a regular Dirichlet form in $L^{2}$
without killing part, that is, 
\begin{equation}
\mathcal{E}(u,v)=\mathcal{E}^{(L)}(u,v)+\mathcal{E}^{(J)}(u,v),
\label{eq:DF-LJ}
\end{equation}%
where $\mathcal{E}^{(L)}$ is the \emph{local part} (or \emph{diffusion part}%
) and $\mathcal{E}^{(J)}$ is the \emph{jump part}. Let $\mathcal{F}_{\mathrm{%
loc}}$ be a space of all measurable functions $u$ on $M$ such that for every
precompact open subset $U$ of $M$, there exists \newline
some function $v\in \mathcal{F}$ such that $u=v$ for $\mu $-almost
everywhere in $U$. Then, there exists a unique Radon measure $d\Gamma
^{(L)}\langle u\rangle :=d\Gamma ^{(L)}\langle u,u\rangle $ such that 
\begin{equation*}
\mathcal{E}^{(L)}(u,u)=\int_{M}d\Gamma ^{(L)}\langle u\rangle
\end{equation*}%
for $u\in \mathcal{F}_{\mathrm{loc}}\cap L^{\infty }$, see for example \cite[%
Lemma 3.2.3, and the first two paragraphs on p.130]%
{FukushimaOshimaTakeda.2011.489}, wherein the symbols $\mathcal{E}^{(\mathrm{%
c})}=\mathcal{E}^{(L)}$ and $d\mu _{\langle u,u\rangle }^{(\mathrm{c}%
)}=2d\Gamma ^{(L)}\langle u,u\rangle $ are used instead. For the jump part,
there exists a unique Radon measure $J(dx,dy)$ defined on $M\times
M\setminus $diag such that 
\begin{equation}
\mathcal{E}^{(J)}(u,u)=\underset{M\times M\setminus \text{diag}}{\iint }%
(u(x)-u(y))^{2}J(dx,dy)  \label{eq:jump-DF}
\end{equation}%
for all continuous functions $u\in \mathcal{F}$ with compact supports on $M$%
. For simplicity, we let the measure $J=0$ on $\mathrm{diag}$ and will drop $%
\mathrm{diag}$ in expression $M\times M\setminus $diag in (\ref{eq:jump-DF})
when no confusion arises. In the sequel, set%
\begin{equation*}
\mathcal{E}(u):=\mathcal{E}(u,u)
\end{equation*}%
for convenience.

For any non-empty open subset $\Omega $ of $M$, let $C_{0}(\Omega )$ be a
space of all continuous functions with compact supports in $\Omega $. Denote
by $\mathcal{F}(\Omega )$ the closure of $\mathcal{F}\cap C_{0}(\Omega )$ in
the norm of 
\begin{equation*}
\sqrt{\mathcal{E}(\cdot ,\cdot )+(\cdot ,\cdot )}.
\end{equation*}%
Recall that for any non-empty open subset $\Omega $ of $M$, the form $(%
\mathcal{E},\mathcal{F}(\Omega ))$ is a regular Dirichlet form in $%
L^{2}(\Omega )$ if $(\mathcal{E},\mathcal{F})$ is regular. Let $\left\{
P_{t}^{\Omega }\right\} _{t\geq 0}$ be the heat semigroup associated with $(%
\mathcal{E},\mathcal{F}(\Omega ))$. Let 
\begin{equation*}
\mathcal{F}^{\prime }:=\{v+a:\ v\in \mathcal{F},\ a\in \mathbb{R}\}
\end{equation*}%
be a vector space that contains constant functions. We extend the domain of $%
\mathcal{E}$ to $\mathcal{F}^{\prime }$ as follows: for all $u,v\in \mathcal{%
F}$ and $a,b\in \mathbb{R}$, set 
\begin{equation*}
\mathcal{E}(u+a,v+b):=\mathcal{E}(u,v).
\end{equation*}%
We point that the extension is well defined by using (\ref{eq:DF-LJ}).

Let $U\Subset V$ (that means $U$ is precompact and the closure of $U$ is
contained in $V$) be two non-empty open subsets of $M$. We say that a
measurable function $\phi $ is a \emph{cutoff function} for $U\Subset V$,
denoted by $\phi \in \mathrm{cutoff}(U,V)$, if $\phi \in \mathcal{F}$, and 
\begin{align*}
\phi & =1\ \ \text{on}\ \ U, \\
\phi & =0\ \ \text{on}\ \ V^{c}, \\
0& \leq \phi \leq 1\ \ \text{on}\ \ M.
\end{align*}%
It is known that if $(\mathcal{E},\mathcal{F})$ is regular, the set $\mathrm{%
cutoff}(U,V)$ is non-empty for any two non-empty open subsets $U\Subset V$
of $M$.

We introduce conditions $(\mathrm{Gcap})$ and $(\mathrm{Cap}_{\leq })$.

\begin{definition}[condition $(\mathrm{Gcap})$]
We say that \emph{condition }$(\mathrm{Gcap})$ holds if for any $u\in 
\mathcal{F}^{\prime }\cap L^{\infty }$ and any two concentric metric balls $%
B_{0}:=B(x_{0},R)$, $B:=B(x_{0},R+r)$ with $0<R<R+r<\overline{R}$, there
exists some $\phi \in \mathrm{cutoff}(B_{0},B)$ such that 
\begin{equation}
\mathcal{E}(u^{2}\phi ,\phi )\leq \frac{C}{w(x_{0},r)}\int_{B}u^{2}d\mu ,
\label{eq1}
\end{equation}%
where $C>0$ is a constant independent of $u,B_{0},B$, but $\phi $ may depend
on $u$.
\end{definition}

\begin{definition}[condition $(\mathrm{Cap}_{\leq })$]
We say that \emph{condition }$(\mathrm{Cap}_{\leq })$ holds if there exists
a constant $C>0$ such that for all balls $B$ of radius $R$ less than $%
\overline{R}$ 
\begin{equation}
\mathrm{cap}((2/3)B,B)\leq C\frac{\mu (B)}{w(B)},  \label{eq2}
\end{equation}%
where the capacity $\mathrm{cap}(A,\Omega )$ for any two open subsets $%
A\Subset \Omega $ of $M$ is defined by 
\begin{equation*}
\mathrm{cap}(A,\Omega )\coloneqq\inf \{\mathcal{E}(\varphi ,\varphi ):\
\varphi \in \mathrm{cutoff}(A,\Omega )\}.
\end{equation*}
\end{definition}

Clearly, condition $(\mathrm{Gcap})$ implies condition $(\mathrm{Cap}_{\leq
})$ by taking $u=1$ in (\ref{eq1}) and by using the second inequality in (%
\ref{eq:vol_0}).

\begin{definition}[condition $(\mathrm{FK})$]
We say that condition $(\mathrm{FK})$ holds if there exist three positive
constants $C_{F}$, $\nu $ and $\sigma \in (0,1)$ such that for any ball $%
B:=B(x,r)$ with $0<r<\sigma \overline{R}$ and any non-empty open subset $%
D\subset B$, 
\begin{equation}
\lambda _{1}(D)\geq \frac{C_{F}^{-1}}{w(B)}\left( \frac{\mu (B)}{\mu (D)}%
\right) ^{\nu },  \label{eq:vol_4}
\end{equation}%
where $\lambda _{1}(D)$ is defined by 
\begin{equation*}
\lambda _{1}(D):=\inf_{u\in \mathcal{F}(D)\backslash \{0\}}\frac{\mathcal{E}%
(u,u)}{||u||_{2}^{2}}.
\end{equation*}
\end{definition}

\noindent Without loss of generality, we can assume that $0<\nu <1$ by
noting that $\frac{\mu (B)}{\mu (D)}\geq 1$.

\begin{definition}[condition $(\mathrm{PI})$]
We say that condition $(\mathrm{PI})$ holds if there exist two constants $%
\kappa $ $\geq 1$, $C$ $>0$ such that for any metric ball $B:=B(x_{0},r)$
with $0<r<\overline{R}/\kappa $ and any $u\in \mathcal{F}^{\prime }\cap
L^{\infty }$,%
\begin{equation}
\int_{B}(u-{u_{B}})^{2}d\mu \leq Cw(B)\left\{ \int_{\kappa B}d\Gamma
^{(L)}\langle u\rangle +\int_{(\kappa B)\times (\kappa
B)}(u(x)-u(y))^{2}J(dx,dy)\right\} ,  \label{eq:vol_5}
\end{equation}%
where $u_{B}$ is the average of the function $u$ over $B$, that is,%
\begin{equation*}
u_{B}=\frac{1}{\mu (B)}\int_{B}ud\mu \eqqcolon\fint_{B}ud\mu
\end{equation*}
\end{definition}

For a transition kernel $J(x,E)$ defined on $M\times \mathcal{B}(M)$ where $%
\mathcal{B}(M)$ is the collection of all Borel subsets of $M$, denote by%
\begin{equation}
J(x,E)\coloneqq\int_{E}J(x,dy).  \label{J1}
\end{equation}

We introduce condition $(\mathrm{TJ})$.

\begin{definition}[condition $(\mathrm{TJ})$]
We say that condition $(\mathrm{TJ})$ holds if there exists a transition
kernel $J(x,E)$ on $M\times \mathcal{B}(M)$ such that, for any point $x$ in $%
M$ and any $R>0$,%
\begin{equation}
\begin{aligned} &J(dx,dy) =J(x,dy)\mu (dx)\text{ \ and } \\ &J(x,B(x,R)^{c})
\leq \frac{C}{w(x,R)} \end{aligned}  \label{eq:vol_16}
\end{equation}%
for a non-negative constant $C$ independent of $x,R$.
\end{definition}

For an open subset $\Omega $ of $M$ and a function $f\in L^{2}(\Omega )$, we
say that a function $u\in \mathcal{F}$ is $f$-\emph{superharmonic} (resp. $f$%
-\emph{subharmonic}) in $\Omega $ if for any non-negative $\varphi \in 
\mathcal{F}(\Omega )$, 
\begin{equation}
\mathcal{E}(u,\varphi )\geq (f,\varphi )\text{ \ (resp. }\mathcal{E}%
(u,\varphi )\leq (f,\varphi )\text{).}  \label{f-har}
\end{equation}%
We say that a function $u\in \mathcal{F}$ is $f$-\emph{harmonic} in $\Omega $
if $u$ is both $f$-{superharmonic} and $f$-{subharmonic} in $\Omega $. If $%
f\equiv 0$, an $f$-superharmonic is shortened \emph{superharmonic}, and a
similar notion applies to an $f$-subharmonic or an $f$-harmonic.

\noindent For any two open subsets $U\Subset \Omega $ of $M$ and any
measurable function $v$, denote by 
\begin{equation}
T_{U,\Omega }(v)\coloneqq\esup_{x\in U}\int_{\Omega ^{c}}|v(y)|J(x,dy).
\label{tj2}
\end{equation}%
\noindent We introduce \emph{condition }$($\textrm{wEH}$)$, the \emph{weak
elliptic Harnack inequality.}

\begin{definition}[condition $($\textrm{wEH}$)$]
\label{df:wehi1}We say that condition $(\mathrm{wEH})$ holds if there exist
four universal constants $p,\delta ,\sigma $ in $(0,1)$ and $C_{H}\geq 1$
such that, for any two concentric balls $B_{r}:=B(x_{0},r)\subset
B_{R}:=B(x_{0},R)$ with $0<r\leq \delta R$, $R<\sigma \overline{R}$, any
function $f\in L^{\infty }(B_{R})$, and for any $u\in \mathcal{F}^{\prime
}\cap L^{\infty }$ that is non-negative, $f$-superharmonic in $B_{R}$, 
\begin{equation}
\left( \fint_{B_{r}}u^{p}d\mu \right) ^{1/p}\leq C_{H}\left( \einf_{{B}%
_{r}}u+w(B_{r})\left( T_{\frac{3}{4}B_{R},B_{R}}(u_{-})+\Vert f\Vert
_{L^{\infty }(B_{R})}\right) \right) ,  \label{eq:vol_65}
\end{equation}%
where $u_{-}:=0\vee (-u)$ is the negative part of function $u$, and $T_{%
\frac{3}{4}B_{R},B_{R}}$ is defined by (\ref{tj2}), that is,%
\begin{equation*}
T_{\frac{3}{4}B_{R},B_{R}}(u_{-})=\esup_{x\in \frac{3}{4}B_{R}}\int_{M%
\setminus B_{R}}u_{-}(y)J(x,dy).
\end{equation*}%
We remark that the constants $p,\delta ,\sigma ,C_{H}$ are all independent
of $x_{0},R,r,f$ and $u$.
\end{definition}

\begin{remark}
\label{R1}If $u$ is superharmonic, non-negative in $B_{R}$, then (\ref%
{eq:vol_65}) reads%
\begin{equation}
\left( \fint_{B_{r}}u^{p}d\mu \right) ^{1/p}\leq C_{H}\left( \einf_{{B}%
_{r}}u+w(B_{r})T_{\frac{3}{4}B_{R},B_{R}}(u_{-})\right) .  \label{12-1}
\end{equation}%
If the form $(\mathcal{E},\mathcal{F})$ is strongly local and $u$ is
harmonic, non-negative in $B_{R}$, then (\ref{eq:vol_65}) becomes%
\begin{equation}
\left( \fint_{B_{r}}u^{p}d\mu \right) ^{1/p}\leq C_{H}\einf_{{B}_{r}}u,
\label{12-2}
\end{equation}%
and in this situation, we in fact have that the weak Harnack inequality (\ref%
{12-2}) is equivalent to the strong Harnack inequality (\ref{11}), since the
inequality (\ref{12-2}) is equivalent to the following%
\begin{equation}
\einf_{B_{r}}u\geq a\exp \left( -\frac{C}{\omega _{B_{r}}(\{u\geq a\})}%
\right) \text{ \ for any }a>0  \label{12-3}
\end{equation}%
by using the equivalence $(\mathrm{wEH})\Leftrightarrow (\mathrm{wEH2})$ in
Theorem \ref{thm:M10} below where condition $(\mathrm{wEH2})$ will be stated
in Definition \ref{df:wehi3} and by using the fact that (\ref{12-3}) $%
\Rightarrow (\mathrm{H})$ in \cite[from Corollary 7.3 to Theorem 7.8 on
pages 1525-1535]{GrigoryanHuLau.2015.JMSJ1485}.
\end{remark}

The weak elliptic Harnack inequality says that for any function $u$, which
is non-negative and superharmonic in a ball $B_{R}$, its mean value over a
smaller concentric ball $B_{r}$ in the $L^{p}$-quantity (not a norm) for a
small $p\in (0,1)$, can be controlled by its essential infimum over the
smaller ball $B_{r}$, plus a tail estimate outside the ball $B_{R}$.

The main results of this paper are stated in Theorems \ref{thm:M1} and \ref%
{thm:M10} below.

\begin{theorem}
\label{thm:M1}Let $(\mathcal{E},\mathcal{F})$ be a regular Dirichlet form in 
$L^{2}(M,\mu )$ without killing part. Then%
\begin{equation}
(\mathrm{VD})+(\mathrm{RVD})+(\mathrm{Gcap})+(\mathrm{TJ})+(\mathrm{PI}%
)\Rightarrow (\mathrm{wEH}).  \label{20}
\end{equation}
\end{theorem}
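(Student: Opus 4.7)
The plan is to follow the De Giorgi--Moser scheme suggested in the abstract: prove a \emph{lemma of growth} by energy methods, then bootstrap it via a John--Nirenberg type distributional inequality. I first reduce to a strictly positive function by replacing $u$ with
\[
\tilde u := u + w(B_R)\bigl(T_{\frac{3}{4}B_R,B_R}(u_-) + \|f\|_{L^\infty(B_R)}\bigr) + \varepsilon,
\]
and sending $\varepsilon\to 0^+$ at the end; because the Dirichlet form has no killing part, $\tilde u$ is again $f$-superharmonic in $B_R$. If I can show
\[
\Bigl(\fint_{B_r}\tilde u^p\,d\mu\Bigr)^{1/p}\leq C\,\einf_{B_r}\tilde u
\]
for some $p,C$ depending only on the hypotheses, then \eqref{eq:vol_65} follows. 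As a preparatory step I would derive the Faber--Krahn inequality $(\mathrm{FK})$ from $(\mathrm{VD})+(\mathrm{Gcap})+(\mathrm{PI})+(\mathrm{TJ})$, so that an $L^{2}\to L^{2(1+\eta)}$ self-improvement is available in the iteration below.

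The core step is the following lemma of growth: if $\mu(\{\tilde u\geq a\}\cap B_r)\geq \theta\,\mu(B_r)$ for some $0<\theta<1$ and $a>0$, then $\einf_{\sigma B_r}\tilde u\geq c(\theta)\,a$ on a concentric smaller ball. To prove it I would plug $\varphi=(a-\tilde u)_+\phi^{2}$ into the weak superharmonicity condition, with $\phi\in\mathrm{cutoff}(\sigma B_r,B_r)$ chosen according to $(\mathrm{Gcap})$ applied to $(a-\tilde u)_+$. Splitting $\mathcal{E}=\mathcal{E}^{(L)}+\mathcal{E}^{(J)}$ and using the chain rule for $d\Gamma^{(L)}\langle\tilde u\rangle$ together with the algebraic identity for $(\tilde u(x)-\tilde u(y))(\varphi(x)-\varphi(y))$ produces a Caccioppoli-type bound for $v:=(a-\tilde u)_+\phi$. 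Combining it with $(\mathrm{PI})$ applied to $v$ and controlling the non-local contributions via $(\mathrm{TJ})$ and the preliminary shift yields an $L^{2}$-decay of $v$ on level sets; a De Giorgi iteration on $a_k=(\tfrac12+2^{-k-1})a$ upgrades this decay to the claimed pointwise lower bound on $\sigma B_r$.

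Given the lemma of growth, I iterate it on a geometrically shrinking sequence of balls at dyadic levels $a_k=2^{k}\einf_{B_r}\tilde u$. Each step forces an exponential decay
\[
\mu\bigl(\{\log\tilde u-\log\einf_{B_r}\tilde u>k\}\cap B_r\bigr)\leq \beta^{k}\,\mu(B_r)
\]
for some universal $\beta<1$, which is the distributional form of a John--Nirenberg inequality for $\log\tilde u$. Integrating this distribution function against $p\lambda^{p-1}\,d\lambda$ for any sufficiently small $p>0$ yields the $L^{p}$-to-$\einf$ estimate displayed above and hence $(\mathrm{wEH})$.

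The main obstacle is the lemma of growth itself. The Caccioppoli estimate coming from $(\mathrm{Gcap})$ mixes the local measure $d\Gamma^{(L)}$ with the non-local jump measure $J(dx,dy)$, and the truncated test function $(a-\tilde u)_+\phi^{2}$ is \emph{not} supported inside $B_R$; cross terms of the form
\[
\iint_{B_R\times B_R^{c}}(\tilde u(x)-\tilde u(y))\,v(x)\phi(x)^{2}\,J(dx,dy)
\]
must be absorbed using $(\mathrm{TJ})$ together with the shift by $w(B_R)T_{\frac{3}{4}B_R,B_R}(u_-)$. Reproducing precisely the \emph{additive} tail correction $w(B_r)T_{\frac{3}{4}B_R,B_R}(u_-)$ on the right-hand side of \eqref{eq:vol_65}, while keeping all constants uniform through both the De Giorgi iteration and the John--Nirenberg passage, is the real technical labor.
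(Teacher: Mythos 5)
Your overall architecture (Faber--Krahn as a preliminary, a lemma of growth by De Giorgi iteration, then an upgrade to the $L^{p}$-to-$\einf$ estimate) matches the paper's up to the last step, but that last step contains a genuine gap. The lemma of growth that a De Giorgi iteration actually yields --- and the one the paper proves as condition $(\mathrm{LG})$ --- has the form: \emph{if the sublevel set $\{u<a\}$ occupies a sufficiently small fraction of the ball, below a fixed threshold depending on the tail, then $\einf_{\delta B}u\geq \varepsilon a$}. Your version, ``$\mu(\{\tilde u\geq a\}\cap B_{r})\geq\theta\,\mu(B_{r})$ for some $0<\theta<1$ implies $\einf_{\sigma B_{r}}\tilde u\geq c(\theta)a$'', is much stronger: for small $\theta$ it is essentially condition $(\mathrm{wEH}1)$, which by Theorem \ref{thm:M10} is equivalent to $(\mathrm{wEH})$ itself, and it cannot come out of the recursion $m_{k}\leq C2^{\lambda k}m_{k-1}^{1+\nu}$, whose convergence requires the initial sublevel density to lie below a small absolute constant. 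Consequently the next step --- iterating at dyadic levels $a_{k}=2^{k}\einf_{B_r}\tilde u$ to force the decay $\mu(\{\log\tilde u-\log\einf_{B_r}\tilde u>k\}\cap B_{r})\leq\beta^{k}\mu(B_{r})$ --- is unjustified: one application of the growth lemma gives a single decay step only when the superlevel density is already close to one, and the standard way to propagate it (the Krylov--Safonov covering argument, as in the paper's proof of $(\mathrm{wEH}1)\Rightarrow(\mathrm{wEH})$) needs the measure-to-point estimate at densities as small as $C_{\mu}^{-3}\eta$, which you have not established.

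The paper bridges exactly this gap with the crossover lemma: using $(\mathrm{PI})$ and $(\mathrm{Cap}_{\leq})$ (together with $(\mathrm{TJ})$ and the $f$-superharmonicity) it shows that $\ln(u+\lambda)$ belongs to $\mathrm{BMO}$ of a fixed dilate of $B_{R}$ whenever $\lambda\geq w(B_{R})\bigl(T_{\frac{3}{4}B_{R},B_{R}}(u_{-})+\Vert f\Vert_{L^{\infty}(B_{R})}\bigr)$, and the John--Nirenberg inequality then gives $\bigl(\fint_{B_{r}}u_{\lambda}^{p}d\mu\bigr)^{1/p}\bigl(\fint_{B_{r}}u_{\lambda}^{-p}d\mu\bigr)^{1/p}\leq C$; a Chebyshev argument at a well-chosen level $a$ then feeds the fixed-threshold lemma of growth and yields $(\mathrm{wEH})$. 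In your sketch the Poincar\'{e} inequality appears only inside the Caccioppoli/De Giorgi step, and the ``John--Nirenberg type distributional inequality'' is asserted as an output of iterating the growth lemma rather than obtained from a BMO bound on $\log\tilde u$; without either this BMO/John--Nirenberg crossover or a covering argument backed by a small-density measure-to-point lemma, the passage from the growth lemma to the weak Harnack inequality does not close. Two smaller points: the derivation of $(\mathrm{FK})$ genuinely uses $(\mathrm{RVD})$ (which you have among the hypotheses but omit from your list), and the test function $(a-\tilde u)_{+}\phi^{2}$ with $\tilde u\in\mathcal{F}^{\prime}$ needs the approximation by the smooth truncations $F_{n}$ as in Proposition \ref{P21}, since $(\cdot)_{+}$ is not directly admissible in $\mathcal{E}(\tilde u,\cdot)$ for $\mathcal{F}^{\prime}$-functions.
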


We will prove Theorem \ref{thm:M1} at the end of Section \ref{sec:pf}. For
this, we need to show the following implications: 
\begin{eqnarray}
(\mathrm{VD})+(\mathrm{RVD})+(\mathrm{PI}) &\Rightarrow &(\mathrm{FK})\text{
\ (see Section \ref{sec:FK}),}  \label{21} \\
(\mathrm{VD})+(\mathrm{FK})+(\mathrm{Gcap})+(\mathrm{TJ}) &\Rightarrow &(%
\mathrm{LG})\text{ \ (see Section \ref{Sect-LG}),}  \label{21-1} \\
(\mathrm{VD})+(\mathrm{LG})+(\mathrm{Cap}_{\leq })+(\mathrm{PI})
&\Rightarrow &(\mathrm{wEH})\text{ (see Section \ref{sec:pf}),}  \label{22}
\end{eqnarray}%
where condition $(\mathrm{LG})$ is a refinement of the \emph{lemma of growth}
to be stated in Lemma \ref{lemma:LG} below.

We remark that if the metric space $(M,d)$ is unbounded and the scaling
function $w(x,r)$ is independent of point $x$, a similar implication to (\ref%
{21}) was obtained for strongly local Dirichlet forms (cf. \cite[Theorem $%
5.1 $]{GrigoryanHuLau.2015.JMSJ1485}), and for purely jump Dirichlet forms
(cf. \cite[Propositions $7.3$ and $7.4$]{ChenKumagaiWang.2016.Ae}). Here we
generalize this result to the case when the scaling function $w(x,\cdot )$
may depend on point $x$ and the metric space may be bounded or unbounded.

Our Theorem \ref{thm:M1} is an extension of a similar result in \cite[%
Theorem 3.1]{ChenKumagaiWang.2016.EH} in the sense that, instead of assuming
condition $(\mathrm{TJ})$ in this paper, the following stronger hypothesis
than condition $(\mathrm{TJ})$ was assumed in \cite{ChenKumagaiWang.2016.EH}%
: the jump kernel $J(x,y)$ exists and satisfies the following \emph{pointwise%
} upper estimate 
\begin{equation*}
J(x,y)\leq \frac{C}{V(x,d(x,y))w(x,d(x,y))}
\end{equation*}%
for $\mu \times \mu $-almost all $(x,y)$ in $M\times M\setminus \mathrm{diag}
$. Also the metric space $(M,d)$ considered in \cite{ChenKumagaiWang.2016.EH}
is assumed to be unbounded. We emphasize that we do not assume the jump
kernel $J(x,y)$ exists, neither the boundedness of the metric space.

Liu and Murugan \cite[Theorem 1.2]{LiuMurugan2020} show that the parabolic
Harnack inequality implies the existence of the jump kernel $J(x,y)$ for a
pure jump regular Dirichlet form. A natural question arises whether the weak
elliptic Harnack inequality also implies the existence of the jump kernel.
The answer is negative. In fact, the paper \cite[Section 15]%
{BendikovGrigoryanHu18} has given an example on the ultra-metric space where
the jump measure satisfies both conditions $(\mathrm{PI})$ and $(\mathrm{TJ}%
) $ (noting that condition $(\mathrm{Gcap})$ automatically holds since it
follows directly from condition $(\mathrm{TJ})$ and the ultra-metric
property), but the jump kernel does not exist. By Theorem \ref{thm:M1}
above, the weak elliptic Harnack inequality is true, however, the jump
kernel does not exist in this case. We will give the details in Section \ref%
{sect-1}.

Let us explain the idea of proving the weak elliptic Harnack inequality in
Theorem \ref{thm:M1}. The proof essentially consists of the following two
steps (under the case when $f\equiv 0$).

\begin{enumerate}
\item To obtain the so-called \emph{measure-to-point lemma} as follows: for
some $\varepsilon \in (0,1)$ and for any non-negative superharmonic function 
$u$ in a ball $B$, there exists a constant $\eta >0$, depending only on $%
\varepsilon $ but independent of the ball $B$ and the function $u$, such
that 
\begin{equation}
\frac{\mu (B\cap \{u>1\})}{\mu (B)}\geq \varepsilon \text{ \ }\Rightarrow 
\text{ \ }\inf_{\frac{1}{2}B}u\geq \eta .  \label{eq:vol_20}
\end{equation}

\item To obtain the so-called \emph{crossover lemma} as follows: there exist
three universal numbers $p,\delta $ in $(0,1)$ and $C>0$ such that, for any
non-negative superharmonic function $u$ in a ball $B_{R}$, any concentric
ball $B_{r}$ of $B_{R}$ with $0<r\leq \delta R$ and for any positive number $%
\lambda \geq w(B_{R})T_{\frac{3}{4}B_{R},B_{R}}(u_{-})$, 
\begin{equation}
\left( \fint_{B_{r}}(u+\lambda )^{p}d\mu \right) ^{1/p}\left( \fint%
_{B_{r}}(u+\lambda )^{-p}d\mu \right) ^{1/p}\leq C.  \label{61}
\end{equation}
\end{enumerate}

\noindent The implication (\ref{eq:vol_20}) says that, if the occupation 
\emph{measure} of a superlevel set 
\begin{equation*}
\{u\geq a\}\text{ \ for }a>0
\end{equation*}%
in a ball $B$ for a function $u$, which is non-negative, superharmonic in $B$%
, is bounded from below by a constant $\varepsilon $, then the function $u$
should be also bounded from below by a positive number $\eta a$ at almost
all \emph{points} near the center.

The measure-to-point lemma is essentially the same as the \emph{Lemma of
growth} introduced by Landis in \cite{Landis.1963.UMN3}, \cite%
{Landis.1998.203} in studying solutions of elliptic second order PDEs (local
Dirichlet form) in $\mathbb{R}^{n}$. This Lemma of growth has been
reformulated and extended to the case for pure jump type (non-local)
Dirichlet forms on the metric measure space in \cite[Lemma 4.1]%
{GrigoryanHuHu.2018.AM433}, see also a forthcoming paper \cite{ghh21TP} for
mixed (either local or non-local) Dirichlet forms defined by (\ref{eq:DF-LJ}%
) without killing part (cf. Lemma \ref{lemma:LG} below). An alternative
version of Lemma of growth for pure jump type (non-local) Dirichlet forms on
metric space was stated in \cite[Proposition 3.6]{ChenKumagaiWang.2016.EH}.

We remark that the measure-to-point lemma is originated from the work by
Moser \cite[Theorem 2]{Moser.1960.CPAM457}, and developed by Krylov and
Safonov \cite{KrylovSafonov.1979.DANS18}, \cite{KRYLOVSAFONOV.1980.IANS161}, 
\cite{Safonov.1983.ZNS272}. The reader may consult the reference \cite[%
Section 3]{MaldonadoDiego.2017.PAMS3981} for the classical case.

Once the measure-to-point lemma has been established, one needs further to
show the crossover lemma (\ref{61}), where the Poincar\'{e} inequality comes
into a stage. To achieve (\ref{61}), one needs to show that, for any $u\in 
\mathcal{F^{\prime }}\cap L^{\infty }$ that is superharmonic and
non-negative in a ball $B$ and for any positive number $\lambda $ bounded
from below by a tail (in the case of local Dirichlet forms, any number $%
\lambda >0$ will be fine), the logarithm function 
\begin{equation*}
\ln ({u+\lambda )}
\end{equation*}%
belongs to the space $\mathrm{BMO}(\delta B)$, for some number $\delta \in
(0,1)$ that is independent of $u,\lambda $ and ball $B$. After that, the
rest of the proof is standard: one makes use of Lemma \ref{cor:JN} in
Appendix for an exponential function%
\begin{equation*}
\exp \left( \frac{c}{b}g\right) \text{ \ for any }b\geq ||g||_{\mathrm{BMO}}
\end{equation*}%
for $g:=\ln ({u+\lambda )}$, which is valid from the John-Nirenberg
inequality (see Lemma \ref{lemma:JN} in Appendix), and we are eventually led
to the desired crossover lemma (\ref{61})\ (see Lemma \ref{thm:M4} below).

Besides the version of the weak elliptic Harnack inequality stated in
Definition \ref{df:wehi1}, there are several other versions in the
literature, see for example \cite[Proposition 3.6]{ChenKumagaiWang.2016.EH}, 
\cite[Lemma 7.2]{GrigoryanHuLau.2015.JMSJ1485}, \cite[Lemma 4.5]%
{GrigoryanHuHu.2018.AM433}. We list all of them in Section \ref{sec:wEHI}
and term as conditions $(\mathrm{wEH}1)$, $(\mathrm{wEH}2)$, $(\mathrm{wEH}%
3) $, $(\mathrm{wEH}4)$. We shall show that the first three conditions $(%
\mathrm{wEH}1)$, $(\mathrm{wEH}2)$, $(\mathrm{wEH}3)$ are equivalent one
another, each of which implies condition $(\mathrm{wEH}4)$.

\begin{theorem}
\label{thm:M10}Let $(\mathcal{E},\mathcal{F})$ be a Dirichlet form in $%
L^{2}(M,\mu )$. If condition $(\mathrm{VD})$ holds, then 
\begin{eqnarray}
(\mathrm{wEH}) &\Leftrightarrow &(\mathrm{wEH}1)\Leftrightarrow (\mathrm{wEH}%
2)\Leftrightarrow (\mathrm{wEH}3)  \label{56} \\
&\Rightarrow &(\mathrm{wEH}4).  \label{57}
\end{eqnarray}
\end{theorem}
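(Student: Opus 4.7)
The plan is to establish a cyclic chain $(\mathrm{wEH}) \Rightarrow (\mathrm{wEH}1) \Rightarrow (\mathrm{wEH}2) \Rightarrow (\mathrm{wEH}3) \Rightarrow (\mathrm{wEH})$ together with the one-way implication $(\mathrm{wEH}) \Rightarrow (\mathrm{wEH}4)$. The four formulations are all expressions of the same underlying phenomenon -- that a non-negative superharmonic function on a ball cannot be small on a set of large measure without being small pointwise on a smaller concentric ball -- but they quantify this differently (via an $L^{p}$-mean, via a weak-type distributional bound, and via an exponential/logarithmic tail of a superlevel set as in (\ref{12-3})). The natural bridge in one direction is Chebyshev's inequality, and the natural bridge in the other direction is layer-cake integration of the distribution function. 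Condition $(\mathrm{VD})$ is needed only to absorb comparisons of $\mu$-volumes of concentric balls of comparable radii and to iterate a one-step estimate across dyadic scales.

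For $(\mathrm{wEH}) \Rightarrow (\mathrm{wEH}1)$, I would apply Chebyshev to (\ref{eq:vol_65}): setting
\begin{equation*}
N := \einf_{B_{r}} u + w(B_{r})\bigl( T_{\tfrac{3}{4}B_{R},B_{R}}(u_{-}) + \|f\|_{L^{\infty}(B_{R})}\bigr),
\end{equation*}
one gets for every $a>0$
\begin{equation*}
\frac{\mu(B_{r}\cap\{u>a\})}{\mu(B_{r})} \leq a^{-p}\fint_{B_{r}} u^{p}\,d\mu \leq (C_{H}N/a)^{p},
\end{equation*}
which is the weak-type form expected for $(\mathrm{wEH}1)$. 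For $(\mathrm{wEH}1) \Rightarrow (\mathrm{wEH}2)$, I would iterate across geometric levels: the weak-type bound, applied on concentric balls whose radii are comparable by $(\mathrm{VD})$, says that once $\mu(B \cap \{u \geq a\})/\mu(B)$ exceeds a universal threshold, the essential infimum of $u$ on a suitably smaller concentric ball must be at least $\eta a$ for a universal $\eta \in (0,1)$; chaining this at levels $a, \eta a, \eta^{2}a, \dots$ produces the exponential lower bound of the form (\ref{12-3}), which is $(\mathrm{wEH}2)$. The remaining step $(\mathrm{wEH}2) \Leftrightarrow (\mathrm{wEH}3)$ should be a direct reformulation (the difference typically lies only in the choice of inner ball radius or in whether the tail is written as $T_{\frac{3}{4}B_{R},B_{R}}(u_{-})$ versus $w(B_{r})$ times it). To close the cycle by $(\mathrm{wEH}3) \Rightarrow (\mathrm{wEH})$, I would use the layer-cake identity
\begin{equation*}
\fint_{B_{r}}(u+\lambda)^{p}\,d\mu = p\int_{0}^{\infty} a^{p-1}\,\frac{\mu(B_{r}\cap\{u+\lambda>a\})}{\mu(B_{r})}\,da,
\end{equation*}
with $\lambda$ at least the tail, split the integral at $N$, and use the exponential estimate to dominate the tail of the integrand; for any sufficiently small $p \in (0,1)$ the integral converges to a universal constant times $N^{p}$.

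The direct implication $(\mathrm{wEH}) \Rightarrow (\mathrm{wEH}4)$ should follow by specialization: $(\mathrm{wEH}4)$ is the weakest form (say, restricted to $f\equiv 0$ and a fixed radius ratio), and is obtained from $(\mathrm{wEH})$ by trivial substitution together with doubling of $w$ and $V$. The main obstacle in executing this plan is not any single analytic estimate but the \emph{bookkeeping of the tail term} $T_{\frac{3}{4}B_{R},B_{R}}(u_{-})$: each passage between the four formulations shifts the reference annulus or the inner radius slightly, and rescales by factors involving $w(B_{r})/w(B_{R})$ or $V(x_{0},r)/V(x_{0},R)$; one must verify that $(\mathrm{VD})$ together with the two-sided doubling of $w$ from (\ref{eq:vol_0}) suffices to absorb all such factors into universal constants, without recourse to $(\mathrm{PI})$, $(\mathrm{Gcap})$ or $(\mathrm{TJ})$. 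A secondary technical point is that the exponent $p$ in $(\mathrm{wEH})$ may need to be chosen smaller when passing from $(\mathrm{wEH}3)$ back to $(\mathrm{wEH})$, so one should record that the implications $(\mathrm{wEH}i) \Rightarrow (\mathrm{wEH})$ hold with some $p$ depending only on the constants in $(\mathrm{wEH}i)$ and on $C_{\mu}$ from $(\mathrm{VD})$.
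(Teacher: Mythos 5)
Your overall architecture (a cycle of implications, Chebyshev in the ``easy'' direction, a layer-cake integration in the return direction, with $(\mathrm{VD})$ and (\ref{eq:vol_0}) absorbing the changes of radius) is the same as the paper's, and your easy steps are fine: the Chebyshev computation does give $(\mathrm{wEH})\Rightarrow(\mathrm{wEH}1)$ (note, though, that $(\mathrm{wEH}1)$ is not a weak-type distributional bound but a measure-to-point implication: occupation measure $\geq\eta_{1}$ forces $\einf_{B_{4r}}u>\varepsilon_{1}a-\mathrm{tail}$), and $(\mathrm{wEH}2)\Rightarrow(\mathrm{wEH}3)$ and the specialization to $(\mathrm{wEH}4)$ are indeed routine. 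The paper also gets $(\mathrm{wEH})\Rightarrow(\mathrm{wEH}2)$ cheaply from the elementary inequality $\ln x\geq 1-1/x$, so that its cycle is $(\mathrm{wEH})\Rightarrow(\mathrm{wEH}2)\Rightarrow(\mathrm{wEH}3)\Rightarrow(\mathrm{wEH}1)\Rightarrow(\mathrm{wEH})$, with exactly one hard step.

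The genuine gap is in that hard step, which in your arrangement appears twice (in $(\mathrm{wEH}1)\Rightarrow(\mathrm{wEH}2)$ and in $(\mathrm{wEH}3)\Rightarrow(\mathrm{wEH})$): passing from a fixed-threshold, measure-to-point statement back to a quantitative bound requires the Krylov--Safonov covering lemma on doubling spaces, which you have not identified. The conditions $(\mathrm{wEH}1)$ and $(\mathrm{wEH}3)$ only assert that for each occupation fraction $\eta$ there is \emph{some} constant $\varepsilon_{1}(\eta)$ (resp.\ $F(\eta)$) with no controlled dependence on $\eta$; applying them directly with $\eta=\omega_{B_{r}}(\{u\geq a\})$ gives nothing usable, and ``chaining at levels $a,\eta a,\eta^{2}a,\dots$'' does not by itself make the superlevel sets grow in measure. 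The covering lemma is precisely what converts local density of $A_{t}^{i-1}=\{u>t\varepsilon^{i-1}-T/(1-\varepsilon)\}$ into the dichotomy $A_{t}^{i-1}=B_{r}$ or $\mu(A_{t}^{i})\geq\eta^{-1}\mu(A_{t}^{i-1})$; iterating $j\approx\log_{\eta}\omega$ times yields the power-law decay $\omega_{B_{r}}(A_{t}^{0})\leq c_{3}t^{-\gamma}(\einf_{B_{4r}}u+T/(1-\varepsilon))^{\gamma}$ with $\gamma=\log_{\varepsilon}\eta$, and only then does your layer-cake integral converge for $p<\gamma$. Without this input, your proposed closures of the cycle do not go through, and the secondary point you flag (shrinking $p$) is a symptom of the same issue: the admissible $p$ is dictated by the exponent $\gamma$ produced by the covering argument.
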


We will prove Theorem \ref{thm:M10} at the end of Section \ref{sec:wEHI}.

\section[FK]{Faber-Krahn inequality and Dirichlet heat kernel}

\label{sec:FK}In this section, we show that for a regular Dirichlet form
without killing part on a metric space, if the measure satisfies conditions $%
(\mathrm{VD})$ and $(\mathrm{RVD})$, then the Poincar\'{e} inequality
implies the Faber-Krahn inequality. Although this conclusion is known to the
expert, there is no a direct proof in the literature, and we will give a
self-contained proof for convenience. Here we do not assume the existence of
the jump kernel, neither the independence of point for the scaling function $%
w$. Our result can be viewed as an extension of the previous work \cite[%
Theorem 5.1]{GrigoryanHuLau.2015.JMSJ1485} for a local Dirichlet form for
the doubling measure, and \cite[Lemmas 5.2, 5.3]{BendikovGrigoryanHu18} for
a non-local Dirichlet form for the Ahlfors-regular measure. See also \cite[%
Proposition 3.4.1]{BoutayebCoulhonSikora.2015.AM302}. As a by-product, we
derive that the Dirichlet heat kernel $p_{t}^{B}(x,y)$ exists and satisfies
an upper bound, for any ball $B$ of radius less than $\sigma \overline{R}$.

We introduce condition $(\mathrm{Nash}_{B})$, which is the \emph{Nash
inequality on a ball }$B$.

\begin{definition}[condition $(\mathrm{Nash}_{B})$]
We say that condition $(\mathrm{Nash}_{B})$ holds if there exist three
positive constants $\sigma \in (0,1)$ and $\nu ,C$ such that for any metric
ball $B$ of radius $r\in (0,\sigma \overline{R})$ and any $u\in \mathcal{F}%
(B)$, 
\begin{equation}
||u||_{2}^{2+2\nu }\leq \frac{C}{\mu (B)^{\nu }}||u||_{1}^{2\nu }\left(
||u||_{2}^{2}+w(B)\mathcal{E}(u,u)\right) .  \label{eq:vol_6}
\end{equation}%
We remark that constants $C$ and $\nu ,\sigma $ are all independent of ball $%
B$ and function $u$.
\end{definition}

We show that the Poincar\'{e} inequality implies the Nash inequality on a
ball.

\begin{lemma}
\label{thm:M2}Assume that $(\mathcal{E},\mathcal{F})$ is a regular Dirichlet
form in $L^{2}(M,\mu )$ without killing part. If conditions $(\mathrm{VD})$
and $(\mathrm{PI})$ are satisfied, then condition $(\mathrm{Nash}_{B})$
holds, that is, 
\begin{equation*}
(\mathrm{VD})+(\mathrm{PI})\Rightarrow (\mathrm{Nash}_{B}).
\end{equation*}
\end{lemma}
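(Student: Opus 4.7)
The strategy is to adapt Saloff-Coste's classical derivation of a Sobolev inequality from a $(2,2)$-Poincar\'{e} inequality plus doubling, then interpolate between $L^1$ and $L^{2\kappa}$ to obtain $(\mathrm{Nash}_B)$.

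First, for $u \in \mathcal{F}(B) \cap L^\infty$ with $B = B(x_0, r)$ and $r < \sigma \overline{R}$, apply $(\mathrm{PI})$ on $B$: since $u \in \mathcal{F}(B) \subset \mathcal{F}' \cap L^\infty$,
$$\int_B (u - u_B)^2 \, d\mu \leq C w(B)\,\mathcal{E}(u,u),$$
where the right-hand side uses that the $\kappa B$-energy is dominated by the total $\mathcal{E}(u,u)$. Since $u \equiv 0$ off $B$, the mean satisfies $|u_B|^2 \leq \mu(B)^{-2}||u||_1^2$, and expanding $\int_B(u-u_B)^2\,d\mu = ||u||_2^2 - \mu(B)u_B^2$ yields the reduced Poincar\'{e} inequality
$$||u||_2^2 \leq C_1\,w(B)\mathcal{E}(u,u) + C_2\,\mu(B)^{-1}||u||_1^2. \qquad (\ast)$$
By density in the form norm, $(\ast)$ extends to all $u \in \mathcal{F}(B)$.

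Next, $(\ast)$ corresponds only to the ``$\nu = 0$'' case of $(\mathrm{Nash}_B)$ and is by itself too weak. To generate a positive gain $\nu > 0$, one applies $(\mathrm{PI})$ at many scales via a Whitney-type covering of $B$ by sub-balls $\{B_i\}$ for which $\{\kappa B_i\}$ has bounded multiplicity (thanks to $(\mathrm{VD})$). Chaining the averages $u_{B_i}$ along paths of overlapping sub-balls and summing the $(\mathrm{PI})$ contributions produces, for some $\kappa > 1$ and all $u \in \mathcal{F}(B)$, the Sobolev-type estimate
$$||u||_{2\kappa}^2 \leq \frac{C}{\mu(B)^{(\kappa-1)/\kappa}}\bigl(||u||_2^2 + w(B)\mathcal{E}(u,u)\bigr).$$
Here (\ref{eq:vol_0}) absorbs the $x$-dependence of $w$, while the bounded multiplicity of $\{\kappa B_i\}$ controls the non-local jump contributions.

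Finally, interpolate $L^2$ between $L^1$ and $L^{2\kappa}$: by H\"{o}lder, $||u||_2 \leq ||u||_1^{\theta}||u||_{2\kappa}^{1-\theta}$ with $\theta = (\kappa-1)/(2\kappa-1)$. Raising to the power $2/(1-\theta)$ and inserting the Sobolev estimate gives
$$||u||_2^{2+2\nu} \leq \frac{C}{\mu(B)^\nu}||u||_1^{2\nu}\bigl(||u||_2^2 + w(B)\mathcal{E}(u,u)\bigr), \qquad \nu = \frac{\kappa-1}{2\kappa-1} > 0,$$
which is precisely $(\mathrm{Nash}_B)$. The main obstacle is the chaining step upgrading $(\ast)$ to the Sobolev estimate: the $x$-dependent scaling $w(x,r)$ forces careful bookkeeping of constants across the covering, and the non-local jump contribution to $\mathcal{E}$ prevents the naive ``sum of local energies equals total energy'' identity. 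Both difficulties are handled using (\ref{eq:vol_0}) and the bounded multiplicity of $\{\kappa B_i\}$ coming from $(\mathrm{VD})$.
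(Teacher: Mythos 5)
Your overall architecture (reduce to a Sobolev-type inequality on $B$, then interpolate $L^{2}$ between $L^{1}$ and $L^{2\kappa}$) is a legitimate classical route to Nash, and your first step $(\ast)$ and the final interpolation are fine in principle (modulo a small arithmetic slip: the exponent you get is $\nu=\theta/(1-\theta)=(\kappa-1)/\kappa$, not $\theta$ itself). The genuine gap is the middle step. You obtain the Sobolev inequality
\begin{equation*}
\Vert u\Vert _{2\kappa }^{2}\leq \frac{C}{\mu (B)^{(\kappa -1)/\kappa }}
\left( \Vert u\Vert _{2}^{2}+w(B)\mathcal{E}(u,u)\right)
\end{equation*}
by ``chaining the averages $u_{B_{i}}$ along paths of overlapping sub-balls.'' That chaining argument requires a chain condition on the metric space: any two sub-balls must be joinable by a finite chain of balls in which consecutive members overlap substantially. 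This holds on length spaces but is not available in the generality of this paper, where $(M,d)$ is only assumed locally compact and separable. In particular, in an ultrametric space any two distinct balls of the same radius are disjoint, so no nontrivial chains of overlapping balls exist at all; yet the ultrametric example of Section 7 is exactly a case where the lemma must apply (it is used there to verify the hypotheses of Theorem \ref{thm:M1}). So the step upgrading $(\ast)$ to a Sobolev inequality with a genuine gain in integrability, which you correctly identify as the crux, is not justified and would fail on the paper's own example.

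The paper avoids a Sobolev inequality entirely. Following Saloff-Coste's mollification argument, it introduces the ball average $u_{s}(x)=V(x,s)^{-1}\int_{B(x,s)}u\,d\mu$ and proves two bounds: $\Vert u_{s}\Vert _{2}^{2}\leq C\left( r/s\right) ^{d_{2}}\Vert u\Vert _{1}^{2}/V(x_{0},r)$ (pure $(\mathrm{VD})$, via $\Vert u_{s}\Vert _{2}^{2}\leq \Vert u_{s}\Vert _{\infty }\Vert u_{s}\Vert _{1}$), and $\Vert u-u_{s}\Vert _{2}^{2}\leq C(s/r)^{\beta _{1}}w(x_{0},r)\mathcal{E}(u)$, obtained by applying $(\mathrm{PI})$ on a bounded-overlap covering of $M$ by balls of radius $s$ and simply \emph{summing} over the covering --- no chaining between the pieces is needed, because the quantity $\Vert u-u_{s}\Vert _{2}^{2}$ is controlled locally on each covering ball. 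Optimizing over the scale $s\in (0,r]$ then yields $(\mathrm{Nash}_{B})$ with $\nu =\beta _{1}/d_{2}$. This is what makes the proof work on disconnected (e.g.\ ultrametric) spaces and for non-local forms without a jump kernel. To repair your argument you would either have to add a chain condition as a hypothesis (which the lemma does not have) or replace the chaining step by the mollification step.
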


\begin{proof}
Since the proof is quite long, we divided into two steps.

\emph{Step} $1$. We show that there exists a constant $C>0$ such that for
all $s>0$ and all $u\in \mathcal{F}\cap L^{1}$ with $||u||_{1}>0$ 
\begin{equation}
||u_{s}||_{2}^{2}\leq \frac{C||u||_{1}^{2}}{\underset{z\in \mathrm{supp\,}(u)%
}{\inf }V(z,s)},  \label{eq:vol_10}
\end{equation}%
where $u_{s}(x)$ is the average of function $u$ over a ball $B(x,s)$, that
is, 
\begin{equation*}
u_{s}(x)=\frac{1}{V(x,s)}\int_{B(x,s)}u(z)\mu (dz)\ \text{\ for}\ x\in M,s>0.
\end{equation*}%
The proof is motivated by \cite[Theorem 2.4]{Saloff-Coste.1992.IMRN27}. At
this step, we do not need condition $(\mathrm{PI})$. To this end, let $%
||u||_{1}>0$, and denote by 
\begin{equation*}
A_{s}:=\{x\in M:d(x,\mathrm{supp\,}(u))<s\},
\end{equation*}%
the $s$-neighborhood of the support of $u$. Clearly, we see that $%
u_{s}(x)\equiv 0$ when $x$ lies outside the set $A_{s}$, since $u(z)=0$ for $%
z\in B(x,s)\subset M\setminus \mathrm{supp\,}(u)$. It follows that 
\begin{equation}
||u_{s}||_{\infty }\leq \frac{||u||_{1}}{\underset{x\in A_{s}}{\inf }V(x,s)}%
\leq \frac{2^{d_{2}}C_{\mu }||u||_{1}}{\underset{x\in \mathrm{supp\,}(u)}{%
\inf }V(x,s)},  \label{eq:vol_8}
\end{equation}%
where we have used the fact that for any $x\in A_{s}$, 
\begin{equation*}
\frac{1}{\underset{x\in A_{s}}{\inf }V(x,s)}\leq \frac{2^{d_{2}}C_{\mu }}{%
\underset{x\in \mathrm{supp\,}(u)}{\inf }V(x,s)},
\end{equation*}%
since there exists a point $z\in \mathrm{supp\,}(u)$ such that $d(z,x)<s$,
and thus by (\ref{eq:vol_3}) 
\begin{equation}
\frac{V(z,s)}{V(x,s)}\leq C_{\mu }\left( \frac{d(z,x)+s}{s}\right)
^{d_{2}}\leq C_{\mu }\left( \frac{s+s}{s}\right) ^{d_{2}}=2^{d_{2}}C_{\mu },
\label{13}
\end{equation}%
from which,%
\begin{equation*}
\frac{1}{\underset{x\in A_{s}}{\inf }V(x,s)}=\underset{x\in A_{s}}{\sup }%
\frac{1}{V(x,s)}\leq 2^{d_{2}}C_{\mu }\underset{z\in \mathrm{supp\,}(u)}{%
\sup }\frac{1}{V(z,s)}=\frac{2^{d_{2}}C_{\mu }}{\underset{x\in \mathrm{supp\,%
}(u)}{\inf }V(x,s)}.
\end{equation*}

On the other hand, 
\begin{align}
||u_{s}||_{1}& \leq \int_{A_{s}}\frac{1}{V(x,s)}\left(
\int_{B(x,s)}|u(z)|\mu (dz)\right) \mu (dx)  \notag \\
& =\int_{A_{s}}\frac{1}{V(x,s)}\left( \int_{\mathrm{supp\,}%
(u)}|u(z)|1_{B(x,s)}(z)\mu (dz)\right) \mu (dx)  \notag \\
& =\int_{\mathrm{supp\,}(u)}|u(z)|\left( \int_{A_{s}}\frac{1_{B(x,s)}(z)}{%
V(x,s)}\mu (dx)\right) \mu (dz)  \notag \\
& =\int_{\mathrm{supp\,}(u)}|u(z)|\left( \int_{A_{s}\cap B(z,s)}\frac{1}{%
V(x,s)}\mu (dx)\right) \mu (dz)  \notag \\
& \leq \int_{\mathrm{supp\,}(u)}|u(z)|\frac{V(z,s)}{\underset{x\in B(z,s)}{%
\inf }V(x,s)}\mu (dz)  \notag \\
& =\int_{\mathrm{supp\,}(u)}|u(z)|\underset{x\in B(z,s)}{\sup }\frac{V(z,s)}{%
V(x,s)}\mu (dz)\leq 2^{d_{2}}C_{\mu }||u||_{1},  \label{eq:vol_76}
\end{align}%
since for any $z\in \mathrm{supp\,}(u)$ and any $x\in B(z,s)$, 
\begin{equation*}
\frac{V(z,s)}{V(x,s)}\leq 2^{d_{2}}C_{\mu }
\end{equation*}%
by virtue of (\ref{13}). Therefore, it follows from (\ref{eq:vol_8}), (\ref%
{eq:vol_76}) that 
\begin{equation*}
||u_{s}||_{2}^{2}\leq ||u_{s}||_{\infty }||u_{s}||_{1}\leq \frac{%
(2^{d_{2}}C_{\mu })^{2}||u||_{1}^{2}}{\underset{z\in \mathrm{supp\,}(u)}{%
\inf }V(z,s)},
\end{equation*}%
thus showing (\ref{eq:vol_10}) with $C:=(2^{d_{2}}C_{\mu })^{2}$.

\emph{Step} $2$. We show that condition $(\mathrm{Nash}_{B})$ holds. We
assume that condition $(\mathrm{PI})$ holds.

Fix a ball $B:=B(x_{0},r)$ with $r\in (0,\frac{\overline{R}}{\kappa })$,
where constant $\kappa $ is the same as in condition $(\mathrm{PI})$. Let $%
s\in (0,\frac{\overline{R}}{2\kappa })$ be a number to be determined later
on, and fix a function $u\in \mathcal{F}(B)\cap L^{1}(M,\mu )$. Since $M$ is
separable, there is a countable family of points $\{y_{i}\}_{i=1}^{\infty }$
such that $M\subset \bigcup_{i=1}^{\infty }B(y_{i},s)$. By the doubling
property, we can find a subsequence $\{x_{i}\}_{i=1}^{\infty }\subset
\{y_{i}\}_{i=1}^{\infty }$ such that $M=\bigcup_{i=1}^{\infty }B_{i}$ with $%
B_{i}:=B(x_{i},s)$, and $\{\frac{1}{5}B_{i}\}_{i=1}^{\infty }$ are pairwise
disjoint (see \cite[Theorem 1.16]{Heinonen.2001.140}). The over-lapping
number $\sum_{i=1}^{\infty }1_{2\kappa B_{i}}$ is bounded by some integer $%
N_{0}$ depending only on $\kappa $ and $C_{\mu }$, that is, $%
\sum_{i=1}^{\infty }1_{2\kappa B_{i}}\leq N_{0}$. From this, we have for any
measurable function $g\geq 0$, 
\begin{align}
\sum_{i=1}^{\infty }\iint_{(2\kappa B_{i})\times M}g(x,y)J(dx,dy)&
=\iint_{M\times M}g(x,y)\sum_{i=1}^{\infty }1_{2\kappa B_{i}}(x)J(dx,dy) 
\notag \\
& \leq N_{0}\iint_{M\times M}g(x,y)J(dx,dy).  \label{eq:vol_9}
\end{align}%
We estimate the term $||u-u_{s}||_{2}^{2}$ by 
\begin{align}
||u-u_{s}||_{2}^{2}& \leq \sum_{i=1}^{\infty
}\int_{B_{i}}|u(x)-u_{s}(x)|^{2}\mu (dx)  \notag \\
& \leq 2\sum_{i=1}^{\infty }\left(
\int_{B_{i}}(|u(x)-u_{2B_{i}}|^{2}+|u_{2B_{i}}-u_{s}(x)|^{2})\mu (dx)\right) %
\eqqcolon2(I_{1}+I_{2}).  \label{eq:vol_11}
\end{align}%
For $I_{1}$, we have by condition $(\mathrm{PI})$, 
\begin{align}
I_{1}& =\sum_{i=1}^{\infty }\int_{B_{i}}|u(x)-u_{2B_{i}}|^{2}\mu (dx)\leq
\sum_{i=1}^{\infty }\int_{2B_{i}}|u(x)-u_{2B_{i}}|^{2}\mu (dx)  \notag \\
& \leq C\sum_{i=1}^{\infty }w(x_{i},2s)\left\{ \int_{2\kappa B_{i}}d\Gamma
^{(L)}\langle u\rangle +\iint_{(2\kappa B_{i})\times (2\kappa
B_{i})}(u(x)-u(y))^{2}J(dx,dy)\right\} .  \label{eq:vol_12}
\end{align}%
For $I_{2}$, note that for any $x\in B_{i}=B(x_{i},s)$, the function $%
1_{B(x,s)}(z)=0$ when $z\in (2B_{i})^{c}\subset B(x,s)^{c}$. Using the
Cauchy-Schwarz inequality and condition $(\mathrm{PI})$, we have for any $%
x\in B_{i}$ 
\begin{align}
|u_{s}(x)-u_{2B_{i}}|^{2}& =\Big|\int_{M}\frac{1_{B(x,s)}(z)}{V(x,s)}%
(u(z)-u_{2B_{i}})\mu (dz)\Big|^{2}\leq \int_{M}\frac{1_{B(x,s)}(z)}{V(x,s)}%
|u(z)-u_{2B_{i}}|^{2}\mu (dz)  \notag \\
& \leq \int_{2B_{i}}\frac{1}{V(x,s)}|u(z)-u_{2B_{i}}|^{2}\mu (dz)\leq \frac{%
2^{d_{2}}C_{\mu }}{V(x_{i},s)}\int_{2B_{i}}|u(z)-u_{2B_{i}}|^{2}\mu (dz) 
\notag \\
& \leq \frac{Cw(x_{i},2s)}{V(x_{i},s)}\left\{ \int_{2\kappa B_{i}}d\Gamma
^{(L)}\langle u\rangle +\iint_{(2\kappa B_{i})\times (2\kappa
B_{i})}(u(x)-u(y))^{2}J(dx,dy)\right\} ,  \label{eq:vol_73}
\end{align}%
where we have used the fact that for any $x\in B_{i}$, 
\begin{equation*}
\frac{V(x_{i},s)}{V(x,s)}\leq C_{\mu }\left( \frac{d(x_{i},x)+s}{s}\right)
^{d_{2}}\leq 2^{d_{2}}C_{\mu }
\end{equation*}%
by virtue of (\ref{eq:vol_3}). Therefore, it follows that 
\begin{align}
I_{2}& =\sum_{i=1}^{\infty }\int_{B_{i}}(u_{2B_{i}}-u_{s}(x))^{2}\mu (dx) 
\notag \\
& \leq \sum_{i=1}^{\infty }\int_{B_{i}}\frac{Cw(x_{i},2s)}{V(x_{i},s)}%
\left\{ \int_{2\kappa B_{i}}d\Gamma ^{(L)}\langle u\rangle +\iint_{(2\kappa
B_{i})\times (2\kappa B_{i})}(u(x)-u(y))^{2}J(dx,dy)\right\} \mu (dx)  \notag
\\
& =C\sum_{i=1}^{\infty }w(x_{i},2s)\left\{ \int_{2\kappa B_{i}}d\Gamma
^{(L)}\langle u\rangle +\iint_{(2\kappa B_{i})\times (2\kappa
B_{i})}(u(x)-u(y))^{2}J(dx,dy)\right\} .  \label{eq:vol_13}
\end{align}%
Combining (\ref{eq:vol_12}) and (\ref{eq:vol_13}), we conclude from (\ref%
{eq:vol_11}) that 
\begin{align}
||u-u_{s}||_{2}^{2}& \leq 2(I_{1}+I_{2})  \notag \\
& \leq C\sum_{i=1}^{\infty }w(x_{i},2s)\left\{ \int_{2\kappa B_{i}}d\Gamma
^{(L)}\langle u\rangle +\iint_{(2\kappa B_{i})\times (2\kappa
B_{i})}(u(x)-u(y))^{2}J(dx,dy)\right\}   \label{eq:vol_74}
\end{align}%
for a positive constant $C$ depending only on the constants from condition $(%
\mathrm{VD})$ (independent of $u,s,B_{i}$).

Since $u\in \mathcal{F}(B)$, if $2\kappa B_{i}\subset B^{c}$, we see that $%
u(x)=u(y)=0$ when $x,y\in 2\kappa B_{i}$, thus $1_{2\kappa B_{i}}d\Gamma
^{(L)}\langle u\rangle =0$, and so the integral in the above summation
vanishes. In other words, the summation in (\ref{eq:vol_74}) is taken only
over the indices $i$ such that $(2\kappa B_{i})\cap B\neq \emptyset $. Set 
\begin{equation}
Q:=\sup_{i:(2\kappa B_{i})\cap B\neq \emptyset }w(x_{i},s).
\label{eq:vol_75}
\end{equation}%
Since $w(x_{i},2s)\leq C_{2}2^{\beta _{2}}w(x_{i},s)$ by using (\ref%
{eq:vol_0}), we obtain that 
\begin{align}
& \sum_{i=1}^{\infty }w(x_{i},s)\iint_{(2\kappa B_{i})\times (2\kappa
B_{i})}(u(x)-u(y))^{2}J(dx,dy)  \notag \\
& \leq Q\sum_{i=1}^{\infty }\iint_{(2\kappa B_{i})\times
M}(u(x)-u(y))^{2}J(dx,dy)  \notag \\
& \leq Q\cdot N_{0}\iint_{M\times M}(u(x)-u(y))^{2}J(dx,dy)\text{ \ (using (%
\ref{eq:vol_9}))}  \notag \\
& =N_{0}Q\mathcal{E}^{(J)}(u,u).  \label{eq:vol_14}
\end{align}%
On the other hand, 
\begin{align}
\sum_{i=1}^{\infty }w(x_{i},s)\int_{2\kappa B_{i}}d\Gamma ^{(L)}\langle
u\rangle & \leq Q\sum_{i=1}^{\infty }\int_{2\kappa B_{i}}d\Gamma
^{(L)}\langle u\rangle =Q\int_{M}\sum_{i=1}^{\infty }1_{2\kappa
B_{i}}d\Gamma ^{(L)}\langle u\rangle  \notag \\
& \leq N_{0}Q\int_{M}d\Gamma ^{(L)}\langle u\rangle =N_{0}Q\mathcal{E}%
^{(L)}(u,u).  \label{eq:vol_17}
\end{align}%
Therefore, combining (\ref{eq:vol_14}) and (\ref{eq:vol_17}), we conclude
from (\ref{eq:vol_74}) that for all $s$ $\in (0,\frac{\overline{R}}{2\kappa }%
)$ 
\begin{equation}
||u-u_{s}||_{2}^{2}\leq C\left\{ N_{0}Q\mathcal{E}^{(L)}(u,u)+N_{0}Q\mathcal{%
E}^{(J)}(u,u)\right\} =CN_{0}Q\mathcal{E}(u).  \label{eq:vol_18}
\end{equation}%
It is left to estimate $Q$ for any $s\in (0,\frac{\overline{R}}{2\kappa })$.
We distinguish two cases when $s\leq r$ or not.

Indeed, let $z_{0}\in (2\kappa B_{i})\cap B$. By (\ref{eq:vol_0}), we have 
\begin{equation*}
\frac{w(x_{i},s)}{w(z_{0},s)}=\frac{w(x_{i},s)}{w(x_{i},2\kappa s)}\cdot 
\frac{w(x_{i},2\kappa s)}{w(z_{0},s)}\leq C_{1}^{-1}\left( \frac{s}{2\kappa s%
}\right) ^{\beta _{1}}\cdot C_{2}\left( \frac{2\kappa s}{s}\right) ^{\beta
_{2}}=c^{\prime }(\kappa ),
\end{equation*}%
whilst for $s\leq r$ 
\begin{equation*}
\frac{w(z_{0},s)}{w(x_{0},r)}\leq C_{1}^{-1}\left( \frac{s}{r}\right)
^{\beta _{1}}.
\end{equation*}%
Thus, 
\begin{equation*}
\frac{w(x_{i},s)}{w(x_{0},r)}=\frac{w(x_{i},s)}{w(z_{0},s)}\cdot \frac{%
w(z_{0},s)}{w(x_{0},r)}\leq c\left( \frac{s}{r}\right) ^{\beta _{1}}
\end{equation*}%
if $(2\kappa B_{i})\cap B\neq \emptyset $ and $s\leq r$. From this, we
obtain 
\begin{equation}
Q=\sup_{i:(2\kappa B_{i})\cap B\neq \emptyset }w(x_{i},s)\leq c^{\prime
}\left( \frac{s}{r}\right) ^{\beta _{1}}w(x_{0},r)\ \ \text{if}\ s\leq r.
\label{eq:vol_77}
\end{equation}%
Plugging (\ref{eq:vol_77}) into (\ref{eq:vol_18}), we have 
\begin{equation}
||u-u_{s}||_{2}^{2}\leq C^{\prime }\left( \frac{s}{r}\right) ^{\beta
_{1}}w(x_{0},r)\mathcal{E}(u)  \label{eq:vol_78}
\end{equation}%
if $s\leq r$. Note that if $s\leq r$, then for any $x\in \mathrm{supp\,}%
(u)\subset B(x_{0},r)$ 
\begin{equation*}
\frac{V(x_{0},r)}{V(x,s)}\leq C_{\mu }\left( \frac{d(x_{0},x)+r}{s}\right)
^{d_{2}}\leq 2^{d_{2}}C_{\mu }\left( \frac{r}{s}\right) ^{d_{2}},
\end{equation*}%
which gives by (\ref{eq:vol_10}) that 
\begin{equation}
||u_{s}||_{2}^{2}\leq \frac{C||u||_{1}^{2}}{\underset{x\in \mathrm{supp\,}(u)%
}{\inf }V(x,s)}\leq C\left( \frac{r}{s}\right) ^{d_{2}}\frac{||u||_{1}^{2}}{%
V(x_{0},r)}.  \label{eq:vol_79}
\end{equation}%
Therefore, we conclude from (\ref{eq:vol_78}), (\ref{eq:vol_79}) that for
all $0<s$ $<r\wedge \frac{\overline{R}}{2\kappa }$, 
\begin{align}
||u||_{2}^{2}& \leq 2\left( ||u-u_{s}||_{2}^{2}+||u_{s}||_{2}^{2}\right) 
\notag \\
& \leq C\left( \left( \frac{s}{r}\right) ^{\beta _{1}}w(x_{0},r)\mathcal{E}%
(u)+\left( \frac{r}{s}\right) ^{d_{2}}\frac{||u||_{1}^{2}}{V(x_{0},r)}%
\right) .  \label{eq:vol_80}
\end{align}%
On the other hand, if $r\leq s<\frac{\overline{R}}{2\kappa }$, it is clear
that 
\begin{equation}
||u||_{2}^{2}\leq \left( \frac{s}{r}\right) ^{\beta _{1}}||u||_{2}^{2}.
\label{eq:vol_81}
\end{equation}%
Summing up (\ref{eq:vol_80}) and (\ref{eq:vol_81}), we obtain for all $0<s$ $%
<\frac{\overline{R}}{2\kappa }$, 
\begin{align}
||u||_{2}^{2}& \leq C\left( \left( \frac{s}{r}\right) ^{\beta _{1}}\left(
w(x_{0},r)\mathcal{E}(u)+||u||_{2}^{2}\right) +\left( \frac{r}{s}\right)
^{d_{2}}\frac{||u||_{1}^{2}}{V(x_{0},r)}\right)  \notag \\
& \leq C2^{d_{2}}\left( \left( \frac{2s}{r}\right) ^{\beta _{1}}\left(
w(x_{0},r)\mathcal{E}(u)+||u||_{2}^{2}\right) +\left( \frac{r}{2s}\right)
^{d_{2}}\frac{||u||_{1}^{2}}{V(x_{0},r)}\right) .  \label{eq:vol_82}
\end{align}%
We minimize the right-hand side of (\ref{eq:vol_82}) in $s$ $\in (0,\frac{%
\overline{R}}{2\kappa })$, for example, by choosing $s$ such that 
\begin{equation*}
\left( \frac{2s}{r}\right) ^{\beta _{1}}\left( w(x_{0},r)\mathcal{E}%
(u)+||u||_{2}^{2}\right) =\left( \frac{r}{2s}\right) ^{d_{2}}\frac{%
||u||_{1}^{2}}{V(x_{0},r)},
\end{equation*}%
that is, 
\begin{equation}
s=\frac{r}{2}\left( \frac{||u||_{1}^{2}}{V(x_{0},r)\left( w(x_{0},r)\mathcal{%
E}(u)+||u||_{2}^{2}\right) }\right) ^{\frac{1}{\beta _{1}+d_{2}}}.
\label{14}
\end{equation}%
We postpone verifying that $s$ $\in (0,\frac{\overline{R}}{2\kappa })$.
Therefore, it follows that 
\begin{equation*}
||u||_{2}^{2}\leq C^{\prime }\left( \frac{||u||_{1}^{2}}{V(x_{0},r)}\right)
^{\frac{\beta _{1}}{\beta _{1}+d_{2}}}\left( ||u||_{2}^{2}+w(x_{0},r)%
\mathcal{E}(u)\right) ^{\frac{d_{2}}{\beta _{1}+d_{2}}},
\end{equation*}%
thus showing that 
\begin{equation*}
||u||_{2}^{2(1+\frac{\beta _{1}}{d_{2}})}\leq C\left(
||u||_{2}^{2}+w(x_{0},r)\mathcal{E}(u)\right) \left( \frac{||u||_{1}^{2}}{%
V(x_{0},r)}\right) ^{\frac{\beta _{1}}{d_{2}}},
\end{equation*}%
for all $u\in \mathcal{F}(B)\cap L^{1}$. Hence, condition $(\mathrm{Nash}%
_{B})$\ holds with $\sigma =\frac{1}{\kappa }$ and $\nu =\frac{\beta _{1}}{%
d_{2}}$.

It remains to verify that the number $s$ given by (\ref{14}) satisfies
condition $s\in (0,\frac{\overline{R}}{2\kappa })$. Indeed, by the
Cauchy-Schwarz inequality, we have for any $u\in \mathcal{F}(B)$, 
\begin{equation*}
||u||_{1}^{2}\leq V(x_{0},r)||u||_{2}^{2},
\end{equation*}%
from which, we see that, using the fact that $r\in (0,\overline{R}/\kappa )$%
, 
\begin{equation*}
s=\frac{r}{2}\left( \frac{||u||_{1}^{2}}{V(x_{0},r)\left( w(x_{0},r)\mathcal{%
E}(u)+||u||_{2}^{2}\right) }\right) ^{\frac{1}{\beta _{1}+d_{2}}}\leq \frac{r%
}{2}\left( \frac{||u||_{1}^{2}}{V(x_{0},r)||u||_{2}^{2}}\right) ^{\frac{1}{%
\beta _{1}+d_{2}}}\leq \frac{r}{2}<\frac{\overline{R}}{2\kappa }.
\end{equation*}%
The proof is complete.
\end{proof}

We derive the on-diagonal upper bound of the Dirichlet heat kernel on any
ball by using condition $(\mathrm{Nash}_{B})$. In particular, we derive the
Faber-Krahn inequality.

\begin{lemma}
\label{thm:M3}Let $(\mathcal{E},\mathcal{F})$ be a regular Dirichlet form in 
$L^{2}$. If conditions $(\mathrm{VD})$, $(\mathrm{RVD})$ and $(\mathrm{Nash}%
_{B})$ hold, then the Dirichlet heat kernel $p_{t}^{B}(x,y)$ exists and
satisfies 
\begin{equation}
\esup_{{x,y\in B}}p_{t}^{B}(x,y)\leq \frac{C}{\mu (B)}\left( \frac{w(B)}{t}%
\right) ^{1/\nu }\ \text{for\ all}\ t>0  \label{eq:vol_27}
\end{equation}%
for any ball $B$ of radius $r<\frac{\sigma \overline{R}}{A}$, where $C,A$
are two universal constants independent of $B,t$ and constant $\sigma $
comes from condition $(\mathrm{Nash}_{B})$. Consequently, we have%
\begin{equation*}
(\mathrm{VD})+(\mathrm{RVD})+(\mathrm{Nash}_{B})\Rightarrow (\mathrm{FK}).
\end{equation*}
\end{lemma}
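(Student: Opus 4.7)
The plan is a three-step bootstrap: step one derives $(\mathrm{FK})$ from $(\mathrm{Nash}_B)$ and $(\mathrm{RVD})$; step two uses $(\mathrm{FK})$ to strip the parasitic $\|u\|_2^2$ term from $(\mathrm{Nash}_B)$, yielding a classical Nash inequality on $\mathcal{F}(B)$; step three runs the standard Nash ODE to obtain ultracontractivity and hence the heat-kernel bound (\ref{eq:vol_27}).

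For step one I fix a precompact open $D\subset B=B(x_0,R)$ and $u\in\mathcal{F}(D)$. Since $u$ vanishes off $D$, the Cauchy--Schwarz inequality gives $\|u\|_1^2\leq \mu(D)\|u\|_2^2$. By $(\mathrm{RVD})$ I can pick $\lambda\geq 1$, depending only on the (reverse) volume-doubling constants and on $\nu,C$, such that the enlarged ball $B'=B(x_0,\lambda R)$ satisfies $C(\mu(B)/\mu(B'))^\nu \leq 1/2$. Applying $(\mathrm{Nash}_B)$ on $B'$ to $u\in \mathcal{F}(B')$, substituting the Cauchy--Schwarz bound and absorbing the $\|u\|_2^2$ term into the left-hand side gives
\begin{equation*}
\|u\|_2^2 \leq 2C\left(\frac{\mu(D)}{\mu(B')}\right)^\nu w(B')\,\mathcal{E}(u,u).
\end{equation*}
Using $w(B')\leq \tilde C w(B)$ from (\ref{eq:vol_0}) and $\mu(B')\geq \mu(B)$, this rearranges to $(\mathrm{FK})$ with $A=\lambda$, valid whenever $R<\sigma\overline{R}/A$ so that $(\mathrm{Nash}_B)$ on $B'$ is available.

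For step two, the freshly obtained $(\mathrm{FK})$ with $D=B$ produces $\lambda_1(B)\geq c/w(B)$, hence $\|u\|_2^2\leq Cw(B)\mathcal{E}(u,u)$ for all $u\in\mathcal{F}(B)$. The bracket in $(\mathrm{Nash}_B)$ collapses to $C'w(B)\mathcal{E}(u,u)$, yielding the classical Nash inequality
\begin{equation*}
\|u\|_2^{2+2\nu}\leq \frac{C''w(B)}{\mu(B)^\nu}\|u\|_1^{2\nu}\,\mathcal{E}(u,u).
\end{equation*}
For step three I take $f\in L^1(B)\cap L^2(B)$ with $\|f\|_1\leq 1$, set $u_t:=P_t^B f$ and $\phi(t):=\|u_t\|_2^2$. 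Combining $\phi'(t)=-2\mathcal{E}(u_t,u_t)$, the $L^1$-contractivity $\|u_t\|_1\leq 1$, and the classical Nash inequality above yields the differential inequality $\phi'(t)\leq -c\mu(B)^\nu w(B)^{-1}\phi(t)^{1+\nu}$, which integrates to $\|P_t^B\|_{L^1\to L^2}^2\leq C\mu(B)^{-1}(w(B)/t)^{1/\nu}$. Self-adjointness identifies $\|P_{t/2}^B\|_{L^2\to L^\infty}$ with $\|P_{t/2}^B\|_{L^1\to L^2}$, and the semigroup factorization $P_t^B=P_{t/2}^B P_{t/2}^B$ produces $\|P_t^B\|_{L^1\to L^\infty}\leq C'\mu(B)^{-1}(w(B)/t)^{1/\nu}$. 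Standard ultracontractivity theory then guarantees the existence of a density $p_t^B(x,y)$ with the pointwise bound (\ref{eq:vol_27}); the ``Consequently'' clause has already been established in step one.

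The main obstacle is step one: one must bootstrap $(\mathrm{FK})$ from a Nash inequality that still carries the parasitic $\|u\|_2^2$ term, with no prior spectral gap available to absorb it. $(\mathrm{RVD})$ is the critical ingredient here, since it lets us force $\mu(B)/\mu(B')$ to be as small as we wish under a controlled enlargement of the ball, and is essentially the only mechanism available to push the coefficient of $\|u\|_2^2$ on the right-hand side strictly below $1$.
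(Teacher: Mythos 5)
Your proposal is correct and uses essentially the same mechanism as the paper: apply $(\mathrm{Nash}_B)$ on an $(\mathrm{RVD})$-enlarged ball $B(x_0,Ar)$ so that the Cauchy--Schwarz bound $\|u\|_1^2\leq\mu(D)\|u\|_2^2$ lets the parasitic $\|u\|_2^2$ term be absorbed, then run the standard Nash ultracontractivity argument (which the paper outsources to \cite[Lemma 5.5]{GrigoryanHu.2014.MMJ505}). The only difference is organizational: the paper performs the absorption once to obtain the clean Nash inequality (\ref{228}) directly and reads off both the heat-kernel bound and $(\mathrm{FK})$ from it, whereas you derive $(\mathrm{FK})$ first and then feed it back into $(\mathrm{Nash}_B)$ to clean up the bracket -- a harmless two-pass version of the same argument.
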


\begin{proof}
Assume that $A>1$ is a number to be chosen, see (\ref{aa}) below. Let $%
B:=B(x_{0},r)$ with%
\begin{equation}
0<r<\frac{\sigma \overline{R}}{A}.  \label{rr1}
\end{equation}%
Since $Ar$ is less than $\sigma \overline{R}$, we can apply condition $(%
\mathrm{Nash}_{B})$ on a ball $B(x_{0},Ar)$ and obtain for any $u\in 
\mathcal{F}(B(x_{0},Ar))$ 
\begin{equation}
||u||_{2}^{2+2\nu }\leq \frac{C||u||_{1}^{2\nu }}{V(x_{0},Ar)^{\nu }}\left(
||u||_{2}^{2}+w(x_{0},Ar)\mathcal{E}(u)\right) .  \label{eq:vol_21}
\end{equation}%
Note that for all $u\in \mathcal{F}(B(x_{0},r))$, 
\begin{equation}
||u||_{1}=\int_{B(x_{0},r)}|u|d\mu \leq {V(x_{0},r)}^{1/2}||u||_{2}.
\label{eq:vol_15}
\end{equation}%
Since $\mathcal{F}(B(x_{0},r))\subset \mathcal{F}(B(x_{0},Ar))$ for any $A>1$%
, it follows (\ref{eq:vol_15}), (\ref{eq:vol_21}) that for any $u\in 
\mathcal{F}(B(x_{0},r))$ 
\begin{eqnarray*}
||u||_{2}^{2+2\nu } &\leq &\frac{C\left( {V(x_{0},r)}^{1/2}||u||_{2}\right)
^{2\nu }}{V(x_{0},Ar)^{\nu }}||u||_{2}^{2}+\frac{C||u||_{1}^{2\nu }}{%
V(x_{0},Ar)^{\nu }}w(x_{0},Ar)\mathcal{E}(u) \\
&=&C\left( \frac{V(x_{0},r)}{V(x_{0},Ar)}\right) ^{\nu }||u||_{2}^{2(1+\nu
)}+\frac{Cw(x_{0},Ar)}{V(x_{0},Ar)^{\nu }}||u||_{1}^{2\nu }\mathcal{E}(u).
\end{eqnarray*}%
By condition $(\mathrm{RVD})$, we have 
\begin{equation*}
\frac{V(x_{0},r)}{V(x_{0},Ar)}\leq \frac{1}{C_{d}A^{d_{1}}}=\left( \frac{1}{%
2C}\right) ^{1/\nu },
\end{equation*}%
provided that 
\begin{equation}
A=C_{d}^{-1/d_{1}}(2C)^{\frac{1}{\nu d_{1}}}>1.  \label{aa}
\end{equation}%
Therefore, for all $u\in \mathcal{F}(B(x_{0},r))$, 
\begin{equation}
||u||_{2}^{2+2\nu }\leq 2C\frac{w(x_{0},Ar)}{V(x_{0},r)^{\nu }}%
||u||_{1}^{2\nu }\mathcal{E}(u),  \label{228}
\end{equation}%
which gives that 
\begin{equation*}
\mathcal{E}(u)\geq \frac{1}{2C}\frac{V(x_{0},r)^{\nu }}{w(x_{0},Ar)}%
||u||_{2}^{2+2\nu }||u||_{1}^{-2\nu }\geq \frac{C^{\prime }\mu (B)^{\nu }}{%
w(B)}||u||_{2}^{2+2\nu }||u||_{1}^{-2\nu }.
\end{equation*}%
Applying \cite[Lemma 5.5]{GrigoryanHu.2014.MMJ505} with $U=B(x_{0},r)$, $%
a=C^{\prime }\frac{{\mu (B)}^{\nu }}{w(B)}$, we conclude that the Dirichlet
heat kernel $p_{t}^{B}(x,y)$ exists and satisfies (\ref{eq:vol_27}).

We will show that condition $(\mathrm{FK}$) follows from (\ref{228}).

Indeed, let $D\subset B$ be an open subset, and let $u\in \mathcal{F}(D)$.
Noting that%
\begin{equation*}
||u||_{1}^{2}\leq \mu (D)||u||_{2}^{2}
\end{equation*}%
by the Cauchy-Schwarz inequality, we see from (\ref{228}) and (\ref{eq:vol_0}%
) that 
\begin{equation*}
||u||_{2}^{2+2\nu }\leq 2Cw(x_{0},Ar)\left( \frac{\mu (D)}{\mu (B)}\right)
^{\nu }||u||_{2}^{2\nu }\mathcal{E}(u)\leq C^{\prime \prime }w(B)\left( 
\frac{\mu (D)}{\mu (B)}\right) ^{\nu }||u||_{2}^{2\nu }\mathcal{E}(u),
\end{equation*}%
thus showing that 
\begin{equation*}
\lambda _{1}(D)=\underset{u\in \mathcal{F}(D)\backslash \{0\}}{\inf }\frac{%
\mathcal{E}(u)}{||u||_{2}^{2}}\geq \frac{c^{\prime }}{w(B)}\left( \frac{\mu
(B)}{\mu (D)}\right) ^{\nu }.
\end{equation*}%
Therefore, the Faber-Krahn inequality holds for any ball $B$ of radius $r$
satisfying (\ref{rr1}).
\end{proof}

We remark that if the metric space $(M,d)$ is connected and unbounded, then
condition $(\mathrm{VD})$ implies condition $(\mathrm{RVD})$, see for
example \cite[Corollary 5.3]{GrigoryanHu.2014.MMJ505}. In this case, we have
that conditions $\mathrm{(VD)}$, $\mathrm{(PI)}$ will imply condition $%
\mathrm{(FK)}$, since condition $(\mathrm{RVD})$ is automatically true.

\section{A refinement of lemma of growth}

\label{Sect-LG}In this section we shall derive the lemma of growth for any
two concentric balls $B,\delta B$ with $0<\delta <1$, which is a refinement
of the version stated in a forthcoming paper \cite{ghh21TP}, see also \cite[%
Lemma 4.1]{GrigoryanHuHu.2018.AM433}. The lemma of growth will follow from
conditions $(\mathrm{VD})$, $(\mathrm{Gcap})$, $(\mathrm{FK})$, $(\mathrm{TJ}%
)$. The basic tool in the proof is to use the celebrated De-Giorgi iteration
technique for occupation measures (instead of for $L^{2}$-norms). Although
the idea is essentially the same as in \cite{ghh21TP}, \cite[Proof of Lemma
4.1]{GrigoryanHuHu.2018.AM433}, we sketch the proof for the reader's
convenience.

Before we address the lemma of growth, we give the following preliminary.
For each $n\geq 1$, let $F_{n}$ be a function on $[0,\infty )$ given by%
\begin{equation}
F_{n}(r)=\frac{1}{2}\left( r+\sqrt{r^{2}+\frac{1}{n^{2}}}\right) -\frac{1}{2n%
}\text{ \ for }r\in (-\infty ,\infty ).  \label{F1}
\end{equation}%
Clearly, $F_{n}(0)=0$, and for any $r\in (-\infty ,\infty )$, 
\begin{eqnarray}
&&0\leq F_{n}^{\prime }(r)=\frac{1}{2}\left( 1+\frac{r}{\sqrt{r^{2}+n^{-2}}}%
\right) \leq 1\text{,\ \ }  \notag \\
&&0\leq F_{n}^{\prime \prime }(r)=\frac{1}{2n^{2}(r^{2}+n^{-2})^{3/2}}\leq 
\frac{n}{2},  \notag \\
&&F_{n}(r)\rightrightarrows r_{+}\ \ \text{uniformly in }(-\infty ,\infty )\ 
\text{as}\ n\rightarrow \infty .  \label{700}
\end{eqnarray}

\begin{proposition}
\label{P21}Let $(\mathcal{E},\mathcal{F})$ be a regular Dirichlet form in $%
L^{2}(M,\mu )$ without killing part and let $F_{n}$ be given by (\ref{F1}).
Then for any $u\in \mathcal{F}^{\prime }\cap L^{\infty }$ and any $0\leq
\varphi \in \mathcal{F}\cap L^{\infty }$,%
\begin{equation}
\mathcal{E}(u_{+},\varphi )\leq \limsup_{k\rightarrow \infty }\mathcal{E}%
(u,F_{n_{k}}^{\prime }(u)\varphi )  \label{F2}
\end{equation}%
for a subsequence $\{n_{k}\}_{k\geq 1}$ of $\{n\}_{n\geq 1}$.
\end{proposition}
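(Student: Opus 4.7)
The plan is to smooth $r\mapsto r_{+}$ by the $C^{2}$ approximants $F_{n}$, derive a pointwise chain-type inequality with $u_{+}$ replaced by $F_{n}(u)$ for each $n$, and then pass to the limit along a weakly convergent subsequence in the Hilbert space $\mathcal{F}$ equipped with the inner product $\mathcal{E}(\cdot ,\cdot )+(\cdot ,\cdot )$.

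\emph{Step 1 (pointwise inequality).} I would first establish, for every $n\geq 1$,
\begin{equation*}
\mathcal{E}(F_{n}(u),\varphi )\leq \mathcal{E}(u,F_{n}^{\prime }(u)\varphi ).
\end{equation*}
For the local part, the chain and Leibniz rules for energy measures (cf.\ \cite[Chapter~3]{FukushimaOshimaTakeda.2011.489}) give $d\Gamma ^{(L)}\langle F_{n}(u),\varphi \rangle =F_{n}^{\prime }(u)\,d\Gamma ^{(L)}\langle u,\varphi \rangle $ and $d\Gamma ^{(L)}\langle u,F_{n}^{\prime }(u)\varphi \rangle =F_{n}^{\prime }(u)\,d\Gamma ^{(L)}\langle u,\varphi \rangle +\varphi F_{n}^{\prime \prime }(u)\,d\Gamma ^{(L)}\langle u\rangle $, and the second dominates the first because $\varphi \geq 0$ and $F_{n}^{\prime \prime }\geq 0$ by \eqref{700}. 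For the jump part, set $a=u(x),b=u(y),\alpha =\varphi (x),\beta =\varphi (y)$; convexity of $F_{n}$ yields $F_{n}^{\prime }(b)(a-b)\leq F_{n}(a)-F_{n}(b)\leq F_{n}^{\prime }(a)(a-b)$, so because $\alpha ,\beta \geq 0$,
\begin{equation*}
(a-b)\bigl(F_{n}^{\prime }(a)\alpha -F_{n}^{\prime }(b)\beta \bigr)-\bigl(F_{n}(a)-F_{n}(b)\bigr)(\alpha -\beta )=\alpha \bigl[(a-b)F_{n}^{\prime }(a)-(F_{n}(a)-F_{n}(b))\bigr]+\beta \bigl[(F_{n}(a)-F_{n}(b))-(a-b)F_{n}^{\prime }(b)\bigr]\geq 0.
\end{equation*}
Integrating against the symmetric measure $J$ gives $\mathcal{E}^{(J)}(F_{n}(u),\varphi )\leq \mathcal{E}^{(J)}(u,F_{n}^{\prime }(u)\varphi )$. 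A preliminary check is that $F_{n}^{\prime }(u)\varphi \in \mathcal{F}$: writing $u=v+a$ with $v\in \mathcal{F}\cap L^{\infty }$, the map $F_{n}^{\prime }$ is bounded and $(n/2)$-Lipschitz, so after rescaling the Markov property yields $F_{n}^{\prime }(u)-F_{n}^{\prime }(a)\in \mathcal{F}$, hence $F_{n}^{\prime }(u)\in \mathcal{F}^{\prime }$; the product with $\varphi $ lies in $\mathcal{F}$ because $\mathcal{F}\cap L^{\infty }$ is an algebra.

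\emph{Step 2 (passage to the limit).} Let $g_{a}(r):=F_{n}(r+a)-F_{n}(a)$, which is a normal contraction. Then $h_{n}:=F_{n}(u)-F_{n}(a)=g_{a}(v)\in \mathcal{F}$ with
\begin{equation*}
\mathcal{E}(h_{n})=\mathcal{E}(F_{n}(u))\leq \mathcal{E}(u),
\end{equation*}
since $|F_{n}^{\prime }|\leq 1$ controls the local part and $F_{n}$ being $1$-Lipschitz controls the jump part. From $|h_{n}|\leq |v|$ and the uniform convergence $F_{n}\rightrightarrows r_{+}$ in \eqref{700}, dominated convergence gives $h_{n}\rightarrow h:=u_{+}-a_{+}$ in $L^{2}$. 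Thus $\{h_{n}\}$ is bounded in $\mathcal{F}$ and strongly convergent in $L^{2}$, so Banach--Alaoglu yields a subsequence $h_{n_{k}}$ converging weakly in $\mathcal{F}$; its $L^{2}$-limit forces the weak limit to be $h$. Since $f\mapsto \mathcal{E}(f,\varphi )$ is continuous on $\mathcal{F}$ by Cauchy--Schwarz, we obtain
\begin{equation*}
\mathcal{E}(u_{+},\varphi )=\mathcal{E}(h,\varphi )=\lim_{k\rightarrow \infty }\mathcal{E}(F_{n_{k}}(u),\varphi )\leq \limsup_{k\rightarrow \infty }\mathcal{E}(u,F_{n_{k}}^{\prime }(u)\varphi ),
\end{equation*}
which is the desired inequality.

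\emph{Main difficulty.} The delicate point is that the convexity of $F_{n}$ is exactly what makes the ``remainder'' in Step~1 non-negative on both the local side (where Leibniz produces $\varphi F_{n}^{\prime \prime }(u)\,d\Gamma ^{(L)}\langle u\rangle $) and the jump side (the algebraic identity above); a concave choice of $F_{n}$ would flip the inequality. The subsequence in the statement appears because one does not have convergence of $\mathcal{E}(u,F_{n}^{\prime }(u)\varphi )$ itself—only uniform energy bounds on $F_{n}(u)$ together with $L^{2}$-convergence—so weak compactness in $\mathcal{F}$ is the only available tool to identify the limit on the left-hand side, and only a limsup remains on the right.
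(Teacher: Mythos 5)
Your proof is correct and follows the same strategy as the paper: approximate $r_{+}$ by $F_{n}$, establish the pointwise inequality $\mathcal{E}(F_{n}(u),\varphi )\leq \mathcal{E}(u,F_{n}^{\prime }(u)\varphi )$, then pass to the limit along a subsequence that converges weakly in $\mathcal{F}$ after observing that $F_{n}(u)-F_{n}(a)$ is a normal contraction of $v$ bounded in energy. The only difference is that you supply a direct proof (via the chain/Leibniz rules on the local side and the convexity identity on the jump side) of the inequality that the paper delegates to Proposition~\ref{P61} (citing \cite{ghh21TP}), and you invoke Banach--Alaoglu directly where the paper cites Lemma~\ref{lem:A1}; these amount to the same thing.
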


\begin{proof}
Note that the functions $F_{n}(u)$, $F_{n}^{\prime }(u)\varphi $ belong to $%
\mathcal{F}\cap L^{\infty }$ for each $n\geq 1$ by using Proposition \ref%
{P61} in Appendix. Since $\varphi \geq 0$ in $M$, we have 
\begin{equation}
\mathcal{E}(F_{n}(u),\varphi )\leq \mathcal{E}(u,F_{n}^{\prime }(u)\varphi )%
\text{ \ (}n\geq 1\text{)}  \label{F3}
\end{equation}%
by using (\ref{eq61}) in Appendix.

Write $u=v+a$ for some $v\in \mathcal{F}$ and $a\in 
\mathbb{R}
$. Since $F_{n}(v+a)-F_{n}(a)$ is a normal contraction of $v\in \mathcal{F}$%
, we have 
\begin{equation*}
f_{n}:=F_{n}(u)-F_{n}(a)=F_{n}(v+a)-F_{n}(a)\in \mathcal{F}\text{ \ and \ }%
\mathcal{E}(f_{n},f_{n})\leq \mathcal{E}(v,v).
\end{equation*}%
Since $(v+a)_{+}-a_{+}$ is also a normal contraction of $v\in \mathcal{F}$,
we also have%
\begin{equation*}
f:=u_{+}-a_{+}=(v+a)_{+}-a_{+}\in \mathcal{F}\text{.}
\end{equation*}%
On the other hand, by the dominated convergence theorem, 
\begin{equation*}
f_{n}\overset{L^{2}}{\longrightarrow }f\ \ \text{as}\ n\rightarrow \infty .
\end{equation*}%
Since $f_{n}\in \mathcal{F}$ and 
\begin{equation*}
\sup_{n}\mathcal{E}(f_{n},f_{n})\leq \mathcal{E}(v,v)<\infty ,
\end{equation*}%
there exists a subsequence $\left\{ f_{n_{k}}\right\} _{k\geq 1}$ converging
to $f$ weakly in terms of the energy norm $\mathcal{E}$ by using Lemma \ref%
{lem:A1} in Appendix. Therefore, 
\begin{eqnarray*}
\mathcal{E}(u_{+},\varphi ) &=&\mathcal{E}(f+a_{+},\varphi )=\mathcal{E}%
(f,\varphi )=\lim_{k\rightarrow \infty }\mathcal{E}(f_{n_{k}},\varphi ) \\
&=&\lim_{k\rightarrow \infty }\mathcal{E}(F_{n_{k}}(u)-F_{n_{k}}(a),\varphi
)=\limsup_{k\rightarrow \infty }\mathcal{E}(F_{n_{k}}(u),\varphi )\leq
\limsup_{k\rightarrow \infty }\mathcal{E}(u,F_{n_{k}}^{\prime }(u)\varphi )
\end{eqnarray*}%
by virtue of (\ref{F3}), thus showing (\ref{F2}). The proof is complete.
\end{proof}

We recall condition $(\mathrm{LG})$, termed the \emph{lemma of growth},
which was introduced in \cite[Lemma 4.1]{GrigoryanHuHu.2018.AM433} for the
case when $w(x,r)=r^{\beta }$ and $f\equiv 0$. Note that the following
notion of lemma of growth involves a given function $f.$

\begin{definition}
\label{def-LG}For any two fixed numbers $\varepsilon ,\delta $ in $(0,1)$,
we say that \emph{condition }$\mathrm{LG}(\varepsilon ,\delta )$ holds if
there exist four constants $\sigma \in (0,1),\varepsilon _{0}\in (0,\frac{1}{%
2})$ and $\theta ,C_{L}>0$ such that, for any ball $B:=B(x_{0},R)$ with
radius $R\in (0,\sigma \overline{R})$, any function $f\in L^{\infty }(B)$,
and for any $u\in \mathcal{F^{\prime }}\cap L^{\infty }$ which is $f$%
-superharmonic and non-negative in $B$, if for some $a>0$ 
\begin{equation}
\frac{\mu (B\cap \{u<a\})}{\mu (B)}\leq \varepsilon _{0}(1-\varepsilon
)^{2\theta }(1-\delta )^{C_{L}\theta }\left( 1+\frac{w(B)\left( T_{\frac{%
3+\delta }{4}B,B}(u_{-})+||f||_{L^{\infty }(B)}\right) }{\varepsilon a}%
\right) ^{-\theta },  \label{eq:vol_317}
\end{equation}%
then 
\begin{equation}
\einf_{{\delta B}}u\geq \varepsilon a,  \label{eq:a/2}
\end{equation}%
where the tail $T_{\frac{3+\delta }{4}B,B}(u_{-})$ is defined by (\ref{tj2}%
), that is%
\begin{equation*}
T_{\frac{3+\delta }{4}B,B}(u_{-})=\esup_{x\in \frac{3+\delta }{4}%
B}\int_{M\setminus B}u_{-}(y)J(x,dy).
\end{equation*}%
For simplicity, we write condition\emph{\ }$\mathrm{LG}(\varepsilon ,\delta
) $ by condition\emph{\ }$(\mathrm{LG})$ without mentioning $\varepsilon
,\delta $.
\end{definition}

We remark that the constants $\sigma ,\varepsilon _{0},\theta ,C_{L}$ are
all independent of $\varepsilon ,\delta $. Recall that condition $(\mathrm{EP%
})$, termed the \emph{energy product} of a function $u$ with some cutoff
function $\phi $, was introduced in \cite{ghh21TP}.

\begin{definition}[Condition $(\mathrm{EP})$]
\label{def:EP}\textrm{We say that the \emph{condition} $(\mathrm{EP})$ is
satisfied if there exist two universal constants $C>0,C_{0}\geq 0$ such
that, for any three concentric balls $B_{0}:=B(x_{0},R)$, $B:=B(x_{0},R+r)$
and $\Omega :=B(x_{0},R^{\prime })$ with $0<R<R+r<R^{\prime }<\overline{R}$,
and for any $u\in \mathcal{F}^{\prime }\cap L^{\infty }$, there exists some $%
\phi \in \mathrm{cutoff}(B_{0},B)$ such that 
\begin{equation}
\mathcal{E}(u\phi )\leq \frac{3}{2}\mathcal{E}(u,u\phi ^{2})+\frac{C}{%
w(x_{0},r)}\left( \frac{R^{\prime }}{r}\right) ^{C_{0}}\int_{\Omega
}u^{2}d\mu +3\int_{\Omega \times \Omega ^{c}}u(x)u(y)\phi ^{2}(x)J(dx,dy).
\label{eq:EP}
\end{equation}
}
\end{definition}

Condition $(\mathrm{EP}$) plays an important role in deriving condition $(%
\mathrm{LG})$. The following has been proved in \cite{ghh21TP}.

\begin{lemma}[\protect\cite{ghh21TP}]
\label{L31}Assume that $(\mathcal{E},\mathcal{F})$ is a regular Dirichlet
form in $L^{2}$ without killing part. Then 
\begin{equation}
(\mathrm{Gcap})+(\mathrm{TJ})\Rightarrow (\mathrm{EP}).  \label{eq:ABB->EP-1}
\end{equation}
\end{lemma}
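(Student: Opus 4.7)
The plan is to prove $(\mathrm{EP})$ by decomposing $\mathcal{E}(u\phi)$ via a Leibniz-type expansion of the energy of a product, and then controlling the resulting error terms using $(\mathrm{Gcap})$ (to handle the ``gradient-of-$\phi$'' terms weighted by $u^2$) and $(\mathrm{TJ})$ (to localize the long-range jumps). First I would apply $(\mathrm{Gcap})$ to $u$ on the concentric balls $B_0 \Subset B$ to obtain a cutoff $\phi \in \mathrm{cutoff}(B_0, B)$ satisfying $\mathcal{E}(u^2\phi, \phi) \leq \frac{C}{w(x_0, r)}\int_B u^2\,d\mu$; this is the same $\phi$ that will appear in $(\mathrm{EP})$.

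Next I would write $\mathcal{E}(u\phi) = \mathcal{E}^{(L)}(u\phi) + \mathcal{E}^{(J)}(u\phi)$ and treat each piece via a product identity. For the local part, the Leibniz rule for the energy measure gives
\[
\mathcal{E}^{(L)}(u\phi) = \mathcal{E}^{(L)}(u, u\phi^2) + \int u^2\,d\Gamma^{(L)}\langle\phi\rangle.
\]
For the jump part, the pointwise algebraic identity
\[
(u\phi(x) - u\phi(y))^2 = (u(x) - u(y))(u\phi^2(x) - u\phi^2(y)) + u(x)u(y)(\phi(x) - \phi(y))^2
\]
(verified by direct expansion) integrates against $J$ to give
\[
\mathcal{E}^{(J)}(u\phi) = \mathcal{E}^{(J)}(u, u\phi^2) + \iint u(x)u(y)(\phi(x) - \phi(y))^2\,J(dx, dy).
\]
I would then split this last double integral over $\Omega\times\Omega$, $\Omega\times\Omega^c$, $\Omega^c\times\Omega$, and $\Omega^c\times\Omega^c$: on $\Omega^c\times\Omega^c$ the integrand vanishes since $\phi \equiv 0$ on $B^c \supset \Omega^c$; on the two cross strips the factor $(\phi(x) - \phi(y))^2$ collapses to $\phi^2$ at the interior endpoint, and the symmetry of $J$ isolates the $\Omega \to \Omega^c$ contribution that will feed into the $3\iint_{\Omega\times\Omega^c}$ term of $(\mathrm{EP})$.

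What remains is the ``internal'' error $\int u^2\,d\Gamma^{(L)}\langle\phi\rangle + \iint_{\Omega\times\Omega} u(x)u(y)(\phi(x) - \phi(y))^2\,J(dx, dy)$ plus an $L^2$ tail $\iint_{\Omega\times\Omega^c} u^2(x)\phi^2(x)\,J(dx, dy)$. I would bound the internal error via a pointwise Young inequality with parameter $\eta = \tfrac{1}{2}$---which yields exactly the constants $(1+\eta) = \tfrac{3}{2}$ and $(1+\eta^{-1}) = 3$ demanded by $(\mathrm{EP})$---so that the ``$u$-gradient'' portion reconstructs $\tfrac{3}{2}\mathcal{E}(u, u\phi^2)$ while the ``$\phi$-gradient'' portion weighted by $u^2$ is controlled by $\mathcal{E}(u^2\phi, \phi)$ via $(\mathrm{Gcap})$, producing a contribution bounded by $\frac{C}{w(x_0, r)}\int_\Omega u^2\,d\mu$. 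The $L^2$ tail from jumps out of $\Omega$ is then handled by $(\mathrm{TJ})$: for $x \in B$ and $y \in \Omega^c$ the bound $d(x, y) \geq R' - R - r$ gives $J(x, \Omega^c) \leq C/w(x, R' - R - r)$, and comparing $w(x, R' - R - r)$ with $w(x_0, r)$ via (\ref{eq:vol_0}) introduces the polynomial factor $(R'/r)^{C_0}$ in front of $\frac{1}{w(x_0, r)}\int_\Omega u^2\,d\mu$.

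The hard part will be the bookkeeping of the sign-indefinite cross-terms $u(x)u(y)(\phi(x) - \phi(y))^2$ (on $\Omega\times\Omega$) and $\mathcal{E}^{(J)}(u^2, \phi^2)$: for a general $u \in \mathcal{F}'\cap L^\infty$ that can change sign, these cannot be discarded by positivity and must be absorbed through symmetric Young inequalities with carefully matched constants, so that the final coefficient in front of $\mathcal{E}(u, u\phi^2)$ is precisely $\tfrac{3}{2}$ (rather than a generic constant larger than $1$), and so that the factor introduced by $(\mathrm{TJ})$ through (\ref{eq:vol_0}) remains polynomial, namely $(R'/r)^{C_0}$, even when $R' - R - r$ is much smaller than $R'$.
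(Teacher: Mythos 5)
First, a point of reference: the paper does not prove this lemma at all --- it is quoted from the forthcoming reference \cite{ghh21TP} --- so there is no in-paper argument to compare against, and your proposal must stand on its own. Its skeleton is the standard and correct one (exact product identities for the local and jump parts of $\mathcal{E}(u\phi)$, $(\mathrm{Gcap})$ for the cutoff terms, $(\mathrm{TJ})$ for long jumps), and the two identities you write down are both correct. The gap sits exactly where you yourself flag ``the hard part'': you assert that the $u^{2}$-weighted $\phi$-gradient error $\int u^{2}\,d\Gamma^{(L)}\langle\phi\rangle+\iint u(x)u(y)(\phi(x)-\phi(y))^{2}J(dx,dy)$ ``is controlled by $\mathcal{E}(u^{2}\phi,\phi)$ via $(\mathrm{Gcap})$''. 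It is not, by the means you describe: $(\mathrm{Gcap})$ bounds only the \emph{signed bilinear} quantity $\mathcal{E}(u^{2}\phi,\phi)$, and a direct computation gives
\begin{equation*}
\mathcal{E}^{(J)}(u^{2}\phi,\phi)=\iint u^{2}(x)\bigl(\phi(x)-\phi(y)\bigr)^{2}J(dx,dy)+\tfrac{1}{2}\,\mathcal{E}^{(J)}(u^{2},\phi^{2}),
\end{equation*}
and similarly $\mathcal{E}^{(L)}(u^{2}\phi,\phi)=\int u^{2}\,d\Gamma^{(L)}\langle\phi\rangle+2\int u\phi\,d\Gamma^{(L)}\langle u,\phi\rangle$. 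So the nonnegative quantities you need differ from the $(\mathrm{Gcap})$-controlled one by sign-indefinite cross terms of exactly the same strength as everything else in play. When you then try to absorb those cross terms ``through symmetric Young inequalities'', the Young step regenerates the very quantities you were trying to control: with $a=u(x),b=u(y),s=\phi(x),t=\phi(y)$ one gets $|(a-b)(s-t)(as+bt)|\le\frac{\theta}{2}(a-b)^{2}(s^{2}+t^{2})+\frac{1}{2\theta}(a^{2}+b^{2})(s-t)^{2}$, and the first term cannot be absorbed into $\iint \phi(x)\phi(y)(u(x)-u(y))^{2}J$ or into $\mathcal{E}^{(J)}(u,u\phi^{2})$ because $s^{2}+t^{2}$ does not vanish where $st$ does. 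Passing from the bilinear bound of $(\mathrm{Gcap})$ to a bound on $\iint u^{2}(x)(\phi(x)-\phi(y))^{2}J$ is a genuine self-improvement statement that needs its own proof (typically an iteration of $(\mathrm{Gcap})$ over a chain of intermediate annuli between $B_{0}$ and $\Omega$, combined with $(\mathrm{TJ})$ for the long-range part --- which is also where the factor $(R'/r)^{C_{0}}$ really comes from).

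A second, more localized error is your account of the tail. For $x\in B=B(x_{0},R+r)$ and $y\in\Omega^{c}$ one only has $d(x,y)\ge R'-R-r$, and $R'-R-r$ may be arbitrarily small compared with $r$; comparing $w(x,R'-R-r)$ with $w(x_{0},r)$ via (\ref{eq:vol_0}) then produces a factor $(r/(R'-R-r))^{\beta_{2}}$, which is \emph{not} dominated by $(R'/r)^{C_{0}}$. Relatedly, the short jumps from $B$ into $\Omega\setminus B$ (out of $\mathrm{supp}\,\phi$ but still inside $\Omega$) are not controlled by $(\mathrm{TJ})$ at all, since $(\mathrm{TJ})$ only bounds $J(x,B(x,\rho)^{c})$ for a fixed positive $\rho$. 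This is precisely why $(\mathrm{EP})$ retains the full cross term over $\Omega\times\Omega^{c}$ and tolerates an error integral over all of $\Omega$ with the polynomial factor in front; your plan of ``collapsing $(\phi(x)-\phi(y))^{2}$ to $\phi^{2}$ on the cross strips and then invoking $(\mathrm{TJ})$'' does not reach these terms.
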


We shall prove the lemma of growth, where condition $(\mathrm{EP}$) is our
starting point, instead of from condition $(\mathrm{Gcap})$. The idea is
essentially adopted from \cite{GrigoryanHuHu.2018.AM433, ghh21TP}.

\begin{lemma}
\label{lemma:LG}Let $(\mathcal{E},\mathcal{F})$ be a regular Dirichlet form
in $L^{2}$ without killing part. If conditions $(\mathrm{VD})$, $(\mathrm{FK}%
)$, $(\mathrm{TJ})$ and $(\mathrm{EP})$ are satisfied, then condition $(%
\mathrm{LG})$ holds with $\theta =1/\nu $ and $C_{L}=C_{0}+\beta _{2}+d_{2}$%
, where the constants $\sigma ,\nu $ are taken same as in condition $(%
\mathrm{FK})$ and $C_{0}$ same as in condition $(\mathrm{EP})$. Namely, we
have 
\begin{equation}
(\mathrm{VD})+(\mathrm{FK})+(\mathrm{TJ})+(\mathrm{EP})\Rightarrow (\mathrm{%
LG})\text{.}  \label{60}
\end{equation}%
Consequently, 
\begin{equation}
(\mathrm{VD})+(\mathrm{Gcap})+(\mathrm{FK})+(\mathrm{TJ})\Rightarrow (%
\mathrm{LG}).  \label{76}
\end{equation}
\end{lemma}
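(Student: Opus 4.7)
The plan is to establish $(\mathrm{VD})+(\mathrm{FK})+(\mathrm{TJ})+(\mathrm{EP})\Rightarrow (\mathrm{LG})$ by a De Giorgi-type iteration on occupation measures of shrinking sublevel sets, in the spirit of \cite[Lemma 4.1]{GrigoryanHuHu.2018.AM433} and \cite{ghh21TP}; the consequence (\ref{76}) then follows at once from Lemma \ref{L31}. Given $B=B(x_0,R)$ with $R<\sigma\overline{R}$, choose nested concentric balls $B_k=B(x_0,r_k)$ with $r_0$ just below $\frac{3+\delta}{4}R$ and $r_k\searrow\delta R$ such that $r_k-r_{k+1}\asymp(1-\delta)R\cdot 2^{-k}$, together with levels $a_0=a$, $a_k\searrow\varepsilon a$ with $a_k-a_{k+1}\asymp(1-\varepsilon)a\cdot 2^{-k}$. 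Set $v_k:=(a_k-u)_+\in\mathcal{F}'\cap L^\infty$, $A_k:=B_k\cap\{u<a_k\}$, and $\xi_k:=\mu(A_k)/\mu(B)$; the goal is to show $\xi_k\to 0$ under the smallness hypothesis (\ref{eq:vol_317}), which forces $\einf_{\delta B}u\ge\varepsilon a$.

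For each $k$, apply $(\mathrm{EP})$ to the triple $(B_{k+1},B_k,\Omega_k)$, where $\Omega_k$ is a fixed ball with $B_k\subset\Omega_k\subset B_{k-1}\cap\frac{3+\delta}{4}B$, to obtain a cutoff $\phi_k\in\mathrm{cutoff}(B_{k+1},B_k)$ satisfying
\[
\mathcal{E}(v_k\phi_k)\le\tfrac{3}{2}\mathcal{E}(v_k,v_k\phi_k^2)+\frac{C\,2^{kC_0}}{(1-\delta)^{C_0}\,w(B)}\int_{\Omega_k}v_k^2\,d\mu+3\iint_{\Omega_k\times\Omega_k^c}v_k(x)v_k(y)\phi_k^2(x)\,J(dx,dy),
\]
the factor $(1-\delta)^{-C_0}$ arising from $R'/r_{\mathrm{gap}}\asymp 2^k/(1-\delta)$ via (\ref{eq:vol_0}). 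Testing the $f$-superharmonic inequality against the nonnegative $v_k\phi_k^2\in\mathcal{F}(B)$ (approximated when necessary by $F_n(a_k-u)\phi_k^2$ from Proposition \ref{P21}), using $\mathcal{E}(u,v_k\phi_k^2)=-\mathcal{E}(a_k-u,v_k\phi_k^2)$ together with the disjoint-support bound $\mathcal{E}^{(J)}((u-a_k)_+,v_k\phi_k^2)\le 0$, gives $\mathcal{E}(v_k,v_k\phi_k^2)\le\|f\|_{L^\infty(B)}\,a\,\mu(A_k)$; also $\int_{\Omega_k}v_k^2\le a^2\mu(A_{k-1})$ since $\Omega_k\subset B_{k-1}$, and the cross-term is bounded by splitting $v_k(y)\le a+u_-(y)$ on $\Omega_k^c$ and using $(\mathrm{TJ})$ (which yields $\int_{\Omega_k^c}J(x,dy)\le C\,2^{k\beta_2}/((1-\delta)^{\beta_2}w(B))$ for $x\in B_k$) by $Ca\bigl[T_{\frac{3+\delta}{4}B,B}(u_-)+a\,2^{k\beta_2}/((1-\delta)^{\beta_2}w(B))\bigr]\mu(A_k)$.

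On the other side, $(\mathrm{FK})$ applied to $D=A_k\subset B$ gives $\|v_k\phi_k\|_2^2\le C\,w(B)(\mu(A_k)/\mu(B))^\nu\mathcal{E}(v_k\phi_k)$, while $\phi_k\equiv 1$ on $B_{k+1}$ and $v_k\ge a_k-a_{k+1}\gtrsim(1-\varepsilon)a\cdot 2^{-k}$ on $A_{k+1}$ give $\|v_k\phi_k\|_2^2\ge c(1-\varepsilon)^2 a^2 4^{-k}\mu(A_{k+1})$. Combining these with $(\mathrm{VD})$ (absorbing $\mu(B)/\mu(\frac{1+\delta}{2}B)\le C(1-\delta)^{-d_2}$) produces a recursion of the form
\[
\xi_{k+1}\le\frac{C\,\tilde\Lambda^k}{(1-\varepsilon)^2(1-\delta)^{C_L}}\Bigl(1+\frac{w(B)\bigl(T_{\frac{3+\delta}{4}B,B}(u_-)+\|f\|_{L^\infty(B)}\bigr)}{\varepsilon a}\Bigr)\xi_{k-1}^{1+\nu}
\]
(with $\xi_{-1}:=\xi_0$) for some $\tilde\Lambda>1$ and $C_L=C_0+\beta_2+d_2$; the standard iteration lemma then yields $\xi_k\to 0$ precisely when $\xi_0$ obeys (\ref{eq:vol_317}) with $\theta=1/\nu$, completing the proof of $(\mathrm{LG})$.

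The main obstacle will be the careful bookkeeping of the nonlocal cross-term $\iint_{\Omega_k\times\Omega_k^c}v_k v_k\phi_k^2\,dJ$: outside $\Omega_k$ the function $v_k$ has no a priori bound, so one must simultaneously exploit $v_k(y)\le a$ (for the piece absorbed by $(\mathrm{TJ})$) and $v_k(y)\le a+u_-(y)$ (producing the $T_{\frac{3+\delta}{4}B,B}(u_-)$ contribution) while tracking all powers of $(1-\delta)$ and $2^k$. A secondary technicality is the legitimacy of $v_k\phi_k^2$ as a test function in $\mathcal{F}(B)$, handled via the $F_n$-regularization of Proposition \ref{P21} and a limit argument using (\ref{700}) and Lemma \ref{lem:A1}.
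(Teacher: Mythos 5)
Your proposal follows essentially the same route as the paper: a De Giorgi iteration on occupation measures over shrinking concentric balls and levels, with $(\mathrm{EP})$ controlling $\mathcal{E}(v_k\phi_k)$, the $f$-superharmonicity (via the $F_n$-regularization of Proposition \ref{P21}) and $(\mathrm{TJ})$ controlling $\mathcal{E}(v_k,v_k\phi_k^2)$ and the nonlocal cross-term, $(\mathrm{FK})$ supplying the lower bound, and the standard iteration lemma yielding the threshold (\ref{eq:vol_317}) with $\theta=1/\nu$ and $C_L=C_0+\beta_2+d_2$. The only point needing repair is your application of $(\mathrm{FK})$ directly to $D=A_k$, which is not open so $\lambda_1(A_k)$ and $\mathcal{F}(A_k)$ are not defined; as in the paper one must pass to an open set $\Omega\supset A_k$ with $\mu(\Omega)\leq\mu(A_k)+\epsilon$ by outer regularity, check $v_k\phi_k\in\mathcal{F}(\Omega)$ via quasi-continuity, and let $\epsilon\to0$.
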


\begin{proof}
Note that any function $u\in \mathcal{F}$ admits a \emph{quasi-continuous}
version $\widetilde{u}$ \cite[Theorem 2.1.3, p.71]%
{FukushimaOshimaTakeda.2011.489}. We will use the same letter $u$ to denote
some quasi-continuous modification of $u$. For any $u\in \mathcal{F}$ and
any open subset $\Omega $ of $M$, a function $u$ belongs to the space $%
\mathcal{F}\left( \Omega \right) $ if and only if $\widetilde{u}=0$ q.e. in $%
\Omega ^{c}$, where q.e. means \emph{quasi-everywhere} (see \cite[Corollary
2.3.1, p.98]{FukushimaOshimaTakeda.2011.489}).

We shall show the implication (\ref{60}).

\noindent Fix a ball $B:=B(x_{0},R)$ with radius $0<R<\sigma \overline{R}$
and a function $f\in L^{\infty }(B)$. Let $u\in \mathcal{F}^{\prime }\cap
L^{\infty }$ be a function that is $f$-superharmonic and non-negative in $B$%
. We will show that (\ref{eq:a/2}) is true if condition (\ref{eq:vol_317})
is satisfied for some $a>0$.

To do this, denote 
\begin{equation*}
B_{r}:=B(x_{0},r)\text{ for any }r>0,
\end{equation*}%
so that $B_{R}=B=B(x_{0},R)$. Fix four numbers $a,b$ and $r_{1},r_{2}$ such
that 
\begin{equation}
0<a<b<\infty \text{ \ and \ }\frac{r_{2}}{2}\leq r_{1}<r_{2}<R,  \label{4num}
\end{equation}%
and set%
\begin{equation*}
m_{1}=\frac{\mu (B_{r_{1}}\cap \{u<a\})}{\mu \left( B_{r_{1}}\right) }\ \ 
\text{and\ \ }m_{2}=\frac{\mu (B_{r_{2}}\cap \{u<b\})}{\mu \left(
B_{r_{2}}\right) }.
\end{equation*}%
Set also $v:=\left( b-u\right) _{+}$ and%
\begin{equation*}
\widetilde{m}_{1}:=\mu (B_{r_{1}}\cap \{u<a\}),\ \ \widetilde{m}_{2}:=\mu
(B_{r_{2}}\cap \{u<b\}).
\end{equation*}%
Let $\widetilde{B}$ be any intermediate concentric ball between $B_{r_{1}}$
and $B_{r_{2}}$, so that%
\begin{equation*}
B_{r_{1}}\subset \widetilde{B}:=B_{r_{1}+\rho }\subset B_{r_{2}}\text{ }%
(0<\rho <r_{2}-r_{1})\text{.}
\end{equation*}%
Applying condition $(\mathrm{EP})$ to the triple $B_{r_{1}}$, $\widetilde{B}$%
, $B_{r_{2}}$ and the function $v$, we see that there exists some function $%
\phi \in \mathrm{cutoff}(B_{r_{1}},\widetilde{B})$ such that 
\begin{eqnarray}
\mathcal{E}(v\phi ) &\leq &~\frac{3}{2}\mathcal{E}(v,v\phi ^{2})+\frac{C}{%
w\left( x_{0},\rho \right) }\left( \frac{r_{2}}{\rho }\right)
^{C_{0}}\int_{B_{r_{2}}}v^{2}d\mu  \notag \\
&&+3\int_{B_{r_{2}}\times B_{r_{2}}^{c}}v(x)v(y)\phi ^{2}(x)J(dx,dy).
\label{505}
\end{eqnarray}%
Without loss of generality, we can assume that $\phi $ is quasi-continuous.
Then we have%
\begin{equation}
\widetilde{m}_{1}=\int_{B_{r_{1}}\cap \{u<a\}}\phi ^{2}d\mu \leq
\int_{B_{r_{1}}}\phi ^{2}\underset{\geq 1\text{ on }\{u<a\}}{\underbrace{%
\left( \frac{(b-u)_{+}}{b-a}\right) ^{2}}}d\mu =\frac{1}{(b-a)^{2}}%
\int_{B_{r_{1}}}(\phi v)^{2}d\mu .  \label{eq:ma1Up}
\end{equation}%
Consider the set 
\begin{equation*}
E:=\widetilde{B}\cap \{u<b\}.
\end{equation*}%
By the outer regularity of $\mu $, for any $\epsilon >0$ , there is an open
set $\Omega $ such that $E\subset \Omega \subset B_{r_{2}}$ and 
\begin{equation}
\mu (\Omega )\leq \mu (E)+\epsilon \leq \widetilde{m}_{2}+\epsilon .
\label{eq:muOmegaUp}
\end{equation}

On the other hand, since $\phi =0$ q.e. outside $\widetilde{B}$ and $v=0$
outside $\{u<b\}$, we see that $\phi v=0$ q.e. in $E^{c}$. Since $\phi v\in 
\mathcal{F}$ and $\phi v=0$ q.e. in $\Omega ^{c}\subset E^{c}$, we conclude
that 
\begin{equation}
\phi v\in \mathcal{F}(\Omega ).  \label{eq:vphiFOmega}
\end{equation}%
By the definition of $\lambda _{1}\left( \Omega \right) $, we have%
\begin{equation*}
\int_{\Omega }(\phi v)^{2}d\mu \leq \frac{\mathcal{E}(\phi v)}{\lambda
_{1}(\Omega )}.
\end{equation*}%
Using again the fact that $\phi v$ vanishes outside $\Omega $ and combining
this inequality with (\ref{eq:ma1Up}), we obtain that%
\begin{equation}
\widetilde{m}_{1}\leq \frac{1}{(b-a)^{2}}\int_{B_{r_{1}}}(\phi v)^{2}d\mu
\leq \frac{1}{(b-a)^{2}}\int_{\Omega }(\phi v)^{2}d\mu \leq \frac{\mathcal{E}%
(\phi v)}{(b-a)^{2}\lambda _{1}(\Omega )}.  \label{eq:m1<}
\end{equation}%
By condition $(\mathrm{FK})$ and (\ref{eq:muOmegaUp}), 
\begin{equation}
\lambda _{1}(\Omega )\geq \frac{C_{F}^{-1}}{w(B_{r_{2}})}\left( \frac{\mu
(B_{r_{2}})}{\mu (\Omega )}\right) ^{\nu }\geq \frac{C_{F}^{-1}}{w(B_{r_{2}})%
}\left( \frac{\mu (B_{r_{2}})}{\widetilde{m}_{2}+\epsilon }\right) ^{\nu },
\label{eq:la1>}
\end{equation}%
from which, it follows by (\ref{eq:m1<}) that%
\begin{equation*}
\widetilde{m}_{1}\leq \frac{\mathcal{E}(\phi v)}{(b-a)^{2}}\cdot \frac{%
w(B_{r_{2}})}{C_{F}^{-1}}\left( \frac{\mu (B_{r_{2}})}{\widetilde{m}%
_{2}+\epsilon }\right) ^{-\nu }.
\end{equation*}%
Letting $\epsilon \rightarrow 0$, we obtain that, using the fact that $m_{2}=%
\frac{\widetilde{m}_{2}}{\mu (B_{r_{2}})}$, 
\begin{equation}
\widetilde{m}_{1}\leq \frac{C_{F}}{(b-a)^{2}}\left( \frac{\widetilde{m}_{2}}{%
\mu (B_{r_{2}})}\right) ^{\nu }\cdot w(B_{r_{2}})\mathcal{E}(\phi v)=\frac{%
C_{F}\left( m_{2}\right) ^{\nu }}{(b-a)^{2}}\cdot w(B_{r_{2}})\mathcal{E}%
(\phi v),  \label{LG1}
\end{equation}%
where the constants $\nu $ and $C_{F}$ are the same as in condition $(%
\mathrm{FK})$.

We estimate the term $\mathcal{E}(\phi v)$ on the right-hand side of (\ref%
{LG1}) by applying the inequality (\ref{505}). For this, we need to estimate
the term $\mathcal{E}(v,v\phi ^{2})$. This can be done by using the $f$%
-superharmonicity of $u$ and using condition $(\mathrm{TJ})$.

Indeed, since $v\phi \in \mathcal{F}(\Omega )\cap L^{\infty }$ and $\phi \in 
\mathcal{F}\cap L^{\infty }$, the function $v\phi ^{2}=v\phi \cdot \phi \in 
\mathcal{F}(\Omega )\subset \mathcal{F}(B)$, which is non-negative. Let $%
F_{n}$ be given by (\ref{700}) for $n\geq 1$. Since $u$ is $f$-superharmonic
in $B$ and $\Vert F_{n}^{\prime }\Vert _{\infty }\leq 1$ and since the
function $F_{n}^{\prime }(b-u)v\phi ^{2}$ is non-negative and belongs to the
space $\mathcal{F}(\Omega )$ so that it can be used as a test function, we
have 
\begin{align}
\mathcal{E}(b-u,F_{n}^{\prime }(b-u)v\phi ^{2})& =-\mathcal{E}%
(u,F_{n}^{\prime }(b-u)v\phi ^{2})\leq -(f,F_{n}^{\prime }(b-u)v\phi ^{2}) 
\notag \\
& \leq \int_{M}|f|v\phi ^{2}d\mu \leq ||f||_{L^{\infty
}(B_{r_{2}})}\int_{B_{r_{2}}}vd\mu   \notag \\
& \leq ||f||_{L^{\infty }(B_{r_{2}})}b\mu (B_{r_{2}}\cap \{u<b\})~\text{ \
(using $v\leq b1_{\{u<b\}}$)}  \notag \\
& =b||f||_{L^{\infty }(B_{r_{2}})}\widetilde{m}_{2}.  \label{504}
\end{align}%
Applying (\ref{F2}) with $u$ replaced by $b-u$ and with $\varphi =v\phi ^{2}$%
, we obtain by (\ref{504}) 
\begin{equation}
\mathcal{E}(v,v\phi ^{2})=\mathcal{E}((b-u)_{+},v\phi ^{2})\leq
\limsup_{k\rightarrow \infty }\mathcal{E}(b-u,F_{n_{k}}^{\prime }(b-u)v\phi
^{2})\leq b||f||_{L^{\infty }(B_{r_{2}})}\widetilde{m}_{2}.  \label{eq:Evv}
\end{equation}

Therefore, plugging (\ref{eq:Evv}) into (\ref{505}) and then using the facts
that%
\begin{equation*}
\mathrm{supp\,(\phi )}\subset \widetilde{B}\text{ \ and \ }%
J(dx,dy)=J(x,dy)\mu (dx),
\end{equation*}%
we see that%
\begin{align}
\mathcal{E}(v\phi )& \leq \frac{3}{2}b||f||_{L^{\infty }(B_{r_{2}})}%
\widetilde{m}_{2}+\frac{C}{w\left( x_{0},\rho \right) }\left( \frac{r_{2}}{%
\rho }\right) ^{C_{0}}\int_{B_{r_{2}}}v^{2}d\mu  \notag \\
& +3\int_{\widetilde{B}}v\left( x\right) \mu \left( dx\right) \cdot \esup%
_{x\in \widetilde{B}}\int_{B_{r_{2}}^{c}}v(y)J(x,dy).  \label{500}
\end{align}%
Since $v=\left( b-u\right) _{+}\leq b$ in $B_{r_{2}}\subset B_{R}=B$, we have%
\begin{equation*}
\int_{B_{r_{2}}}v^{2}d\mu \leq b^{2}\mu (B_{r_{2}}\cap \{u<b\})=b^{2}%
\widetilde{m}_{2}.
\end{equation*}%
Since $v=\left( b-u\right) _{+}\leq b+u_{-}$ in $M$, we also have%
\begin{eqnarray*}
\int_{\widetilde{B}}v\left( x\right) \mu \left( dx\right) \esup_{x\in 
\widetilde{B}}\int_{B_{r_{2}}^{c}}v(y)J(x,dy) &\leq &b\widetilde{m}_{2}\esup%
_{x\in \widetilde{B}}\int_{B_{r_{2}}^{c}}v(y)J(x,dy) \\
&\leq &b\widetilde{m}_{2}\esup_{x\in \widetilde{B}%
}\int_{B_{r_{2}}^{c}}(b+u_{-}(y))J(x,dy) \\
&=&b\widetilde{m}_{2}\left( b\esup_{x\in \widetilde{B}%
}\int_{B_{r_{2}}^{c}}J(x,dy)+T_{\widetilde{B},B_{r_{2}}}(u_{-})\right) .
\end{eqnarray*}%
Thus, using the fact that for any point $x_{0}$ in $M$ and any $0<\rho
<r_{2}-r_{1}$,%
\begin{equation*}
\frac{w(B_{r_{2}})}{w(x_{0},\rho )}=\frac{w(x_{0},r_{2})}{w(x_{0},\rho )}%
\leq C_{2}\left( \frac{r_{2}}{\rho }\right) ^{\beta _{2}}
\end{equation*}%
by virtue of (\ref{eq:vol_0}), it follows from (\ref{500}) that%
\begin{eqnarray}
\mathcal{E}(v\phi ) &\leq &\frac{3}{2}b||f||_{L^{\infty }(B_{r_{2}})}%
\widetilde{m}_{2}+\frac{CC_{2}}{w(B_{r_{2}})}\left( \frac{r_{2}}{\rho }%
\right) ^{C_{0}+\beta _{2}}\cdot b^{2}\widetilde{m}_{2}  \notag \\
&&+3b\widetilde{m}_{2}\left( b\esup_{x\in \widetilde{B}%
}\int_{B_{r_{2}}^{c}}J(x,dy)+T_{\widetilde{B},B_{r_{2}}}(u_{-})\right) .
\label{507}
\end{eqnarray}%
We look at the third term on the right-hand side of (\ref{507}).

Observing by (\ref{eq:vol_0}) that for any $x\in \widetilde{B}\subset
B_{r_{2}},$%
\begin{equation}
\frac{w(B_{r_{2}})}{w(x,r_{2}-r_{1}-\rho )}=\frac{w(x_{0},r_{2})}{%
w(x,r_{2}-r_{1}-\rho )}\leq C_{2}\left( \frac{r_{2}}{r_{2}-r_{1}-\rho }%
\right) ^{\beta _{2}}\text{,}  \label{508}
\end{equation}%
we have by condition $(\mathrm{TJ})$ that%
\begin{eqnarray}
\esup_{x\in \widetilde{B}}\int_{B_{r_{2}}^{c}}J(x,dy) &\leq &\esup_{x\in 
\widetilde{B}}\int_{B(x,r_{2}-r_{1}-\rho )^{c}}J(x,dy)\leq \esup_{x\in 
\widetilde{B}}\frac{C}{w(x,r_{2}-r_{1}-\rho )}  \notag \\
&\leq &\frac{CC_{2}}{w(B_{r_{2}})}\left( \frac{r_{2}}{r_{2}-r_{1}-\rho }%
\right) ^{\beta _{2}}.  \label{509}
\end{eqnarray}%
Plugging (\ref{509}) into (\ref{507}), we obtain%
\begin{eqnarray*}
\mathcal{E}(v\phi ) &\leq &\frac{3}{2}b||f||_{L^{\infty }(B_{r_{2}})}%
\widetilde{m}_{2}+\frac{CC_{2}}{w(B_{r_{2}})}\left( \frac{r_{2}}{\rho }%
\right) ^{C_{0}+\beta _{2}}\cdot b^{2}\widetilde{m}_{2} \\
&&+3b\widetilde{m}_{2}\left( b\frac{CC_{2}}{w(B_{r_{2}})}\left( \frac{r_{2}}{%
r_{2}-r_{1}-\rho }\right) ^{\beta _{2}}+T_{\widetilde{B},B_{r_{2}}}(u_{-})%
\right)  \\
&\leq &\frac{C^{\prime }b^{2}\widetilde{m}_{2}}{w(B_{r_{2}})}\left( \frac{%
r_{2}}{\rho }\right) ^{C_{0}+\beta _{2}}\left( 1+\frac{w(B_{r_{2}})\left( T_{%
\widetilde{B},B_{r_{2}}}(u_{-})+||f||_{L^{\infty }(B_{r_{2}})}\right) }{b}%
\right) ,
\end{eqnarray*}%
provided that $0<\rho \leq (r_{2}-r_{1})/2$, since in this case%
\begin{equation*}
\left( \frac{r_{2}}{r_{2}-r_{1}-\rho }\right) ^{\beta _{2}}\leq \left( \frac{%
r_{2}}{\rho }\right) ^{\beta _{2}}\leq \left( \frac{r_{2}}{\rho }\right)
^{C_{0}+\beta _{2}}.
\end{equation*}%
From this, we obtain by (\ref{LG1}) that%
\begin{equation*}
\widetilde{m}_{1}\leq C^{\prime }C_{F}\,m_{2}^{\nu }\,\widetilde{m}%
_{2}\left( \frac{b}{b-a}\right) ^{2}\left( \frac{r_{2}}{\rho }\right)
^{C_{0}+\beta _{2}}\left( 1+\frac{w(B_{r_{2}})\left( T_{\widetilde{B}%
,B_{r_{2}}}(u_{-})+||f||_{L^{\infty }(B_{r_{2}})}\right) }{b}\right) .
\end{equation*}%
Dividing this inequality by $\mu \left( B_{r_{1}}\right) $ and then using
the facts that%
\begin{equation*}
m_{1}=\frac{\widetilde{m}_{1}}{\mu (B_{r_{1}})}\text{ \ and}\ \ m_{2}=\frac{%
\widetilde{m}_{2}}{\mu (B_{r_{2}})}
\end{equation*}%
and that, for any $\frac{r_{2}}{2}\leq r_{1}<r_{2}$, 
\begin{equation*}
\frac{\mu (B_{r_{2}})}{\mu (B_{r_{1}})}\leq C_{\mu }\left( \frac{r_{2}}{r_{1}%
}\right) ^{d_{2}}\leq C_{\mu }\left( \frac{r_{2}}{r_{2}-r_{1}}\right)
^{d_{2}}\leq C_{\mu }\left( \frac{r_{2}}{\rho }\right) ^{d_{2}}\text{ (by
using (\ref{eq:vol_3}))}
\end{equation*}%
we conclude that, for all $0<\rho \leq (r_{2}-r_{1})/2$ with $\frac{r_{2}}{2}%
\leq r_{1}<r_{2}$, 
\begin{align}
m_{1}& \leq C^{\prime }C_{F}m_{2}^{1+\nu }\left( \frac{b}{b-a}\right) ^{2}%
\frac{\mu (B_{r_{2}})}{\mu (B_{r_{1}})}\left( \frac{r_{2}}{\rho }\right)
^{C_{0}+\beta _{2}}\left( 1+\frac{w(B_{r_{2}})\left( T_{\widetilde{B}%
,B_{r_{2}}}(u_{-})+||f||_{L^{\infty }(B_{r_{2}})}\right) }{b}\right)   \notag
\\
& \leq C\left( \frac{b}{b-a}\right) ^{2}\left( \frac{r_{2}}{\rho }\right)
^{C_{0}+\beta _{2}+d_{2}}\left( 1+\frac{w(B_{r_{2}})\left( T_{B_{r_{1}+\rho
},B_{r_{2}}}(u_{-})+||f||_{L^{\infty }(B_{r_{2}})}\right) }{b}\right) \cdot
m_{2}^{1+\nu },  \label{eq:tildem1m2}
\end{align}%
where $C:=C^{\prime }C_{F}C_{\mu }>0$ depends only on the constants from the
hypotheses (but is independent of the numbers $\rho ,a,b,r_{1},r_{2}$ and
the functions $f,u$). We will apply (\ref{eq:tildem1m2}) to show (\ref%
{eq:vol_317}).

In fact, let $\delta ,\varepsilon $ be any two fixed numbers in $(0,1)$.
Consider the following sequences%
\begin{equation*}
R_{k}:=\left( \delta +2^{-k}(1-\delta )\right) R\quad \text{ and }\quad
a_{k}:=\left( \varepsilon +2^{-k}(1-\varepsilon )\right) a\text{ \ for }%
k\geq 0.
\end{equation*}%
Clearly, $R_{0}=R$, $a_{0}=a$, $R_{k}\searrow \delta R$, and $a_{k}\searrow
\varepsilon a$ as $k\rightarrow \infty $, and%
\begin{equation*}
\frac{R_{k-1}}{2}<R_{k}<R_{k-1}\text{ \ for any }k\geq 1.
\end{equation*}%
Set%
\begin{equation*}
m_{k}:=\frac{\mu (B_{R_{k}}\cap \{u<a_{k}\})}{\mu (B_{R_{k}})}.
\end{equation*}%
Applying (\ref{eq:tildem1m2}) with%
\begin{align*}
& a=a_{k}\text{, \ }b=a_{k-1}\text{, \ }r_{1}=R_{k}\text{, \ }r_{2}=R_{k-1}%
\text{ \ and} \\
& \rho =\rho _{k}:=(R_{k-1}-R_{k})/2=2^{-k-1}(1-\delta )R,
\end{align*}%
we obtain for all $k\geq 1$%
\begin{equation}
m_{k}\leq CA_{k}\left( \frac{a_{k-1}}{a_{k-1}-a_{k}}\right) ^{2}\left( \frac{%
R_{k-1}}{R_{k-1}-R_{k}}\right) ^{C_{0}+\beta _{2}+d_{2}}m_{k-1}^{1+\nu },
\label{72}
\end{equation}%
where $A_{k}$ is given by 
\begin{equation*}
A_{k}:=1+\frac{w(B_{R_{k-1}})\left( T_{B_{R_{k}+\rho
_{k}},B_{R_{k-1}}}(u_{-})+||f||_{L^{\infty }(B_{R_{k-1}})}\right) }{a_{k-1}}.
\end{equation*}%
Since $B_{R_{k}+\rho _{k}}\subset B_{(3+\delta )R/4}$ for any $k\geq 1$\ by
noting that 
\begin{equation*}
R_{k}+\rho _{k}=\left( \delta +2^{-k}(1-\delta )\right) R+2^{-k-1}(1-\delta
)R\leq \frac{(3+\delta )R}{4}
\end{equation*}%
and since $u_{-}=0$ in $B=B_{R}\supseteq B_{R_{k-1}}$ by using the fact that 
$u$ is non-negative, we have%
\begin{equation*}
T_{B_{R_{k}+\rho _{k}},B_{R_{k-1}}}(u_{-})=T_{B_{R_{k}+\rho
_{k}},B_{R}}(u_{-})\leq T_{B_{(3+\delta )R/4},B_{R}}(u_{-}).
\end{equation*}%
Since $a_{k-1}\geq \varepsilon a$ and $w(B_{R_{k-1}})\leq w(B)$, it follows
that%
\begin{equation}
A_{k}\leq 1+\frac{w(B)\left( T_{B_{(3+\delta
)R/4},B}(u_{-})+||f||_{L^{\infty }(B)}\right) }{\varepsilon a}\eqqcolon A%
\text{ \ for any }k\geq 1.  \label{71}
\end{equation}%
Since for any $k\geq 1$%
\begin{equation*}
\frac{a_{k-1}}{a_{k-1}-a_{k}}=\frac{\varepsilon +2^{-\left( k-1\right)
}(1-\varepsilon )}{\left( 2^{-\left( k-1\right) }-2^{-k}\right)
(1-\varepsilon )}\leq \frac{2^{k}}{1-\varepsilon }\text{ \ and \ }\frac{%
R_{k-1}}{R_{k-1}-R_{k}}\leq \frac{2^{k}}{1-\delta },
\end{equation*}%
we obtain from (\ref{72}), (\ref{71}) that%
\begin{equation}
m_{k}\leq CA\left( \frac{2^{k}}{1-\varepsilon }\right) ^{2}\left( \frac{2^{k}%
}{1-\delta }\right) ^{C_{0}+\beta _{2}+d_{2}}m_{k-1}^{1+\nu }\eqqcolon %
DA\cdot 2^{\lambda k}\cdot m_{k-1}^{q},  \label{73}
\end{equation}%
where the constants $D$, $\lambda $, $q$ are respectively given by 
\begin{equation}
D:=C(1-\varepsilon )^{-2}(1-\delta )^{-(C_{0}+\beta _{2}+d_{2})}\text{, \ \ }%
\lambda :=2+C_{0}+\beta _{2}+d_{2}\text{ \ \ and \ \ }q:=1+\nu .  \label{74}
\end{equation}%
Iterating the inequality (\ref{73}), we have for all $k\geq 1$ 
\begin{align*}
m_{k}\leq & ~(DA)\cdot 2^{\lambda k}\cdot m_{k-1}^{q}\leq (DA)\cdot
2^{\lambda k}\cdot \left( DA\cdot 2^{\lambda (k-1)}\cdot m_{k-2}^{q}\right)
^{q} \\
=& ~(DA)^{1+q}\cdot 2^{\lambda k+\lambda q(k-1)}\cdot m_{k-2}^{q^{2}}\leq
\cdots \\
\leq & ~(DA)^{1+q+\cdots +q^{k-1}}\cdot 2^{\lambda (k+q(k-1)+\cdots
+q^{k-1})}\cdot m_{0}^{q^{k}}.
\end{align*}%
Since 
\begin{align*}
& k+q(k-1)+\cdots +q^{k-1}=\frac{q^{k+1}-(k+1)q+k}{(q-1)^{2}}\leq \frac{q}{%
\left( q-1\right) ^{2}}q^{k}, \\
& 1+q+\cdots +q^{k-1}=\frac{q^{k}-1}{q-1}\leq \frac{q^{k}}{q-1},
\end{align*}%
it follows that 
\begin{equation}
m_{k}\leq \Big((DA)^{\frac{1}{q-1}}\cdot 2^{\frac{\lambda q}{\left(
q-1\right) ^{2}}}\cdot m_{0}\Big)^{q^{k}},  \label{73-1}
\end{equation}%
from which, we conclude that if 
\begin{equation}
2^{\frac{\lambda q}{\left( q-1\right) ^{2}}}\cdot (DA)^{\frac{1}{q-1}}\cdot
m_{0}\leq \frac{1}{2},  \label{eq:tildem0small}
\end{equation}%
then 
\begin{equation}
\lim_{k\rightarrow \infty }m_{k}=0  \label{eq:limitmk}
\end{equation}%
by using the fact that $q>1$. Note that (\ref{eq:tildem0small}) is
equivalent to%
\begin{equation*}
m_{0}\leq 2^{-\frac{\lambda q}{\left( q-1\right) ^{2}}-1}\cdot (DA)^{-\frac{1%
}{q-1}},
\end{equation*}%
that is, 
\begin{eqnarray}
\frac{\mu (B\cap \{u<a\})}{\mu (B)} &=&m_{0}\leq 2^{-\frac{\lambda q}{\left(
q-1\right) ^{2}}-1}D^{-\frac{1}{q-1}}A^{-1/\nu }  \notag \\
&\eqqcolon&\varepsilon _{0}(1-\varepsilon )^{2\theta }(1-\delta
)^{C_{L}\theta }\left( 1+\frac{w(B)\left( T_{\frac{3+\delta }{4}%
B,B}(u_{-})+||f||_{L^{\infty }(B)}\right) }{\varepsilon a}\right) ^{-\theta
},  \label{75}
\end{eqnarray}%
where $\varepsilon _{0},\theta ,C_{L}$ are universal constants given by 
\begin{equation}
\varepsilon _{0}:=2^{-\lambda q/\left( q-1\right) ^{2}-1}C^{-1/(q-1)}<1/2%
\text{, \ }\theta :=1/\nu \text{, \ and \ }C_{L}:=C_{0}+\beta _{2}+d_{2},
\label{eq:epsdef}
\end{equation}%
since by (\ref{74})%
\begin{eqnarray*}
2^{-\frac{\lambda q}{\left( q-1\right) ^{2}}-1}D^{-\frac{1}{q-1}} &=&2^{-%
\frac{\lambda q}{\left( q-1\right) ^{2}}-1}\left( C(1-\varepsilon
)^{-2}(1-\delta )^{-(C_{0}+\beta _{2}+d_{2})}\right) ^{-\frac{1}{q-1}} \\
&=&\varepsilon _{0}\left( (1-\varepsilon )^{2}(1-\delta )^{C_{0}+\beta
_{2}+d_{2}}\right) ^{1/\nu }.
\end{eqnarray*}%
Note that the constants $\varepsilon _{0},\theta ,C_{L}$ are all universal,
all of which are independent of the numbers $\varepsilon ,\delta ,$ the ball 
$B$ and the functions $f,u$.

The inequality (\ref{75}) is just the hypothesis (\ref{eq:vol_317}). With a
choice of $\varepsilon _{0},\theta ,C_{L}$ in (\ref{eq:epsdef}), the
assumption (\ref{eq:tildem0small}) is satisfied, and hence, we have (\ref%
{eq:limitmk}). Therefore, 
\begin{equation*}
\frac{\mu (\delta B\cap \{u<\varepsilon a\})}{\mu (\delta B)}=0,
\end{equation*}%
thus showing that (\ref{eq:a/2}) is true.

Finally, the implication (\ref{76}) follows directly from (\ref{eq:ABB->EP-1}%
) and (\ref{60}). The proof is complete.
\end{proof}

\section[wehi]{Proof of weak elliptic Harnack inequality}

\label{sec:pf}In this section, we prove Theorem \ref{thm:M1}.

\begin{proposition}
\label{prop:3.1} If $v\in \mathcal{F^{\prime }}$ and $v\geq 0$ in $B_{R}$
with $0<R<\overline{R}$, then 
\begin{equation}
T_{\frac{3}{4}B,B}(v_{-})\leq T_{\frac{3}{4}B_{R},B_{R}}(v_{-})  \label{tj3}
\end{equation}%
for any $B\subset \frac{3}{4}B_{R}$, where $T_{U,\Omega }(v)$ is defined by (%
\ref{tj2}).
\end{proposition}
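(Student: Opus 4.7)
The plan is to prove this via a direct unpacking of the definition of $T_{U,\Omega}$ in \eqref{tj2}, exploiting the single observation that $v_{-}$ vanishes on $B_R$.

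First, I would record the hypothesis-driven simplification of the inner integral. Since $v\geq 0$ in $B_R$, we have $v_{-}=0$ on $B_R$. Because $B\subset \tfrac{3}{4}B_R\subset B_R$, the set $B^c$ decomposes as the disjoint union $B_R^c\sqcup (B_R\setminus B)$, and the integral over $B_R\setminus B$ against $v_{-}$ vanishes. Hence, for every $x\in M$,
\begin{equation*}
\int_{B^c}v_{-}(y)\,J(x,dy)=\int_{B_R^c}v_{-}(y)\,J(x,dy)+\int_{B_R\setminus B}v_{-}(y)\,J(x,dy)=\int_{B_R^c}v_{-}(y)\,J(x,dy).
\end{equation*}

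Next, I would use the inclusion of essential-supremum domains. From $B\subset \tfrac{3}{4}B_R$ together with $\tfrac{3}{4}B\subset B$, one has $\tfrac{3}{4}B\subset \tfrac{3}{4}B_R$, so enlarging the set over which the $\esup$ is taken can only increase the value. Combining this with the previous identity yields
\begin{equation*}
T_{\frac{3}{4}B,B}(v_{-})=\esup_{x\in \frac{3}{4}B}\int_{B_R^c}v_{-}(y)\,J(x,dy)\leq \esup_{x\in \frac{3}{4}B_R}\int_{B_R^c}v_{-}(y)\,J(x,dy)=T_{\frac{3}{4}B_R,B_R}(v_{-}),
\end{equation*}
which is exactly \eqref{tj3}.

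There is no real obstacle here: the statement is essentially a monotonicity remark expressing that, for non-negative $v$ on $B_R$, the tail quantity $T_{\frac{3}{4}B,B}(v_{-})$ actually only depends on $v$ outside $B_R$, and so is dominated by the analogous tail at the larger scale. The only mild point to be attentive to is the chain of inclusions $\tfrac{3}{4}B\subset B\subset \tfrac{3}{4}B_R\subset B_R$, which justifies both that the inner integral can be rewritten over $B_R^c$ and that the $\esup$ is taken over a smaller set on the left.
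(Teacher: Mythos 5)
Your proof is correct and follows essentially the same route as the paper's: both rest on the observation that $v_{-}=0$ on $B_{R}$ (so the inner integral over $B^{c}$ reduces to one over $B_{R}^{c}$) and then enlarge the domain of the essential supremum via $\tfrac{3}{4}B\subset B\subset \tfrac{3}{4}B_{R}$. The only cosmetic difference is that the paper inserts the intermediate bound $\esup_{x\in B}$ before passing to $\esup_{x\in \frac{3}{4}B_{R}}$, which you skip by going directly from $\tfrac{3}{4}B$ to $\tfrac{3}{4}B_{R}$.
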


\begin{proof}
Since $v\geq 0$ in $B_{R}$, we see that $v_{-}=0$ in $B_{R}$, and hence, 
\begin{align*}
T_{\frac{3}{4}B,B}(v_{-})& =\esup_{x\in \frac{3}{4}B}%
\int_{B^{c}}v_{-}(y)J(x,dy)=\esup_{x\in \frac{3}{4}B}%
\int_{B_{R}^{c}}v_{-}(y)J(x,dy) \\
& \leq \esup_{x\in B}\int_{B_{R}^{c}}v_{-}(y)J(x,dy)\leq \esup_{x\in \frac{3%
}{4}B_{R}}\int_{B_{R}^{c}}v_{-}(y)J(x,dy) \\
& =T_{\frac{3}{4}B_{R},B_{R}}(v_{-}),
\end{align*}%
thus showing (\ref{tj3}).
\end{proof}

We remark that an alternative version of the tail for a function $v$ outside
a ball $B(x_{0},R)$\ is defined in \cite{ChenKumagaiWang.2016.EH} by%
\begin{equation}
\text{Tail}_{w}(v;x_{0},R):=\int_{B(x_{0},R)^{c}}\frac{|v(z)|}{%
V(x_{0},d(x_{0},z))w(x_{0},d(x_{0},z))}\mu (dz).  \label{62}
\end{equation}%
If \emph{condition} $(\mathrm{J}_{\leq })$ holds, that is, if $%
J(dx,dy)=J(x,y)\mu (dx)\mu (dy)$ for a non-negative function $J(x,y)$ with 
\begin{equation}
J(x,y)\leq \frac{C}{V(x,d(x,y))w(x,d(x,y))}\text{ }  \label{63}
\end{equation}%
for any $(x,y)$ in $M\times M\setminus \mathrm{diag}$, for some constant $%
C\geq 0$, then for any function $v$ and any ball $B_{R}:=B(x_{0},R)$ with $%
0<R<\overline{R}$, 
\begin{equation}
T_{\frac{3}{4}B_{R},B_{R}}(v)\leq C^{\prime }\text{\thinspace Tail}%
_{w}(v;x_{0},R)  \label{tj4}
\end{equation}%
for a constant $C^{\prime }>0$ independent of $B_{R},v$.

Indeed, for any two points $x\in \frac{3}{4}B_{R}$ and $y\in B_{R}^{c}$,
since $d(x,y)\geq \frac{R}{4}$ and $d(x_{0},x)\leq \frac{3}{4}R$, it follows
by using (\ref{eq:vol_3}) and the triangle inequality that 
\begin{align}
\frac{V(x_{0},d(x_{0},y))}{V(x,d(x,y))}& \leq \frac{%
V(x_{0},d(x_{0},x)+d(x,y))}{V(x,d(x,y))}\leq C_{\mu }\left( \frac{%
d(x_{0},x)+d(x_{0},x)+d(x,y)}{d(x,y)}\right) ^{d_{2}}  \notag \\
& =C_{\mu }\left( 1+\frac{2d(x_{0},x)}{d(x,y)}\right) ^{d_{2}}\leq C_{\mu
}\left( 1+\frac{2\cdot \frac{3}{4}R}{\frac{1}{4}R}\right) ^{d_{2}}=C_{\mu
}7^{d_{2}},  \label{eq:vol_351}
\end{align}%
whilst by using (\ref{eq:vol_0}), 
\begin{align}
\frac{w(x_{0},d(x_{0},y))}{w(x,d(x,y))}& \leq \frac{%
w(x_{0},d(x_{0},x)+d(x,y))}{w(x,d(x,y))}\leq C_{2}\left( \frac{%
d(x_{0},x)+d(x,y)}{d(x,y)}\right) ^{\beta _{2}}  \notag \\
& =C_{2}\left( 1+\frac{d(x_{0},x)}{d(x,y)}\right) ^{\beta _{2}}\leq
C_{2}\left( 1+\frac{\frac{3}{4}R}{\frac{1}{4}R}\right) ^{\beta
_{2}}=C_{2}4^{\beta _{2}}.  \label{eq:vol_352}
\end{align}%
Therefore, by (\ref{63}), (\ref{eq:vol_351}), (\ref{eq:vol_352}) 
\begin{align*}
T_{\frac{3}{4}B_{R},B_{R}}(v)& =\esup_{x\in \frac{3}{4}B_{R}}%
\int_{B_{R}^{c}}|v(y)|J(x,dy)\leq \esup_{x\in \frac{3}{4}B_{R}}%
\int_{B_{R}^{c}}\frac{C|v(y)|}{V(x,d(x,y))w(x,d(x,y))}\mu (dy) \\
& \leq \int_{B_{R}^{c}}\frac{C(C_{\mu }7^{d_{2}})(C_{2}4^{\beta _{2}})|v(y)|%
}{V(x_{0},d(x_{0},y))w(x_{0},d(x_{0},y))}\mu (dy)=C^{\prime }\text{Tail}%
_{w}(v;x_{0},R),
\end{align*}%
thus showing (\ref{tj4}).

The inequality (\ref{tj4}) says that the tail of a function $v$ defined in
this paper is slightly weaker than that defined in \cite%
{ChenKumagaiWang.2016.EH}, and therefore, so is the weak elliptic Harnack
inequality introduced in Definition \ref{df:wehi1}.

\begin{proposition}
\label{prop:3.2} If $u\in \mathcal{F^{\prime }}\cap L^{\infty }$ and $%
\lambda >0$, then $\ln (u_{+}+\lambda )\in \mathcal{F^{\prime }}\cap
L^{\infty }$.
\end{proposition}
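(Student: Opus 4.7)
The plan is to reduce the claim to the standard fact that normal contractions preserve $\mathcal{F}$. Since $u\in \mathcal{F}^{\prime}$, write $u=v+a$ with $v\in\mathcal{F}$ and $a\in\mathbb{R}$. Boundedness is immediate: because $0\leq u_{+}+\lambda\leq \|u\|_{L^{\infty}}+\lambda$ and $u_{+}+\lambda\geq\lambda>0$, the function $\ln(u_{+}+\lambda)$ lies in $L^{\infty}$ with
\begin{equation*}
\|\ln(u_{+}+\lambda)\|_{L^{\infty}}\leq |\ln \lambda|+\ln(\|u\|_{L^{\infty}}+\lambda).
\end{equation*}

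For the $\mathcal{F}^{\prime}$ membership, define $H:\mathbb{R}\to\mathbb{R}$ by $H(t):=\ln(t_{+}+\lambda)$. The key step is to verify that $H$ is Lipschitz with constant $1/\lambda$. Indeed, using that the positive part is $1$-Lipschitz and the elementary estimate for $\ln$, for all $s,t\in\mathbb{R}$,
\begin{equation*}
|H(s)-H(t)|=|\ln(s_{+}+\lambda)-\ln(t_{+}+\lambda)|\leq \frac{|s_{+}-t_{+}|}{\min(s_{+}+\lambda,\,t_{+}+\lambda)}\leq \frac{|s-t|}{\lambda}.
\end{equation*}

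Now set $\widetilde{G}(t):=\lambda\bigl(H(t+a)-H(a)\bigr)$. Then $\widetilde{G}(0)=0$ and $|\widetilde{G}(s)-\widetilde{G}(t)|\leq |s-t|$, so $\widetilde{G}$ is a normal contraction. By the standard Markov property for Dirichlet forms (see for instance \cite[Theorem 1.4.1]{FukushimaOshimaTakeda.2011.489}), $\widetilde{G}(v)\in \mathcal{F}$. Since $\mathcal{F}$ is a vector space, dividing by the constant $\lambda$ gives
\begin{equation*}
H(v+a)-H(a)=\ln(u_{+}+\lambda)-\ln(a_{+}+\lambda)\in \mathcal{F},
\end{equation*}
and therefore $\ln(u_{+}+\lambda)\in \mathcal{F}+\mathbb{R}=\mathcal{F}^{\prime}$, which completes the proof.

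The only real work is the Lipschitz estimate for $H$; everything else is bookkeeping around the decomposition $u=v+a$ and the invoking of the normal contraction principle, so I do not foresee any serious obstacle.
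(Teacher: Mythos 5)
Your proof is correct, and its core is the same as the paper's: both arguments hinge on the Lipschitz estimate $|H(s)-H(t)|\leq |s-t|/\lambda$ for $H(t)=\ln(t_{+}+\lambda)$. Where you differ is in the final step. The paper, having checked Lipschitz continuity, simply cites a general proposition from the forthcoming reference \cite{ghh21TP} (see also \cite[Proposition A.2]{GrigoryanHuHu.2018.AM433}) to the effect that Lipschitz functions map $\mathcal{F}^{\prime}\cap L^{\infty}$ into $\mathcal{F}^{\prime}$. You instead unpack that step from first principles: writing $u=v+a$ with $v\in\mathcal{F}$, you normalize to $\widetilde{G}(t)=\lambda\bigl(H(t+a)-H(a)\bigr)$, which satisfies $\widetilde{G}(0)=0$ and is $1$-Lipschitz, so that $\widetilde{G}(v)$ is a normal contraction of $v$ and lies in $\mathcal{F}$ by \cite[Theorem 1.4.1]{FukushimaOshimaTakeda.2011.489}; dividing by $\lambda$ and adding back the constant $H(a)$ yields $\ln(u_{+}+\lambda)\in\mathcal{F}^{\prime}$. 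This is the same normalization device the paper uses in the proof of Proposition \ref{P21} for $F_{n}(v+a)-F_{n}(a)$, and making it explicit here has the modest advantage of keeping the proof self-contained rather than resting on an unpublished reference.

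One cosmetic slip: the displayed bound $\Vert \ln(u_{+}+\lambda)\Vert_{L^{\infty}}\leq |\ln\lambda|+\ln(\Vert u\Vert_{L^{\infty}}+\lambda)$ is false when $\Vert u\Vert_{L^{\infty}}+\lambda<1$ (take $u\equiv 0$ and $\lambda<1$, where the left side is $|\ln\lambda|$ but the right side is $0$). The correct elementary bound, immediate from $\lambda\leq u_{+}+\lambda\leq\Vert u\Vert_{L^{\infty}}+\lambda$, is $\max\{|\ln\lambda|,\ |\ln(\Vert u\Vert_{L^{\infty}}+\lambda)|\}$. This does not affect the validity of the conclusion that $\ln(u_{+}+\lambda)\in L^{\infty}$.
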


\begin{proof}
For $s\in \mathbb{R}$, we define 
\begin{equation*}
F(s)=\ln (s_{+}+\lambda ).
\end{equation*}%
Since $u\in L^{\infty }$, we see that $F(u)\in L^{\infty }$. For any $%
s_{1},s_{2}\in \mathbb{R}$, we assume $(s_{1})_{+}\geq (s_{2})_{+}$ without
loss of generality. Then 
\begin{equation*}
|F(s_{1})-F(s_{2})|=\ln \left( 1+\frac{(s_{1})_{+}-(s_{2})_{+}}{%
(s_{2})_{+}+\lambda }\right) \leq \frac{(s_{1})_{+}-(s_{2})_{+}}{%
(s_{2})_{+}+\lambda }\leq \frac{|s_{1}-s_{2}|}{\lambda }
\end{equation*}%
by using the elementary inequality 
\begin{equation*}
\ln (1+x)\leq x\text{ \ for any }x\geq 0.
\end{equation*}%
Thus, $F$ is Lipschitz on $\mathbb{R}$. Therefore, by \cite{ghh21TP} (see
also \cite[Proposition A.2 in Appendix]{GrigoryanHuHu.2018.AM433} for a
purely jump Dirichlet form), we conclude that 
\begin{equation*}
F(u)\in \mathcal{F^{\prime }},
\end{equation*}%
thus showing that 
\begin{equation*}
F(u)=\ln (u_{+}+\lambda )\in \mathcal{F^{\prime }}\cap L^{\infty }.
\end{equation*}%
The proof is complete.
\end{proof}

The following will be used shortly.

\begin{proposition}
\label{lemma:3.3} (see \cite[Lemma 3.7]{GrigoryanHuHu.2018.AM433}) Let a
function $u\in \mathcal{F^{\prime }}\cap L^{\infty }$ be non-negative in an
open set $B\subset M$ and $\phi \in \mathcal{F^{\prime }}\cap L^{\infty }$
be such that $\phi =0$ in $B^{c}$. Let $\lambda >0$ and set $u_{\lambda
}:=u+\lambda $. Then we have $\phi ^{2}u_{\lambda }^{-1}\in \mathcal{F}(B)$
and 
\begin{eqnarray*}
\mathcal{E}^{(J)}\Big(u,\phi ^{2}u_{\lambda }^{-1}\Big) &\leq &-\frac{1}{2}%
\iint_{B\times B}(\phi ^{2}(x)\wedge \phi ^{2}(y))\Big|\ln \frac{u_{\lambda
}(y)}{u_{\lambda }(x)}\Big|^{2}J(dx,dy) \\
&&+3\mathcal{E}^{(J)}(\phi ,\phi )-2\iint_{B\times B^{c}}u_{\lambda }(y)%
\frac{\phi ^{2}(x)}{u_{\lambda }(x)}J(dx,dy).
\end{eqnarray*}
\end{proposition}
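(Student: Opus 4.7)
The plan is to prove the pointwise version of the inequality and then integrate. To this end, I would first expand the bilinear form as
\[
\mathcal{E}^{(J)}(u,\phi^{2}u_{\lambda}^{-1})=\iint_{M\times M}(u_{\lambda}(x)-u_{\lambda}(y))\left(\frac{\phi^{2}(x)}{u_{\lambda}(x)}-\frac{\phi^{2}(y)}{u_{\lambda}(y)}\right)J(dx,dy),
\]
using $u(x)-u(y)=u_{\lambda}(x)-u_{\lambda}(y)$. Before working with this expression I would verify that $\phi^{2}u_{\lambda}^{-1}\in\mathcal{F}(B)$: since $\phi=0$ on $B^{c}$ and $u_{\lambda}\geq\lambda>0$ on $B$ (because $u\geq0$ there), the function $1/u_{\lambda}$ is bounded and Lipschitz in $u$ on $\{\phi\neq0\}$, so the standard chain/Leibniz rules for regular Dirichlet forms yield $\phi^{2}u_{\lambda}^{-1}\in\mathcal{F}$ with support in $\overline{B}$. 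Then I would split the domain of integration into $B\times B$, $B\times B^{c}$, $B^{c}\times B$, and $B^{c}\times B^{c}$; the last piece is zero because $\phi^{2}u_{\lambda}^{-1}$ vanishes on $B^{c}$, and the two cross terms are equal by symmetry of $J$ and together contribute $2\iint_{B\times B^{c}}(\phi^{2}(x)-u_{\lambda}(y)\phi^{2}(x)/u_{\lambda}(x))J(dx,dy)$.

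The heart of the argument is the pointwise algebraic inequality: for $a,b>0$ and $s,t\in\mathbb{R}$,
\[
(a-b)\left(\frac{s^{2}}{a}-\frac{t^{2}}{b}\right)\leq -\frac{1}{2}(s^{2}\wedge t^{2})\Bigl(\ln\frac{a}{b}\Bigr)^{2}+3(s-t)^{2}.
\]
I would prove this by setting $r:=a/b$ and rewriting the left-hand side as $\frac{(r-1)(s^{2}-t^{2})}{r}-\frac{(r-1)^{2}t^{2}}{r}$ after normalizing so that $t^{2}=s^{2}\wedge t^{2}$ (the case $s^{2}\leq t^{2}$ with $r\geq 1$ is handled analogously, and in that case the inequality is immediate because $(r-1)(s^{2}-t^{2})/r\leq0$). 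Using the elementary inequality $(r-1)^{2}/r\geq(\ln r)^{2}$ for $r>0$ (easy to check via calculus) controls the log contribution; the mixed term is handled by the decomposition $s^{2}-t^{2}=(s-t)^{2}+2t(s-t)$ together with Young's inequality with parameter $\epsilon=1/2$:
\[
2|t(r-1)(s-t)|/r\leq\tfrac{1}{2}t^{2}(r-1)^{2}/r+2(s-t)^{2}/r.
\]
Adding these and absorbing half of the $t^{2}(r-1)^{2}/r$ term yields the claimed pointwise bound (with the coefficient $3$ arising as $1+2$ from the $(r-1)/r$ and $2/r$ factors, each bounded by their respective constants since $r\geq 1$).

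Integrating the pointwise inequality over $B\times B$ and combining with the cross-term contribution, I would use
\[
\mathcal{E}^{(J)}(\phi,\phi)=\iint_{B\times B}(\phi(x)-\phi(y))^{2}J(dx,dy)+2\iint_{B\times B^{c}}\phi^{2}(x)J(dx,dy)
\]
(because $\phi=0$ on $B^{c}$) to absorb both the $3\iint_{B\times B}(\phi(x)-\phi(y))^{2}J$ term coming from the pointwise bound and the $2\iint_{B\times B^{c}}\phi^{2}(x)J$ term from the cross region into a single $3\mathcal{E}^{(J)}(\phi,\phi)$. The remaining negative cross term $-2\iint_{B\times B^{c}}u_{\lambda}(y)\phi^{2}(x)/u_{\lambda}(x)J(dx,dy)$ is kept intact, matching the statement. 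The main obstacle is locating the right decomposition in the pointwise inequality so that the constants $-1/2$ and $3$ come out cleanly; the technical but routine part is the justification that $\phi^{2}u_{\lambda}^{-1}\in\mathcal{F}(B)$ at the level of regular Dirichlet forms.
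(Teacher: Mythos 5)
The paper does not actually prove this proposition; it is quoted verbatim from \cite[Lemma 3.7]{GrigoryanHuHu.2018.AM433}, so there is no in-paper proof to compare against. Your argument is the standard one for this ``logarithmic lemma'' and it is correct: the symmetric expansion of $\mathcal{E}^{(J)}$, the splitting into $B\times B$ plus the two cross regions, the pointwise inequality $(a-b)\bigl(\frac{s^2}{a}-\frac{t^2}{b}\bigr)\leq-\frac12(s^2\wedge t^2)(\ln\frac ab)^2+3(s-t)^2$ (reduced by the swap symmetry $(a,s)\leftrightarrow(b,t)$ to $a\geq b$, then settled via $(r-1)^2/r\geq(\ln r)^2$ and Young's inequality exactly as you describe), and the absorption of $3\iint_{B\times B}(\phi(x)-\phi(y))^2\,J+2\iint_{B\times B^c}\phi^2(x)\,J$ into $3\mathcal{E}^{(J)}(\phi,\phi)$ all check out, and the constants $-\tfrac12$ and $3$ come out as claimed. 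The only point I would tighten is the membership $\phi^2u_\lambda^{-1}\in\mathcal{F}(B)$: since $u$ is only assumed non-negative \emph{in} $B$, the function $u_\lambda$ need not be positive (or bounded away from $0$) on $B^c$, so ``$1/u_\lambda$ is Lipschitz in $u$ on $\{\phi\neq0\}$'' does not directly give a global composition; the clean fix is to write $\phi^2u_\lambda^{-1}=\phi^2\,G(u)$ with $G(s)=((s\vee0)+\lambda)^{-1}$ globally bounded and Lipschitz, note $G(u)\in\mathcal{F}'\cap L^\infty$, and then use that the product with $\phi^2$ lies in $\mathcal{F}$ and vanishes q.e.\ off $B$.
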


We show the following \emph{crossover lemma}.

\begin{lemma}[the crossover lemma]
\label{thm:M4} Assume that conditions $(\mathrm{VD})$, $(\mathrm{Cap}_{\leq
})$ and $(\mathrm{PI})$ are satisfied. Let $u\in \mathcal{F^{\prime }}\cap
L^{\infty }$ be $f$-superharmonic and non-negative in a ball $%
B_{R}:=B(x_{0},R)$ with radius less than $\overline{R}$. If 
\begin{equation}
\lambda \geq w(B_{R})\Big(T_{\frac{3}{4}B_{R},B_{R}}(u_{-})+||f||_{L^{\infty
}(B_{R})}\Big),  \label{12}
\end{equation}%
then we have 
\begin{equation}
\left( \fint_{B_{r}}u_{\lambda }^{p}d\mu \right) ^{1/p}\left( \fint%
_{B_{r}}u_{\lambda }^{-p}d\mu \right) ^{1/p}\leq C\text{ \ where }u_{\lambda
}=u+\lambda  \label{eq:vol_316}
\end{equation}%
for any $B_{r}:=B(x_{0},r)$ with $0<r\leq \frac{R}{16(4\kappa +1)}$, where $%
C>0,p\in (0,1)$ are two constants independent of $B_{R},r,u,f$, and the
constant $\kappa \geq 1$ comes from condition $(\mathrm{PI})$.
\end{lemma}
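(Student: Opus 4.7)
My plan follows the classical Moser strategy adapted to the mixed setting: I shall show that $g := \ln u_{\lambda }$ belongs to $\mathrm{BMO}(\Omega )$ on a concentric ball $\Omega \subset B_{R}$ with seminorm bounded by universal constants, and then invoke the exponential John--Nirenberg inequality (Lemma \ref{cor:JN} in Appendix) to conclude.

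The heart of the argument is a logarithmic Caccioppoli estimate. Fix any ball $B'=B(y_{0},s)$ with $2\kappa B'\subset \tfrac{3}{4}B_{R}$. By $(\mathrm{Cap}_{\leq })$ applied to $2\kappa B'$ (a cutoff of $(4\kappa /3)B'\supset \kappa B'$ in $2\kappa B'$ does the job), select $\phi \in \mathrm{cutoff}(\kappa B',2\kappa B')$ with $\mathcal{E}(\phi )\leq C\mu (2\kappa B')/w(2\kappa B')$. The function $\phi ^{2}u_{\lambda }^{-1}\in \mathcal{F}(B_{R})$ is non-negative (Proposition \ref{lemma:3.3}), so the $f$-superharmonicity of $u$ yields
\begin{equation*}
\mathcal{E}(u,\phi ^{2}u_{\lambda }^{-1})\geq -\Vert f\Vert _{L^{\infty }(B_{R})}\int \phi ^{2}u_{\lambda }^{-1}\,d\mu .
\end{equation*}
For the local part, the Leibniz rule $d\Gamma ^{(L)}\langle u,\phi ^{2}u_{\lambda }^{-1}\rangle =2u_{\lambda }^{-1}\phi \,d\Gamma ^{(L)}\langle u,\phi \rangle -\phi ^{2}u_{\lambda }^{-2}\,d\Gamma ^{(L)}\langle u\rangle $ combined with Young's inequality gives $\int \phi ^{2}\,d\Gamma ^{(L)}\langle g\rangle \leq -2\mathcal{E}^{(L)}(u,\phi ^{2}u_{\lambda }^{-1})+4\mathcal{E}^{(L)}(\phi )$. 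For the jump part, apply Proposition \ref{lemma:3.3} with its $B$ taken to be $B_{R}$, and dominate the outside term there by $-u_{\lambda }(y)\leq u_{-}(y)$ on $B_{R}^{c}$. Adding the local and jump estimates and using superharmonicity, I arrive at
\begin{equation*}
\int \phi ^{2}d\Gamma ^{(L)}\langle g\rangle +\iint_{B_{R}\times B_{R}}(\phi ^{2}(x)\wedge \phi ^{2}(y))|g(x)-g(y)|^{2}J(dx,dy)\leq C\mathcal{E}(\phi )+C\Vert f\Vert _{\infty }\!\!\int \phi ^{2}u_{\lambda }^{-1}d\mu +C\!\!\iint_{B_{R}\times B_{R}^{c}}\!u_{-}(y)\tfrac{\phi ^{2}(x)}{u_{\lambda }(x)}J(dx,dy).
\end{equation*}

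The hypothesis (\ref{12}) is precisely calibrated to absorb the last two terms. Since $u_{\lambda }\geq \lambda $ and $\mathrm{supp\,}\phi \subset \tfrac{3}{4}B_{R}$,
\begin{equation*}
\Vert f\Vert _{\infty }\!\!\int \phi ^{2}u_{\lambda }^{-1}d\mu \leq \frac{\Vert f\Vert _{\infty }\mu (\mathrm{supp\,}\phi )}{\lambda }\leq \frac{\mu (2\kappa B')}{w(B_{R})},\qquad \!\!\iint_{B_{R}\times B_{R}^{c}}\!u_{-}(y)\tfrac{\phi ^{2}(x)}{u_{\lambda }(x)}J(dx,dy)\leq \frac{\mu (2\kappa B')\,T_{\frac{3}{4}B_{R},B_{R}}(u_{-})}{\lambda }\leq \frac{\mu (2\kappa B')}{w(B_{R})},
\end{equation*}
each $\leq C\mu (2\kappa B')/w(2\kappa B')$ via $(\mathrm{VD})$ and (\ref{eq:vol_0}) (since $2\kappa B'\subset B_{R}$). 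Using $\phi \equiv 1$ on $\kappa B'$ this gives
\begin{equation*}
\int_{\kappa B'}d\Gamma ^{(L)}\langle g\rangle +\iint_{(\kappa B')^{2}}|g(x)-g(y)|^{2}J(dx,dy)\leq C\frac{\mu (2\kappa B')}{w(2\kappa B')}.
\end{equation*}
Now $(\mathrm{PI})$ on $B'$ applied to $g\in \mathcal{F}'\cap L^{\infty }$ (admissible by Proposition \ref{prop:3.2}) delivers
\begin{equation*}
\fint_{B'}|g-g_{B'}|^{2}d\mu \leq C\,\tfrac{w(B')}{\mu (B')}\cdot \tfrac{\mu (2\kappa B')}{w(2\kappa B')}\leq C,
\end{equation*}
the last bound being uniform since the scale factor $2\kappa $ contributes only multiplicative constants through $(\mathrm{VD})$ and (\ref{eq:vol_0}). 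Consequently $\Vert g\Vert _{\mathrm{BMO}(\Omega )}\leq C$ for the concentric $\Omega $ of radius comparable to $R/(4\kappa +1)$ (so that every $B'\subset \Omega $ satisfies $2\kappa B'\subset \tfrac{3}{4}B_{R}$).

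Finally, Lemma \ref{cor:JN} applied to $g$ on $\Omega $ produces some $p\in (0,1)$ such that
\begin{equation*}
\fint_{B_{r}}e^{p(g-g_{B_{r}})}d\mu \cdot \fint_{B_{r}}e^{-p(g-g_{B_{r}})}d\mu \leq C
\end{equation*}
for every $B_{r}$ well inside $\Omega $; the margin $r\leq R/(16(4\kappa +1))$ provides exactly the room John--Nirenberg requires. Substituting $g=\ln u_{\lambda }$ yields (\ref{eq:vol_316}). The principal obstacle is the logarithmic Caccioppoli estimate, and within it the outside-$B_{R}$ term from Proposition \ref{lemma:3.3}: because only the one-sided bound $u_{\lambda }\geq -u_{-}$ is available on $B_{R}^{c}$, the quantitative assumption (\ref{12}) on $\lambda $ is exactly what converts this error into the controllable multiple of $\mu (2\kappa B')/w(2\kappa B')$ required. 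Once the log-Caccioppoli estimate is in hand, the rest is standard Moser--John--Nirenberg bookkeeping.
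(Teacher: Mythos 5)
Your proposal is correct and follows essentially the same route as the paper: a logarithmic Caccioppoli estimate obtained from the $(\mathrm{Cap}_{\leq })$ cutoff, the $f$-superharmonicity, Proposition \ref{lemma:3.3} for the jump part, and the calibration (\ref{12}) of $\lambda $ to absorb the tail and $f$ terms, followed by $(\mathrm{PI})$ to get the $\mathrm{BMO}$ bound and then John--Nirenberg. The only things worth flagging are cosmetic (the paper builds the cutoff on $(\kappa B,\tfrac{3}{2}\kappa B)$ and applies Proposition \ref{lemma:3.3} on $2\kappa B$ rather than on $B_{R}$, and it works with $\ln (u_{+}+\lambda )$ to stay in $\mathcal{F}^{\prime }\cap L^{\infty }$) together with the degenerate case where the right-hand side of (\ref{12}) vanishes and $\lambda =0$ is permitted, which your division by $\lambda $ excludes and which the paper disposes of by replacing $\lambda $ with $\lambda +\varepsilon $ and letting $\varepsilon \rightarrow 0$.
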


\begin{proof}
The proof is motivated by \cite[Section 4]{Moser.1961.CPAM577} and \cite[%
Proposition 5.7]{BiroliMosco.1995.AMPA4125} for diffusions. The key is to
show that the logarithm function $\ln {u_{\lambda }}$ is a $\mathrm{BMO}$ $%
\mathrm{function}$ (cf. Definition \ref{df:BMO} in Appendix). Our result
covers both a diffusion and a jump process.

Let $B:=B(z,r)$ be an arbitrary ball contained in $\frac{3}{4(4\kappa +1)}%
B_{R}$. Without loss of generality, we may assume that%
\begin{equation}
r\leq 2\cdot \frac{3}{4(4\kappa +1)}R=\frac{3}{2(4\kappa +1)}R<R,
\label{rr2}
\end{equation}%
see for example \cite[Remark 3.16]{Bjorn.2011.EMS}. Then%
\begin{equation}
2\kappa B\subset \frac{3}{4}B_{R}=B(x_{0},\frac{3}{4}R),  \label{eq:vol_301}
\end{equation}%
since by the triangle inequality, for any point $x\in 2\kappa B=B(z,2\kappa
r)$, 
\begin{align*}
d(x,x_{0})& \leq d(x,z)+d(z,x_{0})<2\kappa r+\frac{3}{4(4\kappa +1)}R \\
& \leq 2\kappa \cdot \frac{3}{2(4\kappa +1)}R+\frac{3}{4(4\kappa +1)}R=\frac{%
3}{4}R.
\end{align*}

Let $u\in \mathcal{F^{\prime }}\cap L^{\infty }$ be $f$-superharmonic and
non-negative in $B_{R}$. Applying Proposition \ref{prop:3.1} with $B$
replaced by $2\kappa B$, we have 
\begin{equation}
T_{\frac{3}{2}\kappa B,2\kappa B}(u_{-})\leq T_{\frac{3}{4}%
B_{R},B_{R}}(u_{-}).  \label{eq:vol_302}
\end{equation}%
Let $\lambda $ be a number satisfying (\ref{12}). Without loss of
generality, we assume that 
\begin{equation*}
w(B_{R})\Big(T_{\frac{3}{4}B_{R},B_{R}}(u_{-})+||f||_{L^{\infty }(B_{R})}%
\Big)>0\ \ (\text{thus }\lambda >0).
\end{equation*}%
Otherwise, we consider $\lambda +\varepsilon $ for some $\varepsilon >0$ and
then let $\varepsilon \rightarrow 0$. We shall show that%
\begin{equation}
\ln {u_{\lambda }}\subset \mathrm{BMO}\left( \frac{3}{4(4\kappa +1)}%
B_{R}\right) .  \label{31}
\end{equation}

Indeed, note that $\ln (u_{+}+\lambda )\in \mathcal{F^{\prime }}\cap
L^{\infty }$ by using Proposition \ref{prop:3.2}. Applying condition $(%
\mathrm{PI})$ to the function $\ln (u_{+}+\lambda )$, we have by (\ref%
{eq:vol_5}) that 
\begin{align}
& \int_{B}\big(\ln (u_{+}+\lambda )-(\ln (u_{+}+\lambda ))_{B}\big)^{2}d\mu 
\notag \\
& \leq Cw(B)\left\{ \int_{\kappa B}d\Gamma ^{(L)}\langle \ln (u_{+}+\lambda
)\rangle +\iint_{(\kappa B)\times (\kappa B)}\big(\ln (u_{+}(x)+\lambda
)-\ln (u_{+}(y)+\lambda )\big)^{2}J(dx,dy)\right\}  \notag \\
& =Cw(B)\left\{ \int_{\kappa B}d\Gamma ^{(L)}\langle \ln u_{\lambda }\rangle
+\iint_{(\kappa B)\times (\kappa B)}\Big(\ln \frac{u_{\lambda }(x)}{%
u_{\lambda }(y)}\Big)^{2}J(dx,dy)\right\} ,  \label{eq:vol_305}
\end{align}%
where we have used the fact that $u\geq 0$ (thus $u_{+}=u$) in $B_{R}\supset
\kappa B$.

We estimate the right-hand side of (\ref{eq:vol_305}). Indeed, using
condition $(\mathrm{Cap}_{\leq })$ to the two concentric balls $(\kappa B,%
\frac{3}{2}\kappa B)$, we have by (\ref{eq:vol_3}), (\ref{eq:vol_0}) 
\begin{equation}
\mathcal{E}(\phi ,\phi )\leq C\frac{\mu \left( \frac{3}{2}\kappa B\right) }{%
w\left( \frac{3}{2}\kappa B\right) }\leq C^{\prime }\frac{\mu (B)}{w(B)}
\label{eq:vol_304}
\end{equation}%
for some $\phi \in \mathrm{cutoff}\left( \kappa B,\frac{3}{2}\kappa B\right) 
$.

On the other hand, using the Leibniz and chain rules of $d\Gamma
^{(L)}\langle \cdot \rangle $, we see that 
\begin{align}
\int \phi ^{2}d\Gamma ^{(L)}\langle \ln u_{\lambda }\rangle & =-\int \phi
^{2}d\Gamma ^{(L)}\langle u_{\lambda },u_{\lambda }^{-1}\rangle  \notag \\
& =-\int d\Gamma ^{(L)}\langle u_{\lambda },\phi ^{2}u_{\lambda
}^{-1}\rangle +2\int \phi u_{\lambda }^{-1}d\Gamma ^{(L)}\langle u_{\lambda
},\phi \rangle  \notag \\
& =-\mathcal{E}^{(L)}(u_{\lambda },\phi ^{2}u_{\lambda }^{-1})+2\int \phi
u_{\lambda }^{-1}d\Gamma ^{(L)}\langle u_{\lambda },\phi \rangle .
\label{eq:vol_306}
\end{align}%
By the Cauchy-Schwarz inequality, 
\begin{align}
2\int \phi u_{\lambda }^{-1}d\Gamma ^{(L)}\langle u_{\lambda },\phi \rangle
& =2\int \phi d\Gamma ^{(L)}\langle \ln {u_{\lambda }},\phi \rangle \leq 
\frac{1}{2}\int \phi ^{2}d\Gamma ^{(L)}\langle \ln {u_{\lambda }}\rangle
+2\int d\Gamma ^{(L)}\langle \phi \rangle  \notag \\
& =\frac{1}{2}\int \phi ^{2}d\Gamma ^{(L)}\langle \ln {u_{\lambda }}\rangle
+2\mathcal{E}^{(L)}(\phi ,\phi ),  \label{eq:vol_313}
\end{align}%
from which, it follows by (\ref{eq:vol_306}) that%
\begin{equation}
\int \phi ^{2}d\Gamma ^{(L)}\langle \ln u_{\lambda }\rangle \leq -2\mathcal{E%
}^{(L)}(u_{\lambda },\phi ^{2}u_{\lambda }^{-1})+4\mathcal{E}^{(L)}(\phi
,\phi ).  \label{33}
\end{equation}%
We estimate the first term on the right-hand side.

Indeed, since $\phi =0$ in $\left( \frac{3}{2}\kappa B\right) ^{c}\supset
(2\kappa B)^{c}$, using Proposition \ref{lemma:3.3} with $B$ being replaced
by $2\kappa B$, we obtain that $0\leq \phi ^{2}u_{\lambda }^{-1}\in \mathcal{%
F}(2\kappa B)$, and 
\begin{eqnarray}
\mathcal{E}^{(J)}(u,\phi ^{2}u_{\lambda }^{-1}) &\leq &-\frac{1}{2}%
\iint_{(2\kappa B)\times (2\kappa B)}(\phi ^{2}(x)\wedge \phi ^{2}(y))\Big|%
\ln \frac{u_{\lambda }(y)}{u_{\lambda }(x)}\Big|^{2}J(dx,dy)  \notag \\
&&+3\mathcal{E}^{(J)}(\phi ,\phi )-2\iint_{(2\kappa B)\times (2\kappa
B)^{c}}u_{\lambda }(y)\frac{\phi ^{2}(x)}{u_{\lambda }(x)}J(dx,dy).
\label{eq:vol_307}
\end{eqnarray}%
Noting that $\mathcal{E}(u_{\lambda },\phi ^{2}u_{\lambda }^{-1})\geq
(f,\phi ^{2}u_{\lambda }^{-1})$ since $u$ is $f$-superharmonic in $%
B_{R}\supset 2\kappa B$, we see by (\ref{eq:DF-LJ}), (\ref{eq:vol_307}) that 
\begin{align}
-\mathcal{E}^{(L)}(u_{\lambda },\phi ^{2}u_{\lambda }^{-1})& =-\mathcal{E}%
(u_{\lambda },\phi ^{2}u_{\lambda }^{-1})+\mathcal{E}^{(J)}(u_{\lambda
},\phi ^{2}u_{\lambda }^{-1})  \notag \\
& \leq -(f,\phi ^{2}u_{\lambda }^{-1})+\mathcal{E}^{(J)}(u_{\lambda },\phi
^{2}u_{\lambda }^{-1})  \notag \\
& \leq -(f,\phi ^{2}u_{\lambda }^{-1})-\frac{1}{2}\iint_{(2\kappa B)\times
(2\kappa B)}(\phi ^{2}(x)\wedge \phi ^{2}(y))\Big|\ln \frac{u_{\lambda }(y)}{%
u_{\lambda }(x)}\Big|^{2}J(dx,dy)  \notag \\
& +3\mathcal{E}^{(J)}(\phi ,\phi )-2\iint_{(2\kappa B)\times (2\kappa
B)^{c}}u_{\lambda }(y)\frac{\phi ^{2}(x)}{u_{\lambda }(x)}J(dx,dy).
\label{eq:vol_311}
\end{align}%
Since $\phi =1$ in $\kappa B$, we have%
\begin{equation}
-\iint_{(2\kappa B)\times (2\kappa B)}(\phi ^{2}(x)\wedge \phi ^{2}(y))\Big|%
\ln \frac{u_{\lambda }(y)}{u_{\lambda }(x)}\Big|^{2}J(dx,dy)\leq
-\iint_{(\kappa B)\times (\kappa B)}\Big|\ln \frac{u_{\lambda }(y)}{%
u_{\lambda }(x)}\Big|^{2}J(dx,dy),  \label{eq:vol_308}
\end{equation}%
whilst, since $\phi =0$ in $(\frac{3}{2}\kappa B)^{c}$ and $0\leq \phi \leq 1
$ in $M$, 
\begin{align}
-\iint_{(2\kappa B)\times (2\kappa B)^{c}}u_{\lambda }(y)\frac{\phi ^{2}(x)}{%
u_{\lambda }(x)}J(dx,dy)& =-\iint_{(\frac{3}{2}\kappa B)\times (2\kappa
B)^{c}}u_{\lambda }(y)\frac{\phi ^{2}(x)}{u_{\lambda }(x)}J(dx,dy)  \notag \\
& \leq \iint_{(\frac{3}{2}\kappa B)\times (2\kappa B)^{c}}u_{-}(y)\frac{1}{%
u_{\lambda }(x)}J(dx,dy)  \notag \\
& \leq \frac{1}{\lambda }\int_{(\frac{3}{2}\kappa B)}\left\{ \esup_{x\in (%
\frac{3}{2}\kappa B)}\int_{(2\kappa B)^{c}}u_{-}(y)J(x,dy)\right\} \mu (dx) 
\notag \\
& =\frac{1}{\lambda }\mu \left( \frac{3}{2}\kappa B\right) T_{\frac{3}{2}%
\kappa B,2\kappa B}(u_{-})  \notag \\
& \leq \frac{1}{\lambda }C_{\mu }\left( \frac{3}{2}\kappa \right)
^{d_{2}}\mu (B)T_{\frac{3}{4}B_{R},B_{R}}(u_{-}),  \label{eq:vol_309}
\end{align}%
where in the last inequality we have used condition $(\mathrm{VD})$ and
inequality (\ref{eq:vol_302}). From this, condition (\ref{12}), and using
the fact that $\frac{w(B_{R})}{w(B)}\geq C_{1}$ by (\ref{eq:vol_0}), (\ref%
{rr2}), we obtain%
\begin{eqnarray}
-\iint_{(2\kappa B)\times (2\kappa B)^{c}}u_{\lambda }(y)\frac{\phi ^{2}(x)}{%
u_{\lambda }(x)}J(dx,dy) &\leq &\frac{1}{\lambda }C_{\mu }\left( \frac{3}{2}%
\kappa \right) ^{d_{2}}\mu (B)T_{\frac{3}{4}B_{R},B_{R}}(u_{-})  \notag \\
&\leq &\frac{1}{w(B_{R})T_{\frac{3}{4}B_{R},B_{R}}(u_{-})}C_{\mu }\left( 
\frac{3}{2}\kappa \right) ^{d_{2}}\mu (B)T_{\frac{3}{4}B_{R},B_{R}}(u_{-}) 
\notag \\
&=&C_{\mu }\left( \frac{3}{2}\kappa \right) ^{d_{2}}\frac{\mu (B)}{w(B_{R})}%
\leq C\frac{\mu (B)}{w(B)}.  \label{eq:vol_310}
\end{eqnarray}%
Therefore, plugging (\ref{eq:vol_308}) and (\ref{eq:vol_310}) into (\ref%
{eq:vol_311}), we obtain 
\begin{equation}
-\mathcal{E}^{(L)}(u_{\lambda },\phi ^{2}u_{\lambda }^{-1})\leq -(f,\phi
^{2}u_{\lambda }^{-1})-\frac{1}{2}\iint_{(\kappa B)\times (\kappa B)}\Big|%
\ln \frac{u_{\lambda }(y)}{u_{\lambda }(x)}\Big|^{2}J(dx,dy)+3\mathcal{E}%
^{(J)}(\phi ,\phi )+2C\frac{\mu (B)}{w(B)}.  \label{32}
\end{equation}%
Plugging (\ref{32}), (\ref{eq:vol_304}) into (\ref{33}), it follows that%
\begin{eqnarray*}
\int \phi ^{2}d\Gamma ^{(L)}\langle \ln u_{\lambda }\rangle  &\leq &-2%
\mathcal{E}^{(L)}(u_{\lambda },\phi ^{2}u_{\lambda }^{-1})+4\mathcal{E}%
^{(L)}(\phi ,\phi ) \\
&\leq &-2(f,\phi ^{2}u_{\lambda }^{-1})-\iint_{(\kappa B)\times (\kappa B)}%
\Big|\ln \frac{u_{\lambda }(y)}{u_{\lambda }(x)}\Big|^{2}J(dx,dy)+6\mathcal{E%
}^{(J)}(\phi ,\phi ) \\
&&+4C\frac{\mu (B)}{w(B)}+4\mathcal{E}^{(L)}(\phi ,\phi ) \\
&\leq &-2(f,\phi ^{2}u_{\lambda }^{-1})-\iint_{(\kappa B)\times (\kappa B)}%
\Big|\ln \frac{u_{\lambda }(y)}{u_{\lambda }(x)}\Big|^{2}J(dx,dy)+C^{\prime }%
\frac{\mu (B)}{w(B)},
\end{eqnarray*}%
from which, using the fact that $\phi =1$ in $\kappa B$, we have 
\begin{equation}
\int_{\kappa B}d\Gamma ^{(L)}\langle \ln u_{\lambda }\rangle +\iint_{(\kappa
B)\times (\kappa B)}\Big|\ln \frac{u_{\lambda }(y)}{u_{\lambda }(x)}\Big|%
^{2}J(dx,dy)\leq -2(f,\phi ^{2}u_{\lambda }^{-1})+C^{\prime }\frac{\mu (B)}{%
w(B)}.  \label{eq:vol_314}
\end{equation}%
Since 
\begin{align*}
-2(f,\phi ^{2}u_{\lambda }^{-1})& =-2\int_{\frac{3}{2}\kappa B}f\phi
^{2}u_{\lambda }^{-1}d\mu \leq 2\int_{\frac{3}{2}\kappa B}|f|u_{\lambda
}^{-1}d\mu  \\
& \leq 2\int_{\frac{3}{2}\kappa B}\frac{||f||_{L^{\infty }(B_{R})}}{\lambda }%
d\mu \leq \frac{2\mu (\frac{3}{2}\kappa B)}{w(B_{R})}\ \ \ (\text{by using (%
\ref{12}))} \\
& \leq C\frac{\mu (B)}{w(B)}\ \ \ \ \ \ \text{(by condition }(\mathrm{VD})\ 
\text{and (\ref{eq:vol_0}))},
\end{align*}%
we have by plugging (\ref{eq:vol_314}) into (\ref{eq:vol_305}), 
\begin{equation*}
\int_{B}\big(\ln {u_{\lambda }}-(\ln {u_{\lambda }})_{B}\big)^{2}d\mu \leq
Cw(B)\cdot \Big(-2(f,\phi ^{2}u_{\lambda }^{-1})+C^{\prime }\frac{\mu (B)}{%
w(B)}\Big)\leq C\mu (B),
\end{equation*}%
which yields that, using the Cauchy-Schwarz inequality, 
\begin{equation*}
\left( \int_{B}\left\vert \ln {u_{\lambda }}-(\ln {u_{\lambda }}%
)_{B}\right\vert d\mu \right) ^{2}\leq \mu (B)\left( \int_{B}\big(\ln {%
u_{\lambda }}-(\ln {u_{\lambda }})_{B}\big)^{2}d\mu \right) \leq C^{\prime
\prime }\mu (B)^{2},
\end{equation*}%
that is, 
\begin{equation}
\fint_{B}\left\vert \ln {u_{\lambda }}-(\ln {u_{\lambda }})_{B}\right\vert
d\mu \leq C_{3}  \label{eq:vol_315}
\end{equation}%
for all balls $B$ in $\frac{3}{4(4\kappa +1)}B_{R}$ and all $\lambda $
satisfying (\ref{12}), where $C_{3}$ is a universal constant independent of $%
B_{R},B,\lambda $ and the functions $u,f$, thus proving (\ref{31}).

Applying Corollary \ref{cor:JN} in Appendix with function $\ln {u_{\lambda }}
$ and $B_{0}=\frac{3}{4(4\kappa +1)}B_{R}$, we have 
\begin{equation*}
\left\{ \fint_{B}\exp \left( \frac{c_{2}}{2b}\ln u_{\lambda }\right) d\mu
\right\} \left\{ \fint_{B}\exp \left( -\frac{c_{2}}{2b}\ln u_{\lambda
}\right) d\mu \right\} \leq (1+c_{1})^{2}
\end{equation*}%
for any ball $B$ satisfies $12B\subseteq \frac{3}{4(4\kappa +1)}B_{R}$ and
any 
\begin{equation}
b\geq ||\ln {u_{\lambda }}||_{\mathrm{BMO}\left( \frac{3}{4(4\kappa +1)}%
B_{R}\right) }.  \label{34}
\end{equation}%
In particular, for any $B_{r}:=B(x_{0},r)$ with $0<r\leq \frac{R}{16(4\kappa
+1)}$ so that $12B_{r}\subseteq \frac{3}{4(4\kappa +1)}B_{R}$ and for any
number $b$ satisfying (\ref{34}), 
\begin{equation}
\left\{ \fint_{B_{r}}\exp \left( \frac{c_{2}}{2b}\ln u_{\lambda }\right)
d\mu \right\} \left\{ \fint_{B_{r}}\exp \left( -\frac{c_{2}}{2b}\ln
u_{\lambda }\right) d\mu \right\} \leq (1+c_{1})^{2}.  \label{35}
\end{equation}

Finally, choosing $b=\frac{c_{2}}{2}+C_{3}$ so that (\ref{34}) is satisfied
and letting $p:=\frac{c_{2}}{2b}\in (0,1)$, we conclude from (\ref{35}) that 
\begin{equation*}
\left\{ \fint_{B_{r}}\exp \left( p\ln u_{\lambda }\right) d\mu \right\}
\left\{ \fint_{B_{r}}\exp \left( -p\ln u_{\lambda }\right) d\mu \right\}
\leq (1+c_{1})^{2},
\end{equation*}%
thus showing (\ref{eq:vol_316}). The proof is complete.
\end{proof}

We are now in a position to prove Theorem \ref{thm:M1}.

\begin{proof}[Proof of Theorem \protect\ref{thm:M1}]
We need to show the implication (\ref{20}). Indeed, by Lemma \ref{lemma:LG},
condition $(\mathrm{LG})$ is true. Let $B_{R}:=B(x_{0},R)$ be a metric ball
in $M$ with $0<R<\sigma \overline{R}$, where constant $\sigma $ comes from
condition $(\mathrm{LG})$. Let $B_{r}:=B(x_{0},r)$ with 
\begin{equation}
0<r\leq \delta R\text{ \ where }\delta :=\frac{1}{32(4\kappa +1)}  \label{23}
\end{equation}%
and $\kappa $ is the same constant as in condition $(\mathrm{PI})$. Let $%
u\in \mathcal{F^{\prime }}\cap L^{\infty }$ be a function that is
non-negative, $f$-superharmonic in $B_{R}$. We need to show that%
\begin{equation}
\left( \frac{1}{\mu (B_{r})}\int_{B_{r}}u^{p}d\mu \right) ^{1/p}\leq C\left( %
\einf_{{B}_{r}}u+w(B_{r})\left( T_{\frac{3}{4}B_{R},B_{R}}(u_{-})+||f||_{L^{%
\infty }(B_{R})}\right) \right)  \label{24}
\end{equation}%
for some universal numbers $p\in (0,1)$ and $C\geq 1$, both of which are
independent of $B_{R},r,u,f$.

To do this, let $\lambda $ be a number determined by 
\begin{equation}
\lambda =w(B_{R})\left( T_{\frac{3}{4}B_{R},B_{R}}(u_{-})+||f||_{L^{\infty
}(B_{R})}\right).  \label{25}
\end{equation}%
We claim that for any $r\in (0,\delta R]$ 
\begin{equation}
\left( \fint_{B_{r}}u_{\lambda }^{p}d\mu \right) ^{1/p}\leq C\einf_{{B_{r}}%
}u_{\lambda }\text{ with }u_{\lambda }=u+\lambda  \label{eq:vol_318}
\end{equation}%
for some constant $C$ independent of $B_{R},r$, $u,f$.

Indeed, by Lemma \ref{thm:M4}, there exist two positive constants $p\in
(0,1) $ and $c^{\prime }$, independent of $B_{R},r,u,f$, such that 
\begin{equation}
\left( \fint_{B_{r}}u_{\lambda }^{p}d\mu \right) ^{1/p}\left( \fint%
_{B_{r}}u_{\lambda }^{-p}d\mu \right) ^{1/p}\leq c^{\prime }
\label{eq:vol_329}
\end{equation}%
for any $0<r\leq 2\delta R$. Let $s=p/\theta $, where $\theta =\frac{1}{\nu }
$ with constant $\nu $ coming from condition $(\mathrm{FK})$. Without loss
of generality, assume $\theta \geq 1$. Thus $s\in (0,1)$. Let%
\begin{equation*}
b:=w(B_{r})\left( T_{\frac{3}{2}B_{r},2B_{r}}\big((u_{\lambda })_{-}\big)%
+||f||_{L^{\infty }(B_{r})}\right) .
\end{equation*}%
Define the function $g$ by%
\begin{equation}
g(a)=a^{s}(1+\frac{2b}{a})\text{ for any }a\in (0,+\infty ).  \label{gg}
\end{equation}%
Using the facts that $(u_{\lambda })_{-}\leq u_{-}$ in $M$\ and $%
2B_{r}\subset \frac{3}{4}B_{R}$, we have, by Proposition \ref{prop:3.1} with 
$B$ being replaced by $2B_{r}$, that 
\begin{align}
b& =w(B_{r})\left( T_{\frac{3}{2}B_{r},2B_{r}}\big((u_{\lambda })_{-}\big)%
+||f||_{L^{\infty }(B_{r})}\right) \leq w(B_{r})\left( T_{\frac{3}{4}%
(2B_{r}),2B_{r}}(u_{-})+||f||_{L^{\infty }(B_{r})}\right)  \notag \\
& \leq w(B_{R})\left( T_{\frac{3}{4}B_{R},B_{R}}(u_{-})+||f||_{L^{\infty
}(B_{R})}\right) =\lambda .  \label{26}
\end{align}%
Clearly, for any $a>\lambda $, 
\begin{equation}
\frac{\mu (B_{r}\cap \{u_{\lambda }<a\})}{\mu (B_{r})}=\frac{\mu (B_{r}\cap
\{u_{\lambda }^{-p}>a^{-p}\})}{\mu (B_{r})}\leq a^{p}\fint_{B_{r}}u_{\lambda
}^{-p}d\mu .  \label{27}
\end{equation}

Note that $u_{\lambda }\in \mathcal{F^{\prime }}\cap L^{\infty }$ is $f$%
-superharmonic, non-negative in $2B_{r}\subset B_{R}$. To look at whether
the hypotheses in condition $(\mathrm{LG})$ are satisfied or not, we
consider two cases.

\emph{Case }$\emph{1}$. Assume that there exists a number $a$ ($>\lambda $)
such that 
\begin{eqnarray}
\varepsilon _{0}2^{-(2+C_{L})\theta }\left( 1+\frac{2b}{a}\right) ^{-\theta
} &=&\varepsilon _{0}2^{-(2+C_{L})\theta }\left( 1+\frac{2w(B_{r})\left( T_{%
\frac{3}{2}B_{r},2B_{r}}\big((u_{\lambda })_{-}\big)+||f||_{L^{\infty
}(B_{r})}\right) }{a}\right) ^{-\theta }  \notag \\
&=&a^{p}\fint_{B_{r}}u_{\lambda }^{-p}d\mu ,  \label{lge}
\end{eqnarray}%
that is, 
\begin{equation}
\big(g(a)\big)^{1/s}=a\left( 1+\frac{2b}{a}\right) ^{1/s}=\varepsilon
_{1}^{1/p}\left( \fint_{B_{r}}u_{\lambda }^{-p}d\mu \right) ^{-1/p},
\label{eq:vol_319}
\end{equation}%
where the constant $C_{L}$ comes from condition $(\mathrm{LG})$ and $%
\varepsilon _{1}:=\varepsilon _{0}2^{-(2+C_{L})\theta }$. In this case, by
using (\ref{27}) and (\ref{lge}), we have 
\begin{align*}
\frac{\mu (B_{r}\cap \{u_{\lambda }<a\})}{\mu (B_{r})}& \leq a^{p}\fint%
_{B_{r}}u_{\lambda }^{-p}d\mu =\varepsilon _{0}2^{-(2+C_{L})\theta }\left( 1+%
\frac{2w(B_{r})\left( T_{\frac{3}{2}B_{r},2B_{r}}\big((u_{\lambda })_{-}\big)%
+||f||_{L^{\infty }(B_{r})}\right) }{a}\right) ^{-\theta } \\
& \leq \varepsilon _{0}2^{-(2+C_{L})\theta }\left( 1+\frac{2w(B_{r})\left(
T_{\frac{7}{8}B_{r},B_{r}}\big((u_{\lambda })_{-}\big)+||f||_{L^{\infty
}(B_{r})}\right) }{a}\right) ^{-\theta } \\
& =\varepsilon _{0}(1-1/2)^{2\theta }(1-1/2)^{C_{L}\theta }\left( 1+\frac{%
w(B_{r})\left( T_{\frac{3+1/2}{4}B_{r},B_{r}}\big((u_{\lambda })_{-}\big)%
+||f||_{L^{\infty }(B_{r})}\right) }{\frac{1}{2}a}\right) ^{-\theta },
\end{align*}%
since $T_{\frac{7}{8}B_{r},B_{r}}\big((u_{\lambda })_{-}\big)\leq T_{\frac{3%
}{2}B_{r},2B_{r}}\big((u_{\lambda })_{-}\big)$ by noting that $u_{\lambda }$
is non-negative in $2B_{r}$. Therefore, we see that the assumption (\ref%
{eq:vol_317}), with $B$ being replaced by $B_{r}$ and $u$ replaced by $%
u_{\lambda }$, is true. Thus, all the hypothesis in condition $\mathrm{LG}%
(\varepsilon ,\delta )$ are satisfied with $\varepsilon =\delta =1/2$.
Therefore, it follows that 
\begin{align*}
\einf_{{\frac{1}{2}B_{r}}}u_{\lambda }& \geq \frac{1}{2}a=\frac{1}{2}\left(
1+\frac{2b}{a}\right) ^{-1/s}\varepsilon _{1}^{1/p}\left( \fint%
_{B_{r}}u_{\lambda }^{-p}d\mu \right) ^{-1/p}\text{ \ (using (\ref%
{eq:vol_319}))} \\
& \geq \frac{1}{2}\left( 1+\frac{2b}{a}\right) ^{-1/s}\varepsilon _{1}^{1/p}{%
c^{\prime }}^{-1}\left( \fint_{B_{r}}u_{\lambda }^{p}d\mu \right) ^{1/p}%
\text{ \ (using (\ref{eq:vol_329}))} \\
& \geq \frac{1}{2}3^{-1/s}\varepsilon _{1}^{1/p}{c^{\prime }}^{-1}\left( %
\fint_{B_{r}}u_{\lambda }^{p}d\mu \right) ^{1/p}\text{ \ (using (\ref{26})
and }a>\lambda \text{),}
\end{align*}%
which gives that 
\begin{equation*}
\left( \fint_{B_{r}}u_{\lambda }^{p}d\mu \right) ^{1/p}\leq 2c^{\prime
}\varepsilon _{1}^{-1/p}3^{1/s}\einf_{{\frac{1}{2}B_{r}}}u_{\lambda }.
\end{equation*}%
Thus, the inequality (\ref{eq:vol_318}) is true in this case.

\emph{Case }$\emph{2}$\emph{.} Assume that (\ref{eq:vol_319}) is not
satisfied for any $a\in (\lambda ,+\infty )$. In this case, noting that 
\begin{equation*}
\lim_{a\rightarrow +\infty }g(a)=+\infty
\end{equation*}%
and $g$ is continuous on $(0,+\infty )$, we have that 
\begin{equation}
\big(g(a)\big)^{1/s}>\varepsilon _{1}^{1/p}\left( \fint_{B_{r}}u_{\lambda
}^{-p}d\mu \right) ^{-1/p},  \label{g2}
\end{equation}%
for any $a\in (\lambda ,+\infty )$.

If $\lambda =0$, then $b=0$ by (\ref{26}). By definition (\ref{gg}), we have 
$g(a)=a^{s}$ for any $a>0$. From this and using (\ref{g2}), it follows that 
\begin{equation*}
\varepsilon _{1}^{1/p}\left( \fint_{B_{r}}u_{\lambda }^{-p}d\mu \right)
^{-1/p}<g(a)^{1/s}=a\text{ for any }a\in (0,+\infty ).
\end{equation*}%
Letting $a\rightarrow 0$, we have $\left( \fint_{B_{r}}u_{\lambda }^{-p}d\mu
\right) ^{-1/p}=0$, which gives that%
\begin{equation*}
\left( \fint_{B_{r}}u_{\lambda }^{p}d\mu \right) ^{1/p}=0
\end{equation*}%
by using (\ref{eq:vol_329}), thus showing (\ref{eq:vol_318}).

In the sequel, we assume that $\lambda >0$. Since $g$ is continuous on $%
(0,+\infty )$, we have from (\ref{g2}) by letting $a\searrow \lambda $ that%
\begin{equation*}
\big(g(\lambda )\big)^{1/s}\geq \varepsilon _{1}^{1/p}\left( \fint%
_{B_{r}}u_{\lambda }^{-p}d\mu \right) ^{-1/p},
\end{equation*}%
from which, we see by using (\ref{26}) 
\begin{equation}
3^{1/s}\lambda \geq \lambda \left( 1+\frac{2b}{\lambda }\right) ^{1/s}=\big(%
g(\lambda )\big)^{1/s}\geq \varepsilon _{1}^{1/p}\left( \fint%
_{B_{r}}u_{\lambda }^{-p}d\mu \right) ^{-1/p}.  \label{eq:vol_321}
\end{equation}%
Thus, we have 
\begin{eqnarray}
\left( \fint_{B_{r}}u_{\lambda }^{p}d\mu \right) ^{1/p} &\leq
&c^{\prime}\left( \fint_{B_{r}}u_{\lambda }^{-p}d\mu \right) ^{-1/p}\text{ \
(using (\ref{eq:vol_329}))}  \notag \\
&\leq &c^{\prime}\varepsilon _{1}^{-1/p}3^{1/s}\lambda \text{ \ (using (\ref%
{eq:vol_321})).}  \label{eq:vol_324}
\end{eqnarray}

Therefore, combining Case $1$ and Case $2$, we always have 
\begin{equation}
\left( \fint_{B_{r}}u_{\lambda }^{p}d\mu \right) ^{1/p}\leq C(\lambda +\einf%
_{{\frac{1}{2}B_{r}}}u_{\lambda })\leq 2C\einf_{{\frac{1}{2}B_{r}}%
}u_{\lambda }  \label{eq:vol_325}
\end{equation}%
for any $0<r\leq 2\delta R$.

On the other hand, by condition $(\mathrm{VD})$, 
\begin{equation}
\fint_{B_{r}}u_{\lambda }^{p}d\mu \geq \frac{1}{\mu (B_{r})}\int_{\frac{1}{2}%
B_{r}}u_{\lambda }^{p}d\mu \geq \frac{1}{C_{\mu }\mu \left( \frac{1}{2}%
B_{r}\right) }\int_{\frac{1}{2}B_{r}}u_{\lambda }^{p}d\mu ,
\label{eq:vol_326}
\end{equation}%
from which, it follows by (\ref{eq:vol_325}) that 
\begin{equation*}
\left( \fint_{\frac{1}{2}B_{r}}u_{\lambda }^{p}d\mu \right) ^{1/p}\leq
C^{\prime }\einf_{{\frac{1}{2}B_{r}}}u_{\lambda }
\end{equation*}%
for $0<r\leq 2\delta R$, thus proving our claim (\ref{eq:vol_318}) by
renaming $r/2$ by $r$, as desired.

Therefore, we obtain by (\ref{eq:vol_318}) 
\begin{align}
\left( \frac{1}{\mu (B_{r})}\int_{B_{r}}u^{p}d\mu \right) ^{1/p}& \leq
\left( \frac{1}{\mu (B_{r})}\int_{B_{r}}u_{\lambda }^{p}d\mu \right)
^{1/p}\leq C^{\prime }\einf_{{B_{r}}}u_{\lambda }  \notag \\
& =C^{\prime }\left( \einf_{{B_{r}}}u+w(B_{R})\left( T_{\frac{3}{4}%
B_{R},B_{R}}(u_{-})+||f||_{L^{\infty }(B_{R})}\right) \right)
\label{eq:vol_327}
\end{align}%
for $0<r\leq \delta R$.

Finally, we show that the term $w(B_{R})$ on the right-hand side of (\ref%
{eq:vol_327}) can be replaced by a smaller one $w(B_{r})$ for any $0<r\leq
\delta R$, by adjusting the value of constant $C^{\prime }$.

Indeed, fix a number $r$ in $(0,\delta R)$. Let $i\geq 1$ be an integer such
that, setting $r_{k}=$ $\delta ^{k}R$ for any $k\geq 0$, 
\begin{equation}
r_{i+1}=\delta ^{i+1}R\leq r<\delta ^{i}R=r_{i}.  \label{328}
\end{equation}%
By Proposition \ref{prop:3.1}, we see that 
\begin{equation}
T_{\frac{3}{4}B_{r_{i-1}},B_{r_{i-1}}}(u_{-})\leq T_{\frac{3}{4}%
B_{R},B_{R}}(u_{-}).  \label{eq:vol_328}
\end{equation}%
By (\ref{eq:vol_0}) and (\ref{328}),%
\begin{equation}
w(B_{r_{i-1}})=\frac{w(x_{0},\delta ^{i-1}R)}{w(x_{0},r)}w(x_{0},r)\leq
C_{2}\left( \frac{\delta ^{i-1}R}{r}\right) ^{\beta _{2}}w(B_{r})\leq
C_{2}\delta ^{-2\beta _{2}}w(B_{r}).  \label{41}
\end{equation}

Since $u$ is $f$-superharmonic in $B_{r_{i-1}}$, applying (\ref{eq:vol_327})
with $R$ being replaced by $r_{i-1}$ and then using (\ref{eq:vol_328}), (\ref%
{41}), we conclude that 
\begin{align*}
\left( \frac{1}{\mu (B_{r})}\int_{B_{r}}u^{p}d\mu \right) ^{1/p}& \leq
C^{\prime }\left( \einf_{{B_{r}}}u+w(B_{r_{i-1}})\left( T_{\frac{3}{4}%
B_{r_{i-1}},B_{r_{i-1}}}(u_{-})+||f||_{L^{\infty }(B_{r_{i-1}})}\right)
\right) \\
& =C^{\prime }\left( \einf_{{B_{r}}}u+C_{2}\delta ^{-2\beta
_{2}}w(B_{r})\left( T_{\frac{3}{4}B_{R},B_{R}}(u_{-})+||f||_{L^{\infty
}(B_{R})}\right) \right) \\
& \leq C\left( \einf_{{B_{r}}}u+w(B_{r})\left( T_{\frac{3}{4}%
B_{R},B_{R}}(u_{-})+||f||_{L^{\infty }(B_{R})}\right) \right) ,
\end{align*}%
thus showing that condition $(\mathrm{wEH})$ holds. The proof is complete.
\end{proof}

\section{Other equivalent characterizations}

\label{sec:wEHI}In this section, we prove Theorem \ref{thm:M10}. Denote by 
\begin{equation}
\omega _{B}(A):=\frac{\mu (A\cap B)}{\mu (B)},  \label{42}
\end{equation}%
the \emph{occupation measure} of the set $A$ in $B$.

The following version of the weak elliptic Harnack inequality was introduced
in \cite[Proposition 3.6]{ChenKumagaiWang.2016.EH} when $f\equiv 0$, and we
label it by condition $(\mathrm{wEH}1)$.

\begin{definition}[condition (wEH$1$)]
\label{df:wehi2}We say that condition $(\mathrm{wEH}1)$ holds if there exist
two universal constants $\sigma \in (0,1)$ and $\delta _{1}\in (0,1/4)$ such
that, for any two concentric balls $B_{R}:=B(x_{0},R)\supset
B(x_{0},r)=:B_{r}$ with $R\in (0,\sigma \overline{R})$, $r\in (0,\delta
_{1}R)$, any function $f\in L^{\infty }(B_{R})$, any number $\eta _{1}\in
(0,1]$ and for any function $u\in \mathcal{F}^{\prime }\cap L^{\infty }$
which is non-negative, $f$-superharmonic in $B_{R}$, if for some $a>0$, 
\begin{equation*}
\omega _{B_{r}}(\left\{ u\geq a\right\} )=\frac{\mu (B_{r}\cap \left\{ u\geq
a\right\} )}{\mu (B_{r})}\geq \eta _{1},
\end{equation*}%
then 
\begin{equation}
\einf_{{B_{4r}}}u>\varepsilon _{1}a-w(B_{r})\left( T_{\frac{3}{4}%
B_{R},B_{R}}(u_{-})+||f||_{L^{\infty }(B_{R})}\right) ,  \label{eq:vol_402}
\end{equation}%
where $\varepsilon _{1}=\varepsilon _{1}(\eta _{1})\in (0,1)$ depends only
on $\eta _{1}$ (independent of $x_{0},r,R,f,u,a$).
\end{definition}

We show that condition $(\mathrm{wEH})$ defined in Definition \ref{df:wehi1}
is equivalent to condition $(\mathrm{wEH}1)$.

\begin{proposition}
\label{prop:4.2}Assume that $(\mathcal{E},\mathcal{F})$ is a Dirichlet form
in $L^{2}(M,\mu)$. If condition $(\mathrm{VD})$ holds, then 
\begin{equation*}
(\mathrm{wEH})\Leftrightarrow (\mathrm{wEH}1).
\end{equation*}
\end{proposition}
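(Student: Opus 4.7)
I would prove the two implications separately. The forward direction $(\mathrm{wEH})\Rightarrow (\mathrm{wEH}1)$ is a Chebyshev-style estimate using only $(\mathrm{VD})$ and the doubling of $w$ in (\ref{eq:vol_0}); the reverse direction $(\mathrm{wEH}1)\Rightarrow (\mathrm{wEH})$ is the main work and is done by a distribution-function / layer-cake argument combined with a scale-iteration of $(\mathrm{wEH}1)$.

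For $(\mathrm{wEH})\Rightarrow (\mathrm{wEH}1)$, fix $u\geq 0$ that is $f$-superharmonic in $B_R$ and assume $\omega_{B_r}(\{u\geq a\})\geq \eta_1$. Choose $\delta_1:=\delta/4$ where $\delta$ is the constant in $(\mathrm{wEH})$, so that $4r\leq \delta R$. By Chebyshev and $(\mathrm{VD})$,
\[
\Big(\fint_{B_{4r}}u^p\,d\mu\Big)^{1/p}\geq \Big(\frac{\mu(B_r)}{\mu(B_{4r})}\,\eta_1\Big)^{1/p}a\geq C_\mu^{-2/p}\eta_1^{1/p}\,a;
\]
now apply $(\mathrm{wEH})$ to the pair $B_{4r}\subset B_R$ and invoke (\ref{eq:vol_0}) to replace $w(B_{4r})$ by $C_2 4^{\beta_2} w(B_r)$, obtaining
\[
\einf_{B_{4r}}u \;\geq\; c\,\eta_1^{1/p}\,a \;-\; C\, w(B_r)\bigl(T_{\frac{3}{4}B_R,B_R}(u_-)+\|f\|_{L^\infty(B_R)}\bigr).
\]
Setting $\varepsilon_1(\eta_1):=c\,\eta_1^{1/p}/(2C)$ and splitting into the two cases ``$C w(B_r)(T+\|f\|)\geq \tfrac{c\eta_1^{1/p}}{2}a$'' and its negation yields $(\mathrm{wEH}1)$: in the first case the right-hand side of $(\mathrm{wEH}1)$ is non-positive and the inequality holds trivially from $u\geq 0$, while in the second case the tail term is absorbed directly.

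For $(\mathrm{wEH}1)\Rightarrow (\mathrm{wEH})$, set $L_a:=\omega_{B_r}(\{u\geq a\})$ and $K:=\einf_{B_{4r}}u+w(B_r)(T+\|f\|)$. The contrapositive of $(\mathrm{wEH}1)$ yields $L_a<\eta$ whenever $a>K/\varepsilon_1(\eta)$, valid for every $\eta\in(0,1]$. Via the layer-cake identity
\[
\fint_{B_r}u^p\,d\mu \;=\; p\int_0^\infty a^{p-1}L_a\,da,
\]
condition $(\mathrm{wEH})$ would follow once a polynomial decay $L_a\leq C(K/a)^q$ is established for some universal $q>0$, since then the integral converges for every $p\in(0,q)$, giving $(\fint u^p)^{1/p}\leq C'K\leq C'(\einf_{B_r}u+w(B_r)(T+\|f\|))$ by monotonicity $\einf_{B_{4r}}u\leq \einf_{B_r}u$. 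To extract such a rate I would iterate $(\mathrm{wEH}1)$ along a geometric sequence of thresholds $a_k:=\lambda^k K$ and at dyadic radii $r/4^k\leq \delta_1 R$, and show inductively that $L_{a_k}<\eta_0^k$ for appropriately chosen universal constants $\lambda>1$ and $\eta_0\in(0,1)$, using $(\mathrm{VD})$ and (\ref{eq:vol_0}) to migrate the bounds between scales.

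The main obstacle is this extraction of polynomial decay from $(\mathrm{wEH}1)$: the function $\varepsilon_1(\cdot)$ is supplied only pointwise for each $\eta\in(0,1]$ with no a priori quantitative lower bound as $\eta\to 0^+$, so the iteration has to create the rate out of the simultaneous validity of $(\mathrm{wEH}1)$ at every admissible $\eta$ and every scale $\rho\leq \delta_1 R$. In addition one must book-keep the tail $w(B_r)(T+\|f\|)$ along the induction and check that the relationship between $\einf_{B_{4r}}u$ and $\einf_{B_r}u$ is preserved under the change of scales; these constitute the bulk of the technical work in Proposition~\ref{prop:4.2}.
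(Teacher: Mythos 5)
Your forward direction $(\mathrm{wEH})\Rightarrow(\mathrm{wEH}1)$ is essentially identical to the paper's: Chebyshev on $B_{4r}$, condition $(\mathrm{VD})$ to compare $\mu(B_{4r})$ with $\mu(B_r)$, and (\ref{eq:vol_0}) to trade $w(x_0,4r)$ for $w(B_r)$; your extra case split to absorb the constant in front of the tail is harmless (the paper avoids it by dividing the whole inequality by $C_24^{\beta_2}C_H$, which puts coefficient exactly $1$ on the tail term).

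The reverse direction is where there is a genuine gap. You correctly identify the target — a polynomial decay $\omega_{B_r}(\{u>t\})\leq C\,(K/t)^{\gamma}$ of the distribution function, which then yields $(\mathrm{wEH})$ for any $p<\gamma$ via the layer-cake formula — but the mechanism you propose for producing it does not work and misses the essential tool. Iterating $(\mathrm{wEH}1)$ at concentric dyadic radii $r/4^k$ centered at $x_0$ only controls $u$ near the center of $B_r$; the superlevel sets $\{u\geq a_k\}\cap B_r$ whose measure you must track can sit anywhere in $B_r$, and no sequence of concentric balls lets you convert ``$\{u\geq a_{k}\}$ has density $\geq\eta$ in some small ball $B_{5\rho}(x)$ with $x\in B_r$'' into a pointwise lower bound on $u$ on a dilate of that ball. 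The paper's proof does exactly this via the Krylov--Safonov covering lemma on doubling spaces: it sets $A_t^{i}:=\{x\in B_r:\,u(x)>t\varepsilon^{i}-T/(1-\varepsilon)\}$, applies $(\mathrm{wEH}1)$ at \emph{every} center $x\in B_r$ and \emph{every} radius $\rho<r$ on the pair $\bigl(B_{5\rho}(x),B(x,R/2)\bigr)$ to show $[A_t^{i-1}]_{\eta}\subset A_t^{i}$, and then invokes the covering lemma's dichotomy (either $[E]_\eta=B_r$ or $\mu([E]_\eta)\geq\eta^{-1}\mu(E)$) to force the measure of $A_t^{i}$ to grow geometrically in $i$ until it fills $B_r$; the number of steps needed is $\asymp\log_{\eta}\bigl(\mu(A_t^0)/\mu(B_r)\bigr)$, which is precisely what produces the exponent $\gamma=\log_\varepsilon\eta$. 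Note also that the iteration uses a single fixed $\eta$ (the paper takes $\varepsilon:=\varepsilon_1(C_\mu^{-3}\eta)$), so the lack of a quantitative lower bound on $\varepsilon_1(\eta)$ as $\eta\to0^{+}$, which you single out as the main obstacle, is not actually an issue; the obstacle is the covering argument itself, which your sketch does not supply.
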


\begin{proof}
The proof was essentially given in \cite[Proof of Theorem 3.1 and Remark 3.9]%
{ChenKumagaiWang.2016.EH} wherein the jump kernel is assumed to exist and $%
f\equiv 0$. For the reader's convenience, we sketch the proof. We mention
that the jump kernel here may not exist.

We first show $(\mathrm{wEH})$ $\Rightarrow $ $(\mathrm{wEH}1)$.

\noindent Assume that condition $(\mathrm{wEH})$ holds. Let $u\in \mathcal{%
F^{\prime }}\cap L^{\infty }$ be non-negative, $f$-superharmonic in a ball $%
B_{R}(x_{0})$ with $R\in (0,\sigma \overline{R})$. Let $\eta _{1}$ be any
number in $(0,1]$ and $r$ any number in $(0,\delta R/4)$, where constant $%
\delta $ is the same as in condition $(\mathrm{wEH})$. Assume that 
\begin{equation}
\omega _{B_{r}}(\left\{ u\geq a\right\} )\geq \eta _{1}  \label{43}
\end{equation}%
for some $a>0$. We will show that condition $(\mathrm{wEH}1)$ holds with $%
\delta _{1}=\frac{\delta }{4}$ and 
\begin{equation}
\varepsilon _{1}(\eta _{1})=\left( C_{2}4^{\beta _{2}}C_{H}\right)
^{-1}\left( \frac{\eta _{1}}{C_{\mu }^{2}}\right) ^{1/p},  \label{45}
\end{equation}%
where constants $C_{2},\beta _{2}$ are the same as in (\ref{eq:vol_0}) and $%
C_{H},p$ the same as in condition $(\mathrm{wEH})$, while the number $C_{\mu
}$ comes from (\ref{eq:vol_1}). It suffices to show (\ref{eq:vol_402}).

Indeed, we have by (\ref{eq:vol_65}), with $r$ replaced by $4r$, that 
\begin{equation}
\left( \fint_{B_{4r}}u^{p}d\mu \right) ^{1/p}\leq C_{H}\left( \einf%
_{B_{4r}}u+w(x_{0},4r)\left( T_{\frac{3}{4}B_{R},B_{R}}(u_{-})+||f||_{L^{%
\infty }(B_{R})}\right) \right) .  \label{eq:vol_411}
\end{equation}%
Since $\mu (B_{4r})\leq C_{\mu }^{2}\mu (B_{r})$ by condition $(\mathrm{VD})$%
, we have by (\ref{43}) 
\begin{align}
\left( \fint_{B_{4r}}u^{p}d\mu \right) ^{1/p}& \geq \left( \frac{1}{C_{\mu
}^{2}\mu (B_{r})}\int_{B_{r}}u^{p}d\mu \right) ^{1/p}\geq \left( \frac{1}{%
C_{\mu }^{2}\mu (B_{r})}\int_{B_{r}\cap \{u\geq a\}}a^{p}d\mu \right) ^{1/p}
\notag \\
& =a\left( \frac{\omega _{B_{r}}(\{u\geq a\})}{C_{\mu }^{2}}\right)
^{1/p}\geq \left( \frac{\eta _{1}}{C_{\mu }^{2}}\right) ^{1/p}a.
\label{eq:vol_412}
\end{align}%
By the second inequality in (\ref{eq:vol_0}),%
\begin{equation}
w(B_{r})=\frac{w(x_{0},r)}{w(x_{0},4r)}w(x_{0},4r)\geq \frac{1}{%
C_{2}4^{\beta _{2}}}w(x_{0},4r).  \label{44}
\end{equation}

Therefore, plugging (\ref{eq:vol_412}) and (\ref{44}) into (\ref{eq:vol_411}%
), we obtain%
\begin{eqnarray*}
\left( \frac{\eta _{1}}{C_{\mu }^{2}}\right) ^{1/p}a &\leq &\left( \fint%
_{B_{4r}}u^{p}d\mu \right) ^{1/p}\leq C_{H}\left( \einf%
_{B_{4r}}u+w(x_{0},4r)\left( T_{\frac{3}{4}B_{R},B_{R}}(u_{-})+||f||_{L^{%
\infty }(B_{R})}\right) \right) \\
&\leq &C_{H}\left( \einf_{B_{4r}}u+C_{2}4^{\beta _{2}}w(x_{0},r)\left( T_{%
\frac{3}{4}B_{R},B_{R}}(u_{-})+||f||_{L^{\infty }(B_{R})}\right) \right) \\
&\leq &C_{2}4^{\beta _{2}}C_{H}\left( \einf_{B_{4r}}u+w(B_{r})\left( T_{%
\frac{3}{4}B_{R},B_{R}}(u_{-})+||f||_{L^{\infty }(B_{R})}\right) \right) ,
\end{eqnarray*}%
which gives that%
\begin{align*}
\einf_{B_{4r}}u& \geq \left( C_{2}4^{\beta _{2}}C_{H}\right) ^{-1}\left( 
\frac{\eta _{1}}{C_{\mu }^{2}}\right) ^{1/p}a-w(B_{r})\left( T_{\frac{3}{4}%
B_{R},B_{R}}(u_{-})+||f||_{L^{\infty }(B_{R})}\right) \\
& =\varepsilon _{1}a-w(B_{r})\left( T_{\frac{3}{4}%
B_{R},B_{R}}(u_{-})+||f||_{L^{\infty }(B_{R})}\right) ,
\end{align*}%
thus showing (\ref{eq:vol_402}) with $\varepsilon _{1}$ given by (\ref{45}).
Hence, condition $(\mathrm{wEH}1)$ holds.

We show the opposite implication $(\mathrm{wEH}1)$ $\Rightarrow $ $(\mathrm{%
wEH})$.

\noindent We will use the Krylov-Safonov covering lemma on the doubling
space as follows, see for example \cite[Lemma 3.8]{ChenKumagaiWang.2016.EH}
or \cite[Lemma 7.2]{KinnunenShanmugakubgam.2001.MM401}. Suppose that
condition $(\mathrm{VD})$ holds. Let $r$ be a number in $(0,\overline{R}/5)$
and $E\subset B_{r}(x_{0})$ a measurable set. For any number $\eta \in (0,1)$%
, we define 
\begin{equation*}
\lbrack E]_{\eta }=\bigcup_{0<\rho <r}\left\{ B_{5\rho }(x)\cap
B_{r}(x_{0}):x\in B_{r}(x_{0})\ \mathrm{and}\ \frac{\mu (E\cap B_{5\rho }(x))%
}{\mu (B_{\rho }(x))}>\eta \right\} .
\end{equation*}%
Then either 
\begin{equation*}
\lbrack E]_{\eta }=B_{r}(x_{0})
\end{equation*}%
or 
\begin{equation*}
\mu ([E]_{\eta })\geq \frac{1}{\eta }\mu (E).
\end{equation*}

Assume that condition $(\mathrm{wEH}1)$ holds. We show $(\mathrm{wEH})$.

To do this, let $\eta $ be any fixed number in $(0,1)$. Let $\sigma \in
(0,1) $ and $\delta _{1}\in (0,1/4)$ be the constants coming from condition $%
(\mathrm{wEH}1)$. Let $B_{R}:=B(x_{0},R)$ be any metric ball with $%
0<R<\sigma $$\overline{R}$ and $r$ any number in $(0,\frac{\delta _{1}}{10}%
R] $. Let $u\in \mathcal{F^{\prime }}\cap L^{\infty }$ be any function that
is non-negative, $f$-superharmonic in $B_{R}$. We define%
\begin{equation*}
A_{t}^{i}:=\left\{ x\in B_{r}(x_{0}):\ u(x)>t\varepsilon ^{i}-\frac{T}{%
1-\varepsilon }\right\}
\end{equation*}%
for any $t>0$ and $i\geq 0$, where constant $\varepsilon \in (0,1)$ will be
determined later and $T$ is given by 
\begin{equation}
T=C_{2}w(x_{0},5r)\left( T_{\frac{3}{4}B_{R},B_{R}}(u_{-})+||f||_{L^{\infty
}(B_{R})}\right)  \label{TT}
\end{equation}%
with constant $C_{2}$ as in (\ref{eq:vol_0}).

Obviously, we have $A_{t}^{i-1}\subset A_{t}^{i}$ for any $i\geq 1$. Let $x$
be any point in $B_{r}(x_{0})$ and $\rho $ be any number in $(0,r)$. If 
\begin{equation}
B_{5\rho }(x)\cap B_{r}(x_{0})\subset \lbrack A_{t}^{i-1}]_{\eta },
\label{uu-2}
\end{equation}%
which is equivalent to the fact that $\mu (A_{t}^{i-1}\cap B_{5\rho
}(x))>\eta \mu (B_{\rho }(x))$ by the definition of $[A_{t}^{i-1}]_{\eta }$,
then 
\begin{equation*}
\mu (A_{t}^{i-1}\cap B_{5\rho }(x))>\eta \mu (B_{\rho }(x))\geq C_{\mu
}^{-3}\eta \mu (B_{5\rho }(x)),
\end{equation*}%
since $\mu (B_{\rho }(x))\geq C_{\mu }^{-3}\mu (B_{5\rho }(x))$ by using
condition $(\mathrm{VD})$. Let $\varepsilon :=\varepsilon _{1}(C_{\mu
}^{-3}\eta )$. Since $B(x,\frac{R}{2})\subset B(x_{0},R)$, the function $u$
is non-negative, $f$-superharmonic in $B(x,\frac{R}{2})$. Noting that $5\rho
<5r\leq 5\frac{\delta _{1}}{10}R=\delta _{1}\frac{R}{2}$ and%
\begin{equation*}
\frac{\mu \left( B_{5\rho }(x)\cap \{u\geq t\varepsilon ^{i-1}-\frac{T}{%
1-\varepsilon }\}\right) }{\mu (B_{5\rho }(x))}\geq \frac{\mu \left(
B_{5\rho }(x)\cap A_{t}^{i-1}\right) }{\mu (B_{5\rho }(x))}\geq C_{\mu
}^{-3}\eta ,
\end{equation*}%
we apply condition $($\textrm{wEH}$1)$ on two concentric balls $B(x,\frac{R}{%
2}),B(x,5\rho )$ for $\eta _{1}=C_{\mu }^{-3}\eta $ and for those $t>0$ such
that 
\begin{equation*}
a:=t\varepsilon ^{i-1}-\frac{T}{1-\varepsilon }>0.
\end{equation*}

It follows that, using the fact that $w(x,5\rho )<w(x,5r)\leq
C_{2}w(x_{0},5r)$ by (\ref{eq:vol_0}), 
\begin{align}
\einf_{B_{20\rho }(x)}u& >\varepsilon \left( t\varepsilon ^{i-1}-\frac{T}{%
1-\varepsilon }\right) -w(x,5\rho )\left( T_{\frac{3}{4}B(x,\frac{R}{2}),B(x,%
\frac{R}{2})}(u_{-})+||f||_{L^{\infty }(B(x,\frac{R}{2}))}\right)  \notag \\
& \geq \varepsilon \left( t\varepsilon ^{i-1}-\frac{T}{1-\varepsilon }%
\right) -C_{2}w(x_{0},5r)\left( T_{\frac{3}{4}B_{R},B_{R}}(u_{-})+||f||_{L^{%
\infty }(B_{R})}\right)  \notag \\
& =\varepsilon \left( t\varepsilon ^{i-1}-\frac{T}{1-\varepsilon }\right)
-T=t\varepsilon ^{i}-\frac{T}{1-\varepsilon },  \label{uu-1}
\end{align}%
where we have used the fact that 
\begin{equation*}
T_{\frac{3}{4}B(x,\frac{R}{2}),B(x,\frac{R}{2})}(u_{-})\leq T_{\frac{3}{4}%
B_{R},B_{R}}(u_{-})
\end{equation*}%
by Proposition \ref{prop:3.1} since $B(x,\frac{R}{2})\subset \frac{3}{4}%
B_{R}=B(x_{0},\frac{3}{4}R)$ for any $x$ in $B_{r}(x_{0})$. Clearly, the
inequality (\ref{uu-1}) also holds for those $t$ when $t\varepsilon ^{i-1}-%
\frac{T}{1-\varepsilon }\leq 0$, and hence, it is true for any $t>0$,
provided that (\ref{uu-2}) is satisfied.

Therefore, for any ball $B_{5\rho }(x)$ satisfying (\ref{uu-2}), we have $%
B_{5\rho }(x)\cap B_{r}(x_{0})\subset A_{t}^{i}$, which implies that 
\begin{equation*}
\lbrack A_{t}^{i-1}]_{\eta }\subset A_{t}^{i}\text{ \ for any }t>0\text{ and
any }i\geq 1.
\end{equation*}%
By the Krylov-Safonov covering lemma with $E=A_{t}^{i-1}$, we must have that
for any $t>0$ and any $i\geq 1$, either $A_{t}^{i-1}=B_{r}(x_{0})$ (thus $%
A_{t}^{i}=B_{r}(x_{0})$) or 
\begin{equation}
\frac{1}{\eta }\mu (A_{t}^{i-1})\leq \mu ([A_{t}^{i-1}]_{\eta })\leq \mu
(A_{t}^{i}).  \label{eq:vol_414}
\end{equation}%
We choose an integer $j\geq 1$ such that 
\begin{equation*}
\eta ^{j}<\frac{\mu (A_{t}^{0})}{\mu (B_{r}(x_{0}))}\leq \eta ^{j-1}.
\end{equation*}%
Suppose that $A_{t}^{j-1}\neq B_{r}(x_{0})$. Using the fact that $%
A_{t}^{i-1}\subset A_{t}^{i}$, we have $A_{t}^{k}\neq B_{r}(x_{0})$ for all $%
0\leq k\leq j-1$. Hence, we obtain from (\ref{eq:vol_414}) that 
\begin{equation*}
\mu (A_{t}^{j-1})\geq \frac{1}{\eta }\mu (A_{t}^{j-2})\geq \cdot \cdot \cdot
\geq \frac{1}{\eta ^{j-1}}\mu (A_{t}^{0})\geq \eta \mu (B_{r}(x_{0})).
\end{equation*}%
Since $\eta \in (0,1)$, this inequality holds trivially when $%
A_{t}^{j-1}=B_{r}(x_{0})$. Therefore, using condition $(\mathrm{wEH}1)$
again, we have 
\begin{align*}
\einf_{B_{4r}(x_{0})}u& >\varepsilon _{1}(\eta )\left( t\varepsilon ^{j-1}-%
\frac{T}{1-\varepsilon }\right) -w(B_{r})\left( T_{\frac{3}{4}%
B_{R},B_{R}}(u_{-})+||f||_{L^{\infty }(B_{R})}\right) \\
& \geq \varepsilon _{1}(\eta )\left( t\varepsilon ^{j-1}-\frac{T}{%
1-\varepsilon }\right) -T\geq \varepsilon _{1}(\eta )t\varepsilon ^{j-1}-%
\frac{\varepsilon _{1}(\eta )+1}{1-\varepsilon }T \\
& \geq \varepsilon _{1}(\eta )t\left( \frac{\mu (A_{t}^{0})}{\mu
(B_{r}(x_{0}))}\right) ^{\frac{1}{\gamma }}-\frac{\varepsilon _{1}(\eta )+1}{%
1-\varepsilon }T,
\end{align*}%
where $\gamma =\log _{\varepsilon }\eta $. From this, it follows that, for
any $t>0$ and any $r\in (0,\frac{\delta _{1}}{10}R]$, 
\begin{equation*}
\frac{\mu (A_{t}^{0})}{\mu (B_{r}(x_{0}))}\leq \frac{c_{3}}{t^{\gamma }}%
\left( \einf_{B_{4r}(x_{0})}u+\frac{T}{1-\varepsilon }\right) ^{\gamma }
\end{equation*}%
for some positive constant $c_{3}$ depending only on $\eta $ (for example, $%
c_{3}=\frac{\varepsilon _{1}(\eta )+1}{\varepsilon _{1}(\eta )}$).

Therefore, for any $0<p<\gamma $ and any $a>0$, 
\begin{align*}
\fint_{B_{r}(x_{0})}u^{p}d\mu & =p\int_{0}^{\infty }t^{p-1}\frac{\mu
(B_{r}(x_{0})\cap \{u>t\})}{\mu (B_{r}(x_{0}))}dt\leq p\int_{0}^{\infty
}t^{p-1}\frac{\mu (A_{t}^{0})}{\mu (B_{r}(x_{0}))}dt \\
& \leq p\left[ \int_{0}^{a}t^{p-1}dt+c_{3}\left( \einf_{B_{4r}(x_{0})}u+%
\frac{T}{1-\varepsilon }\right) ^{\gamma }\int_{a}^{\infty }t^{p-1-\gamma }dt%
\right] \\
& \leq c_{4}(p,\eta ,\varepsilon )\left[ a^{p}+\left( \einf_{B_{4r}(x_{0})}u+%
\frac{T}{1-\varepsilon }\right) ^{\gamma }a^{p-\gamma }\right] .
\end{align*}%
By choosing $a$ such that 
\begin{equation*}
a=\einf_{B_{4r}(x_{0})}u+\frac{T}{1-\varepsilon },
\end{equation*}%
we conclude by using (\ref{TT}), (\ref{eq:vol_0}) that for any $0<r\leq 
\frac{\delta _{1}}{10}R$ 
\begin{align*}
\fint_{B_{r}(x_{0})}u^{p}d\mu & \leq 2c_{4}(p,\eta ,\varepsilon )\left( \einf%
_{B_{4r}(x_{0})}u+\frac{T}{1-\varepsilon }\right) ^{p} \\
& \leq 2c_{4}(p,\eta ,\varepsilon )\left( \einf_{B_{4r}(x_{0})}u+\frac{%
C_{2}^{2}5^{\beta _{2}}w(B_{r})\left( T_{\frac{3}{4}%
B_{R},B_{R}}(u_{-})+||f||_{L^{\infty }(B_{R})}\right) }{1-\varepsilon }%
\right) ^{p} \\
& \leq c_{5}(p,\eta ,\varepsilon )\left( \einf_{B_{r}(x_{0})}u+w(B_{r})%
\left( T_{\frac{3}{4}B_{R},B_{R}}(u_{-})+||f||_{L^{\infty }(B_{R})}\right)
\right) ^{p},
\end{align*}%
thus showing that condition $(\mathrm{wEH})$ holds with $\delta :=\frac{%
\delta _{1}}{10}$. The proof is complete.
\end{proof}

We introduce condition $(\mathrm{wEH}2)$ (cf. \cite[Lemma 7.2]%
{GrigoryanHuLau.2015.JMSJ1485} for the local Dirichlet form).

\begin{definition}[condition (\textrm{wEH}$2$)]
\label{df:wehi3}We say that condition $(\mathrm{wEH}2)$ holds if there exist
three universal constants $\sigma $, $\delta _{2}$ in $(0,1)$ and $C>0$ such
that, for any two concentric balls $B_{R}:=B(x_{0},R)\supset
B(x_{0},r)=:B_{r}$ with $R\in (0,\sigma \overline{R})$, $r\in (0,\delta
_{2}R)$, any number $a>0$, any function $f\in L^{\infty }(B_{R})$, and for
any $u\in \mathcal{F}^{\prime }\cap L^{\infty }$ which is non-negative, $f$%
-superharmonic in $B_{R}$, we have 
\begin{equation}
\einf_{B_{r}}u\geq a\exp \left( -\frac{C}{\omega _{B_{r}}(\{u\geq a\})}%
\right) -w(B_{r})\left( T_{\frac{3}{4}B_{R},B_{R}}(u_{-})+||f||_{L^{\infty
}(B_{R})}\right) .  \label{eq:vol_403}
\end{equation}%
We remark that the constants $\sigma $, $\delta _{2},C$ are all independent
of $B_{R},B_{r},a,f$ and $u$.
\end{definition}

\begin{remark}
\label{R-EH2}Let $a>0$. If 
\begin{equation*}
\omega _{B_{r}}(\{u\geq a\})=0,
\end{equation*}%
then (\ref{eq:vol_403}) is trivially satisfied since $u\geq 0$ in $B_{r}$.
On the other hand, if 
\begin{equation*}
\omega _{B_{r}}(\{u\geq a\})=1,
\end{equation*}%
then (\ref{eq:vol_403}) is also trivially satisfied since $u\geq a$ in $%
B_{r} $. Thus, in order to show (\ref{eq:vol_403}), it suffices to consider
the case $0<\omega _{B_{r}}(\{u\geq a\})<1$ only.
\end{remark}

We have the following.

\begin{proposition}
\label{prop:4.3}Let $(\mathcal{E},\mathcal{F})$ be a Dirichlet form in $%
L^{2} $. Then%
\begin{equation*}
(\mathrm{wEH})\Rightarrow (\mathrm{wEH}2).
\end{equation*}
\end{proposition}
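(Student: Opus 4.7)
The plan is to extract the desired exponential-type lower bound directly from condition $(\mathrm{wEH})$ by bounding the $L^p$-mean from below via the super-level set, then absorbing the resulting polynomial factor into an exponential by routine calculus.

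Fix $R\in(0,\sigma\overline{R})$, $r\in(0,\delta R)$ with $\sigma,\delta$ the constants from $(\mathrm{wEH})$, and take $\delta_{2}:=\delta$. Let $u$ be non-negative, $f$-superharmonic in $B_R$, let $a>0$, and abbreviate
\[
\omega:=\omega_{B_r}(\{u\geq a\}),\qquad T:=T_{\tfrac{3}{4}B_R,B_R}(u_-)+\Vert f\Vert_{L^{\infty}(B_R)}.
\]
By Remark \ref{R-EH2}, I need only treat $\omega\in(0,1)$. Since $u\geq a$ on a subset of $B_r$ with relative measure $\omega$,
\[
\left(\fint_{B_r}u^{p}d\mu\right)^{1/p}\geq \left(\frac{1}{\mu(B_r)}\int_{B_r\cap\{u\geq a\}}a^{p}d\mu\right)^{1/p}=a\,\omega^{1/p}.
\]
Combining with $(\mathrm{wEH})$ yields
\[
a\,\omega^{1/p}\leq C_{H}\left(\einf_{B_r}u+w(B_r)T\right),
\]
hence
\begin{equation}\label{eq:wEH2plan}
\einf_{B_r}u\geq \frac{a\,\omega^{1/p}}{C_{H}}-w(B_r)T.
\end{equation}

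The remaining task is purely elementary: choose a universal $C>0$ so that $\omega^{1/p}/C_{H}\geq \exp(-C/\omega)$ for every $\omega\in(0,1]$. Taking logarithms, this is equivalent to
\[
C\geq \omega\log C_{H}+\frac{\omega}{p}\log(1/\omega).
\]
Since the function $\omega\mapsto \omega\log(1/\omega)$ attains its maximum $1/e$ on $(0,1]$, and $\omega\log C_{H}\leq \log C_{H}$ when $C_{H}\geq 1$, the choice $C:=\log C_{H}+1/(pe)$ (which depends only on the universal constants $C_{H},p$ of $(\mathrm{wEH})$) does the job. Plugging this back into \eqref{eq:wEH2plan} gives exactly \eqref{eq:vol_403}, which proves $(\mathrm{wEH}2)$.

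There is essentially no obstacle here: the content is packed entirely into the bound $\left(\fint u^{p}\right)^{1/p}\geq a\omega^{1/p}$ from Chebyshev, and the only subtlety is verifying that $\omega\mapsto a\omega^{1/p}$ dominates $a\exp(-C/\omega)$ up to a universal multiplicative constant — which follows from boundedness of $\omega\log(1/\omega)$ on $(0,1]$. No additional hypotheses such as $(\mathrm{VD})$ are needed for this implication.
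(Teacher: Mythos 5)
Your proof is correct and takes essentially the same approach as the paper: both start from the Chebyshev-type bound $\left(\fint_{B_r}u^{p}d\mu\right)^{1/p}\geq a\,\omega^{1/p}$ and then absorb the power $\omega^{1/p}$ together with the constant $C_H$ into an exponential $\exp(-C/\omega)$. The only cosmetic difference is the elementary inequality used for the absorption: the paper applies $\ln x\geq 1-1/x$ to obtain $C=\ln C_H+1/p$, while you use $\sup_{(0,1]}\omega\ln(1/\omega)=1/e$ to obtain the marginally sharper $C=\ln C_H+1/(pe)$.
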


\begin{proof}
Assume that condition $(\mathrm{wEH})$ holds with four constants $p,\delta
,\sigma $ in $(0,1)$ and $C_{H}\geq 1$. Fix a ball $B_{R}:=B(x_{0},R)$ with $%
R\in (0,\sigma \overline{R})$ and fix another concentric ball $%
B_{r}:=B(x_{0},r)$ with $0<r\leq \delta R$. Let $u\in \mathcal{F^{\prime }}%
\cap L^{\infty }$ be non-negative, $f$-superharmonic in $B_{R}$. Then 
\begin{equation}
\left( \frac{1}{\mu (B_{r})}\int_{B_{r}}u^{p}d\mu \right) ^{1/p}\leq
C_{H}\left( \einf_{{B_{r}}}u+w(B_{r})\left( T_{\frac{3}{4}%
B_{R},B_{R}}(u_{-})+||f||_{L^{\infty }(B_{R})}\right) \right) .
\label{eq:vol_406}
\end{equation}%
In order to show condition $(\mathrm{wEH}2)$, we shall prove that (\ref%
{eq:vol_403}) holds with 
\begin{equation}
\delta _{2}=\delta \text{ \ and \ }C:=\ln C_{H}+1/p.  \label{51}
\end{equation}

To see this, let $a$ be any positive number. By Remark \ref{R-EH2}, we may
assume 
\begin{equation*}
0<\omega _{B_{r}}(\{u\geq a\})<1.
\end{equation*}%
Note that, using the elementary inequality $\ln x\geq 1-\frac{1}{x}$ for any 
$0<x\leq 1$, 
\begin{align}
\left( \frac{1}{\mu (B_{r})}\int_{B_{r}}u^{p}d\mu \right) ^{1/p}& \geq
\left( \frac{1}{\mu (B_{r})}\int_{B_{r}\cap \{u\geq a\}}a^{p}d\mu \right)
^{1/p}=\left( a^{p}\omega _{B_{r}}(\{u\geq a\})\right) ^{1/p}  \notag \\
& =a\exp \left( \frac{1}{p}\ln \omega _{B_{r}}(\{u\geq a\})\right) \geq
a\exp \left( \frac{1}{p}\left( 1-\frac{1}{\omega _{B_{r}}(\{u\geq a\})}%
\right) \right)  \notag \\
& \geq a\exp \left( -\frac{1/p}{\omega _{B_{r}}(\{u\geq a\})}\right) =a\exp
\left( -\frac{C-\ln C_{H}}{\omega _{B_{r}}(\{u\geq a\})}\right)  \notag \\
& =a\exp \left( \frac{\ln C_{H}}{\omega _{B_{r}}(\{u\geq a\})}\right) \cdot
\exp \left( -\frac{C}{\omega _{B_{r}}(\{u\geq a\})}\right)  \notag \\
& \geq aC_{H}\exp \left( -\frac{C}{\omega _{B_{r}}(\{u\geq a\})}\right) 
\text{ \ (since }\omega _{B_{r}}(\{u\geq a\})\leq 1\text{)}.
\label{eq:vol_407}
\end{align}%
Plugging (\ref{eq:vol_407}) into (\ref{eq:vol_406}) and then dividing by $%
C_{H}$ on the both sides, we conclude that 
\begin{equation*}
a\exp \left( -\frac{C}{\omega _{B_{r}}(\{u\geq a\})}\right) \leq \einf_{{%
B_{r}}}u+w(B_{r})\left( T_{\frac{3}{4}B_{R},B_{R}}(u_{-})+||f||_{L^{\infty
}(B_{R})}\right) ,
\end{equation*}%
thus showing that (\ref{eq:vol_403}) holds with constants $\delta _{2},C$
chosen as in (\ref{51}). The proof is complete.
\end{proof}

We introduce condition $(\mathrm{wEH}3)$.

\begin{definition}[condition (\textrm{wEH}$3$)]
\label{df:wehi4}We say that condition $(\mathrm{wEH}3)$ holds if there exist
two universal constants $\sigma ,\delta _{3}$ in $(0,1)$ such that, for any
two concentric balls $B_{R}:=B(x_{0},R)\supset B(x_{0},r)=:$ $B_{r}$ with $%
R\in (0,\sigma \overline{R})$, $r\in (0,\delta _{3}R]$, any number $\eta
_{3}\in (0,1]$, any function $f\in L^{\infty }(B_{R})$, and for any $u\in 
\mathcal{F}^{\prime }\cap L^{\infty }$ which is non-negative, $f$%
-superharmonic in $B_{R}$, if for some $a>0$, 
\begin{equation*}
\omega _{B_{r}}(\{u\geq a\})=\frac{\mu (B_{r}\cap \left\{ u\geq a\right\} )}{%
\mu (B_{r})}\geq \eta _{3}
\end{equation*}%
and 
\begin{equation*}
w(B_{r})\left( T_{\frac{3}{4}B_{R},B_{R}}(u_{-})+||f||_{L^{\infty
}(B_{R})}\right) \leq F(\eta _{3})a
\end{equation*}%
for a map $F:(0,1]\longmapsto (0,1]$, then we have 
\begin{equation}
\einf_{B_{r}}u\geq F(\eta _{3})a.  \label{eq:vol_404}
\end{equation}
\end{definition}

We show condition $(\mathrm{wEH}2)$ alone implies condition $(\mathrm{wEH}3)$
for any Dirichlet form in $L^{2}$.

\begin{proposition}
\label{P1}Let $(\mathcal{E},\mathcal{F})$ be a Dirichlet form in $L^{2}$,
then 
\begin{equation*}
(\mathrm{wEH}2)\Rightarrow (\mathrm{wEH}3).
\end{equation*}
\end{proposition}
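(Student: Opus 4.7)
The plan is to deduce $(\mathrm{wEH}3)$ directly from $(\mathrm{wEH}2)$ with no further machinery, essentially by monotonicity of the exponential and an explicit choice of the map $F$.

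Suppose $(\mathrm{wEH}2)$ holds with universal constants $\sigma, \delta_{2} \in (0,1)$ and $C > 0$. I will set $\delta_{3} := \delta_{2}$ (with the same $\sigma$) and propose the candidate map
\begin{equation*}
F(\eta_{3}) := \tfrac{1}{2}\exp\!\left(-\frac{C}{\eta_{3}}\right), \qquad \eta_{3}\in (0,1].
\end{equation*}
Since $C/\eta_{3} > 0$ for every $\eta_{3} \in (0,1]$, we have $F(\eta_{3}) \in (0,1/2] \subset (0,1]$, so $F$ is a legitimate map into $(0,1]$.

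Now fix concentric balls $B_{r}\subset B_{R}$ with $R\in(0,\sigma\overline{R})$ and $r\in(0,\delta_{3}R]$, a number $a>0$, a function $f\in L^{\infty}(B_{R})$, and a non-negative $f$-superharmonic function $u \in \mathcal{F}'\cap L^{\infty}$ in $B_{R}$ satisfying the two hypotheses of $(\mathrm{wEH}3)$, namely $\omega_{B_{r}}(\{u\geq a\}) \geq \eta_{3}$ and $w(B_{r})\bigl(T_{\frac{3}{4}B_{R},B_{R}}(u_{-})+\|f\|_{L^{\infty}(B_{R})}\bigr) \leq F(\eta_{3})\,a$. Applying $(\mathrm{wEH}2)$ with the same $a$ and using monotonicity of $t \mapsto \exp(-C/t)$ on $(0,\infty)$ together with $\omega_{B_{r}}(\{u\geq a\}) \geq \eta_{3}$, I obtain
\begin{equation*}
\einf_{B_{r}} u \;\geq\; a\exp\!\left(-\frac{C}{\omega_{B_{r}}(\{u\geq a\})}\right) - w(B_{r})\bigl(T_{\frac{3}{4}B_{R},B_{R}}(u_{-})+\|f\|_{L^{\infty}(B_{R})}\bigr) \;\geq\; a\exp\!\left(-\frac{C}{\eta_{3}}\right) - F(\eta_{3})\,a.
\end{equation*}

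By the definition of $F$, the right-hand side equals $2F(\eta_{3})a - F(\eta_{3})a = F(\eta_{3})a$, which yields the desired conclusion $\einf_{B_{r}} u \geq F(\eta_{3})a$. There is really no obstacle here: the entire argument is a one-line monotonicity estimate, and the only point requiring a brief check is that $F$ takes values in $(0,1]$, which follows from $C>0$ and $\eta_{3} \leq 1$. The substantive content of the implication is the explicit extraction of the bound $F(\eta_{3}) = \tfrac{1}{2}e^{-C/\eta_{3}}$ from the form of $(\mathrm{wEH}2)$, and one sees in particular that $F$ decays super-polynomially (indeed exponentially) as $\eta_{3}\downarrow 0$, which is consistent with the expected behavior at small occupation densities.
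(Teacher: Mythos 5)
Your proof is correct and follows exactly the same route as the paper: take $\delta_3=\delta_2$, choose $F(\eta_3)=\tfrac{1}{2}\exp(-C/\eta_3)$, and conclude by monotonicity of $t\mapsto\exp(-C/t)$ applied to $\omega_{B_r}(\{u\geq a\})\geq\eta_3$ together with the assumed bound on the tail term. Nothing is missing.
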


\begin{proof}
Assume that condition $(\mathrm{wEH}2)$ holds with constants $\sigma ,\delta
_{2},C$. We shall show that condition $(\mathrm{wEH}3)$ holds with the same $%
\sigma $ and with $\delta _{3},F$ being given by 
\begin{equation}
\delta _{3}=\delta _{2}\text{ \ and \ }F(\eta _{3})=\frac{1}{2}\exp \left( -%
\frac{C}{\eta _{3}}\right) .  \label{52}
\end{equation}

To see this, fix a ball $B_{R}:=B(x_{0},R)$ with $R\in (0,\sigma \overline{R}%
)$ and fix another concentric ball $B_{r}:=B(x_{0},r)$ with $0<r\leq \delta
_{2}R$. Let $\eta _{3}\in (0,1]$ and $r\in (0,\delta _{2}R]$ be any two
numbers. Let $u\in \mathcal{F^{\prime }}\cap L^{\infty }$ be any function
that is non-negative, $f$-superharmonic in $B_{R}$. If for some $a>0$, 
\begin{equation*}
\omega _{B_{r}}(\{u\geq a\})\geq \eta _{3}
\end{equation*}%
and if 
\begin{equation*}
w(B_{r})\left( T_{\frac{3}{4}B_{R},B_{R}}(u_{-})+||f||_{L^{\infty
}(B_{R})}\right) \leq F(\eta _{3})a=\frac{1}{2}\exp \left( -\frac{C}{\eta
_{3}}\right) a,
\end{equation*}%
then by condition $(\mathrm{wEH}2)$, 
\begin{align*}
\einf_{B_{r}}u& \geq a\exp \left( -\frac{C}{\omega _{B_{r}}(\{u\geq a\})}%
\right) -w(B_{r})\left( T_{\frac{3}{4}B_{R},B_{R}}(u_{-})+||f||_{L^{\infty
}(B_{R})}\right) \\
& \geq a\exp \left( -\frac{C}{\eta _{3}}\right) -\frac{1}{2}\exp \left( -%
\frac{C}{\eta _{3}}\right) a=\frac{1}{2}\exp \left( -\frac{C}{\eta _{3}}%
\right) a=F(\eta _{3})a.
\end{align*}%
This proves that (\ref{eq:vol_404}) is true, and so condition $(\mathrm{wEH}%
3)$ holds. The proof is complete.
\end{proof}

The following shows that condition $(\mathrm{wEH}3)$ implies condition $(%
\mathrm{wEH}1)$.

\begin{proposition}
\label{prop:4.4} Let $(\mathcal{E},\mathcal{F})$ be a Dirichlet form in $%
L^{2}$. If condition $(\mathrm{VD})$ holds, then 
\begin{equation*}
(\mathrm{wEH}3)\Rightarrow (\mathrm{wEH}1).
\end{equation*}
\end{proposition}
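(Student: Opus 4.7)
\emph{Plan.} Since $(\mathrm{wEH}1)$ concludes about $\einf_{B_{4r}}u$ while $(\mathrm{wEH}3)$ concludes about $\einf_{B_r}u$ on the \emph{same} ball, the plan is to apply $(\mathrm{wEH}3)$ to the pair $(B_{4r},B_R)$ rather than to $(B_r,B_R)$. Setting $\delta_1:=\delta_3/4$ ensures that whenever $r<\delta_1 R$ one has $4r<\delta_3 R$, so the pair $(B_{4r},B_R)$ is admissible in $(\mathrm{wEH}3)$.

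First I would transfer the hypothesis $\omega_{B_r}(\{u\geq a\})\geq\eta_1$ from $B_r$ to the larger concentric ball $B_{4r}$ via $(\mathrm{VD})$: since $\mu(B_{4r})\leq C_\mu^{2}\mu(B_r)$,
\begin{equation*}
\omega_{B_{4r}}(\{u\geq a\})\geq\frac{\mu(B_r\cap\{u\geq a\})}{\mu(B_{4r})}\geq\frac{\eta_1}{C_\mu^{2}}=:\eta_3.
\end{equation*}
I would then set $\varepsilon_1(\eta_1):=F(\eta_3)/(C_2\,4^{\beta_2})$, which lies in $(0,1)$ because $C_2\geq 1$ and $\beta_2>0$ (these constants coming from (\ref{eq:vol_0})).

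Next I would split according to the size of the tail $T:=T_{\frac{3}{4}B_R,B_R}(u_-)+\|f\|_{L^\infty(B_R)}$. In the small-tail case $w(B_{4r})T\leq F(\eta_3)a$, condition $(\mathrm{wEH}3)$ applied to $(B_{4r},B_R)$ with parameter $\eta_3$ yields $\einf_{B_{4r}}u\geq F(\eta_3)a$, and since $F(\eta_3)>\varepsilon_1$ this delivers $\einf_{B_{4r}}u>\varepsilon_1 a-w(B_r)T$. In the large-tail case $w(B_{4r})T>F(\eta_3)a$, the doubling of $w$ in (\ref{eq:vol_0}) gives $w(B_r)\geq w(B_{4r})/(C_2\,4^{\beta_2})$, hence $w(B_r)T>\varepsilon_1 a$; therefore the right-hand side $\varepsilon_1 a-w(B_r)T$ is strictly negative, while $\einf_{B_{4r}}u\geq 0$ by the non-negativity of $u$ on $B_R\supset B_{4r}$.

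There is no serious obstacle; the only conceptual step is the shift from $B_r$ to $B_{4r}$ when invoking $(\mathrm{wEH}3)$, which is precisely what allows its stronger ``same-ball'' conclusion (available only under smallness of the tail) to furnish the weaker ``$4r$-ball, minus tail'' conclusion of $(\mathrm{wEH}1)$ once combined with the doubling of $\mu$ and of $w$.
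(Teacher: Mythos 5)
Your proof is correct and follows essentially the same route as the paper's: dilate the inner ball, transfer the occupation-measure hypothesis via $(\mathrm{VD})$, and split into a small-tail case (where $(\mathrm{wEH}3)$ applies and $F(\eta_3)>\varepsilon_1$ gives the strict inequality) and a large-tail case (where the right-hand side of (\ref{eq:vol_402}) is negative while $\einf_{B_{4r}}u\geq 0$). The only difference is that you apply $(\mathrm{wEH}3)$ to the pair $(B_{4r},B_R)$ with $\delta_1=\delta_3/4$, whereas the paper uses the pair $(B_{8r},B_{8r/\delta_3})$ with $\delta_1=\delta_3/8$; your choice keeps the tail term literally equal to the one in $(\mathrm{wEH}1)$ and so dispenses with the tail-monotonicity step (Proposition \ref{prop:3.1}) that the paper needs to compare $T_{\frac{3}{4}B_{8r/\delta_3},B_{8r/\delta_3}}(u_-)$ with $T_{\frac{3}{4}B_R,B_R}(u_-)$.
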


\begin{proof}
Assume that condition $(\mathrm{wEH}3)$ holds with constants $\sigma ,\delta
_{3}$ in $(0,1)$ and a map $F:(0,1]\rightarrow (0,1]$. We show that
condition $(\mathrm{wEH}1)$ holds with the same $\sigma $ and with constants%
\begin{equation}
\delta _{1}=\frac{\delta _{3}}{8}\text{ \ and \ }\varepsilon
_{1}:=\varepsilon _{1}(\eta _{1})=\frac{F(\eta _{1}/C_{\mu }^{3})}{%
C_{2}8^{\beta _{2}}},  \label{53}
\end{equation}%
so that $\delta _{1}\in (0,\frac{1}{4})$ and $\varepsilon _{1}=\varepsilon
_{1}(\eta _{1})\in (0,1)$, where constants $C_{\mu }$ is the same as in (\ref%
{eq:vol_1}) and $C_{2},\beta _{2}$ same as in (\ref{eq:vol_0}).

To see this, fix two concentric balls $B_{R}:=B(x_{0},R)\supset
B(x_{0},r)=:B_{r}$ with $R\in (0,\sigma \overline{R})$, $r\in (0,\delta
_{1}R]$. Let $\eta _{1}\in (0,1]$ be any fixed number. Let $u\in \mathcal{%
F^{\prime }}\cap L^{\infty }$ be any function that is non-negative, $f$%
-superharmonic in $B_{R}$. Suppose that for some $a>0$, 
\begin{equation}
\omega _{B_{r}}(\{u\geq a\})\geq \eta _{1}.  \label{eq:vol_408}
\end{equation}%
We need to show that (\ref{eq:vol_402}) is satisfied.

Indeed, since $r\leq \delta _{1}R=\delta _{3}R/8$ so that%
\begin{equation}
B_{\frac{8r}{\delta _{3}}}\subseteq B_{R},  \label{54-1}
\end{equation}%
the function $u$ is non-negative and $f$-superharmonic in $B_{\frac{8r}{%
\delta _{3}}}$. By (\ref{eq:vol_408}) and condition $(\mathrm{VD})$ 
\begin{align}
\omega _{\delta _{3}B_{\frac{8r}{\delta _{3}}}}(u\geq a)& =\omega
_{B_{8r}}(\{u\geq a\})=\frac{\mu (B_{8r}\cap \{u\geq a\})}{\mu (B_{8r})}\geq 
\frac{\mu (B_{r}\cap \{u\geq a\})}{\mu (B_{8r})}  \notag \\
& =\frac{\omega _{B_{r}}(\{u\geq a\})\mu (B_{r})}{\mu (B_{8r})}\geq \frac{%
\eta _{1}\mu (B_{r})}{\mu (B_{8r})}\geq \frac{\eta _{1}}{C_{\mu }^{3}}:=\eta
_{3}.  \label{54}
\end{align}%
We distinguish two cases.

\emph{Case }$1$ when 
\begin{equation}
w(B_{r})\left( T_{\frac{3}{4}B_{R},B_{R}}(u_{-})+||f||_{L^{\infty
}(B_{R})}\right) \leq \varepsilon _{1}a=\frac{F(\eta _{1}/C_{\mu }^{3})}{%
C_{2}8^{\beta _{2}}}a.  \label{55}
\end{equation}%
In this case, we have 
\begin{align*}
w(\delta _{3}B_{\frac{8r}{\delta _{3}}})\left( T_{\frac{3}{4}B_{\frac{8r}{%
\delta _{3}}},B_{\frac{8r}{\delta _{3}}}}(u_{-})+||f||_{L^{\infty }(B_{\frac{%
8r}{\delta _{3}}})}\right) & =w(B_{8r})\left( T_{B_{\frac{6r}{\delta _{3}}%
},B_{\frac{8r}{\delta _{3}}}}(u_{-})+||f||_{L^{\infty }(B_{\frac{8r}{\delta
_{3}}})}\right) \\
& \leq w(B_{8r})\left( T_{\frac{3}{4}B_{R},B_{R}}(u_{-})+||f||_{L^{\infty
}(B_{R})}\right) \\
& \leq C_{2}8^{\beta _{2}}w(B_{r})\left( T_{\frac{3}{4}%
B_{R},B_{R}}(u_{-})+||f||_{L^{\infty }(B_{R})}\right) \\
& \leq F(\eta _{1}/C_{\mu }^{3})a=F(\eta _{3})a,
\end{align*}%
where in the first inequality we have used the fact%
\begin{equation*}
T_{B_{\frac{6r}{\delta _{3}}},B_{\frac{8r}{\delta _{3}}}}(u_{-})\leq T_{%
\frac{3}{4}B_{R},B_{R}}(u_{-})
\end{equation*}%
since $B_{\frac{6r}{\delta _{3}}}\subseteq \frac{3}{4}B_{R}$ and $u$ is
non-negative in $B_{R}$, whilst in the second inequality we have used the
fact that%
\begin{equation*}
\frac{w(B_{8r})}{w(B_{r})}=\frac{w(x_{0},8r)}{w(x_{0},r)}\leq C_{2}\left( 
\frac{8r}{r}\right) ^{\beta _{2}}=C_{2}8^{\beta _{2}}
\end{equation*}%
by virtue of (\ref{eq:vol_0}). Therefore, applying $(\mathrm{wEH}3) $ with $%
B_{R}$ being replaced by $B_{\frac{8r}{\delta _{3}}}$, we obtain 
\begin{equation}
\einf_{B_{4r}}u\geq F(\eta _{3})a.  \label{eq:vol_409}
\end{equation}%
Noting that 
\begin{equation}
\varepsilon _{1}=\frac{F(\eta _{1}/C_{\mu }^{3})}{C_{2}8^{\beta _{2}}}%
<F(\eta _{1}/C_{\mu }^{3})=F(\eta _{3}),  \label{eq:vol_415}
\end{equation}%
we see that 
\begin{eqnarray}
F(\eta _{3})a &\geq &F(\eta _{3})a-w(B_{r})\left( T_{\frac{3}{4}%
B_{R},B_{R}}(u_{-})+||f||_{L^{\infty }(B_{R})}\right)  \notag \\
&>&\varepsilon _{1}a-w(B_{r})\left( T_{\frac{3}{4}%
B_{R},B_{R}}(u_{-})+||f||_{L^{\infty }(B_{R})}\right) .  \label{eq:vol_410}
\end{eqnarray}%
Plugging (\ref{eq:vol_410}) into (\ref{eq:vol_409}), it follows that%
\begin{equation*}
\einf_{B_{4r}}u\geq F(\eta _{3})a>\varepsilon _{1}a-w(B_{r})\left( T_{\frac{3%
}{4}B_{R},B_{R}}(u_{-})+||f||_{L^{\infty }(B_{R})}\right) ,
\end{equation*}%
thus showing that (\ref{eq:vol_402}) is true in this case.

\emph{Case }$2$ when 
\begin{equation*}
w(B_{r})\left( T_{\frac{3}{4}B_{R},B_{R}}(u_{-})+||f||_{L^{\infty
}(B_{R})}\right) >\varepsilon _{1}a.
\end{equation*}%
In this case, we immediately see that 
\begin{equation*}
\einf_{B_{4r}}u\geq 0>\varepsilon _{1}a-w(B_{r})\left( T_{\frac{3}{4}%
B_{R},B_{R}}(u_{-})+||f||_{L^{\infty }(B_{R})}\right) ,
\end{equation*}%
thus showing that (\ref{eq:vol_402}) is true again.

Therefore, we always have that (\ref{eq:vol_402}) holds, no matter the \emph{%
Case }$1$ happens or not. This proves condition $(\mathrm{wEH}1)$. The proof
is complete.
\end{proof}

The following another version of the weak elliptic Harnack inequality was
essentially introduced in \cite[Lemma 4.5]{GrigoryanHuHu.2018.AM433} when $%
f\equiv 0$, and the jump kernel $J$ exists and satisfies the upper and lower
bounds.

\begin{definition}[condition (\textrm{wEH}$4$)]
We say that condition $(\mathrm{wEH}4)$ holds if there exist three universal
constants $\sigma ,\varepsilon _{4},\delta _{4}$ in $(0,1)$ such that, for
any ball $B_{R}:=B(x_{0},R)$ with $R\in (0,\sigma \overline{R})$, any
function $f\in L^{\infty }(B_{R})$, and for any $u\in \mathcal{F^{\prime }}%
\cap L^{\infty }$ which is non-negative and $f$-superharmonic in $B_{R}$, if
for some $a>0$, 
\begin{equation*}
\omega _{\delta _{4}B_{R}}(\{u\geq a\})=\frac{\mu (\delta _{4}B_{R}\cap
\left\{ u\geq a\right\} )}{\mu (\delta _{4}B_{R})}\geq \frac{1}{2}
\end{equation*}%
and 
\begin{equation*}
w(\delta _{4}B_{R})\left( T_{\frac{3}{4}B_{R},B_{R}}(u_{-})+||f||_{L^{\infty
}(B_{R})}\right) \leq \varepsilon _{4}a,
\end{equation*}%
then 
\begin{equation}
\einf_{\frac{1}{2}(\delta _{4}B_{R})}u\geq \varepsilon _{4}a.
\label{eq:vol_405}
\end{equation}
\end{definition}

\begin{proposition}
\label{prop:4.1} Let $(\mathcal{E},\mathcal{F})$ be a Dirichlet form in $%
L^{2}$, then 
\begin{equation*}
(\mathrm{wEH}3)\Rightarrow (\mathrm{wEH}4).
\end{equation*}
\end{proposition}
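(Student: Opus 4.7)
The plan is to derive $(\mathrm{wEH}4)$ from $(\mathrm{wEH}3)$ by a direct parameter specialization, taking $\eta_3 = 1/2$ in $(\mathrm{wEH}3)$ and using the same inner ball $\delta_4 B_R$. Assume $(\mathrm{wEH}3)$ holds with constants $\sigma,\delta_3\in(0,1)$ and map $F:(0,1]\to(0,1]$. I would define
\begin{equation*}
\delta_4:=\delta_3,\qquad \varepsilon_4:=\tfrac{1}{2}F(1/2),
\end{equation*}
so that $\varepsilon_4\in(0,1)$ and $\varepsilon_4<F(1/2)$, and keep the same $\sigma$.

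Next, let $B_R=B(x_0,R)$ with $R\in(0,\sigma\overline{R})$, let $f\in L^\infty(B_R)$, and let $u\in\mathcal{F}'\cap L^\infty$ be non-negative and $f$-superharmonic in $B_R$. Suppose that for some $a>0$,
\begin{equation*}
\omega_{\delta_4 B_R}(\{u\geq a\})\geq \tfrac{1}{2},\qquad w(\delta_4 B_R)\bigl(T_{\frac{3}{4}B_R,B_R}(u_-)+\|f\|_{L^\infty(B_R)}\bigr)\leq \varepsilon_4 a.
\end{equation*}
I would apply $(\mathrm{wEH}3)$ to the concentric balls $B_R$ and $B_r:=\delta_4 B_R=B(x_0,\delta_3 R)$ with $\eta_3:=1/2$. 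The three hypotheses of $(\mathrm{wEH}3)$ are verified as follows: the inner radius equals $\delta_3 R$, hence lies in $(0,\delta_3 R]$; the occupation bound $\omega_{B_r}(\{u\geq a\})\geq 1/2=\eta_3$ is the standing assumption; and by the choice of $\varepsilon_4$,
\begin{equation*}
w(B_r)\bigl(T_{\frac{3}{4}B_R,B_R}(u_-)+\|f\|_{L^\infty(B_R)}\bigr)\leq \varepsilon_4 a<F(1/2)\,a=F(\eta_3)\,a.
\end{equation*}
Therefore $(\mathrm{wEH}3)$ yields $\einf_{B_r}u\geq F(1/2)a\geq \varepsilon_4 a$, and since $\tfrac{1}{2}\delta_4 B_R\subset \delta_4 B_R=B_r$, we obtain $\einf_{\frac{1}{2}\delta_4 B_R}u\geq \einf_{B_r}u\geq \varepsilon_4 a$, establishing \eqref{eq:vol_405}.

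There is no genuine obstacle here: $(\mathrm{wEH}4)$ is a \emph{weaker} statement than $(\mathrm{wEH}3)$ once one freezes $\eta_3=1/2$, takes the inner ball in $(\mathrm{wEH}3)$ to be as large as possible (namely $\delta_3 B_R$), and notes that the essential infimum on the smaller set $\tfrac{1}{2}\delta_4 B_R$ dominates the one on $\delta_4 B_R$. The only bookkeeping point is to select $\varepsilon_4$ strictly below both $1$ and $F(1/2)$, so that the tail hypothesis of $(\mathrm{wEH}4)$ implies the tail hypothesis of $(\mathrm{wEH}3)$ while the conclusion of $(\mathrm{wEH}3)$ still gives a lower bound of at least $\varepsilon_4 a$.
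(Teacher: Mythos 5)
Your proposal is correct and is essentially the paper's own argument: the paper proves this by observing that $(\mathrm{wEH}4)$ is the special case of $(\mathrm{wEH}3)$ with $\eta_{3}=1/2$, $\delta_{4}=\delta_{3}$ and $\varepsilon_{4}=F(1/2)$. Your only deviation is the harmless (and slightly more careful) choice $\varepsilon_{4}=\tfrac{1}{2}F(1/2)$, which guarantees $\varepsilon_{4}<1$ even if $F(1/2)=1$.
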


\begin{proof}
In fact, condition $(\mathrm{wEH}4)$ is a special case of condition $(%
\mathrm{wEH}3)$ with $\eta _{3}=\frac{1}{2}$, $\varepsilon _{4}=F(1/2)$ and $%
\delta _{4}=\delta _{3}$. The proof is complete.
\end{proof}

We are now in a position to prove Theorem \ref{thm:M10}.

\begin{proof}[Proof of Theorem \protect\ref{thm:M10}]
We have the following conclusions: 
\begin{eqnarray*}
(\mathrm{wEH}) &\Leftrightarrow &(\mathrm{wEH1})\text{ \ (Proposition \ref%
{prop:4.2}),} \\
(\mathrm{wEH}) &\Rightarrow &(\mathrm{wEH2})\text{ \ (Proposition \ref%
{prop:4.3}),} \\
&\Rightarrow &(\mathrm{wEH3})\text{ \ (Proposition \ref{P1}),} \\
&\Rightarrow &(\mathrm{wEH1})\text{ \ (Proposition \ref{prop:4.4}),}
\end{eqnarray*}%
thus showing that the equivalences in (\ref{56}) are all true.

Finally, the implication $(\mathrm{wEH}3)\Rightarrow (\mathrm{wEH}4)$ in (%
\ref{57}) follows immediately by Proposition \ref{prop:4.1}. The proof of
Theorem \ref{thm:M10} is complete.
\end{proof}

\section{Consequences of weak Harnack inequality}

In this section, we look at two consequences of the weak Harnack inequality.
One is that we obtain the H\"{o}lder continuity of any harmonic function if
conditions $(\mathrm{wEH})$ and $(\mathrm{TJ})$ hold for any regular
Dirichlet form without killing part, see Lemma \ref{T2} below. The H\"{o}%
lder continuity of harmonic functions was investigated in various settings,
see for example \cite[Theorem 5.3]{Sivestre.2006.IUMJ1155} for a certain
class of integro-differential equations in $%
\mathbb{R}
^{n}$ (see also \cite[Theorem 1.7]{Dyda.2020.AP317} in $%
\mathbb{R}
^{n}$ under a weaker assumption), and \cite[Theorem 2.1]%
{ChenKumagaiWang.2016.EH} for a pure-jump Dirichlet form. Here we have
extended this conclusion to a more general situation where the jump kernel
does not necessarily exist. Although the proof is standard, we sketch the
proof for completeness of this paper.

The other consequence of the weak Harnack inequality is that we can obtain a
Lemma of growth for any globally non-negative, superharmonic function (Lemma %
\ref{L3} below), which leads to a lower bound of the mean exit time on a
ball (Lemma \ref{L80} below). The lower bound of the mean exit time plays an
important role in obtaining the heat kernel estimate.

Recall that for an open subset $\Omega $ of $M$, a function $u\in \mathcal{F}
$ is \emph{harmonic} in $\Omega $ if for any non-negative $\varphi \in 
\mathcal{F}(\Omega )$, 
\begin{equation*}
\mathcal{E}(u,\varphi )=0.
\end{equation*}%
For any ball $B\subseteq M$ and any function $u\in L^{\infty }(B,\mu )$, we
define 
\begin{equation*}
\eosc_{B}u:=\esup_{B}u-\einf_{B}u.
\end{equation*}

\begin{lemma}
\label{lemma:EHR}Let $(\mathcal{E},\mathcal{F})$ be a regular Dirichlet form
in $L^{2}(M,\mu )$ without killing part. If conditions $(\mathrm{wEH})$ and $%
(\mathrm{TJ})$ hold, then there exist two constants $\beta \in (0,1)$ and $%
C>0$ such that, for any $x_{0}\in M$, $0<r<\sigma \overline{R}$ and any
harmonic function $u$ in $B(x_{0},r)$, 
\begin{equation}
\eosc_{B(x_{0},\rho )}u\leq C||u||_{L^{\infty }}\left( \frac{\rho }{r}%
\right) ^{\beta },\ \ 0<\rho \leq r.  \label{600}
\end{equation}%
We remark that constants $C,\beta $ are independent of $\overline{R}%
,u,x_{0},r,\rho $.
\end{lemma}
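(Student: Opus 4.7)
The plan is a standard oscillation-decay iteration on dyadically shrinking balls, adapted to the nonlocal setting by a Silvestre-type sliding envelope together with condition $(\mathrm{TJ})$ to control the nonlocal tail. Set $r_k := \delta^k r$ and $B_k := B(x_0, r_k)$, where $\delta \in (0, 4\delta_1)$ will be chosen small at the end, with $\delta_1$ the constant from $(\mathrm{wEH}1)$. Following Silvestre, I would construct by induction on $k$ two monotone sequences $\tilde m_k \uparrow$ and $\tilde M_k \downarrow$ with $\tilde M_k - \tilde m_k = L\, \theta^k$ (where $L := 2\|u\|_{L^\infty}$ and $\theta := 1 - \varepsilon_1(1/2)/4$), such that $\tilde m_k \leq u \leq \tilde M_k$ on $B_k$; the base case is $\tilde M_0 := \|u\|_{L^\infty}$, $\tilde m_0 := -\|u\|_{L^\infty}$. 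Granted the envelope, $\eosc_{B_k} u \leq L\theta^k$ for all $k \geq 0$, and (\ref{600}) follows by matching $\rho$ to the scale $\delta^k r$ with $\beta := \log(1/\theta)/\log(1/\delta)$.

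For the inductive step at stage $k$, denote $\mu_{k-1} := (\tilde M_{k-1} + \tilde m_{k-1})/2$ and consider the dichotomy on $B(x_0, r_k/4)$: one of $\omega_{B(x_0, r_k/4)}\{u \geq \mu_{k-1}\}$, $\omega_{B(x_0, r_k/4)}\{u < \mu_{k-1}\}$ is $\geq 1/2$. Replacing $u$ by $-u$ if necessary (harmonicity is preserved and $\|u\|_{L^\infty}$ is unchanged), I would assume the former and apply $(\mathrm{wEH}1)$ from Theorem \ref{thm:M10} to the nonnegative harmonic function $v := u - \tilde m_{k-1}$ on $B_R := B_{k-1}$ with sub-ball $B(x_0, r_k/4)$, $\eta_1 := 1/2$, $f \equiv 0$, and threshold $a := L\theta^{k-1}/2$. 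The conclusion reads
\begin{equation*}
\einf_{B_k} u \;\geq\; \tilde m_{k-1} + \varepsilon_1(1/2)\, L\theta^{k-1}/2 \;-\; w(B(x_0, r_k/4))\, T, \qquad T := T_{(3/4)B_{k-1},\, B_{k-1}}(v_-).
\end{equation*}
Setting $\tilde M_k := \tilde M_{k-1}$ and $\tilde m_k := \tilde M_{k-1} - L\theta^k$, the envelope is preserved precisely when $w(B(x_0, r_k/4))\, T \leq (\varepsilon_1(1/2)/4)\, L\theta^{k-1}$.

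The heart of the argument is the tail estimate. I would decompose $B_{k-1}^c$ into the annuli $A_i := B_{i-1} \setminus B_i$ for $1 \leq i \leq k-1$ and the outer piece $A_k := M \setminus B_0$. On $A_i$ with $i \leq k-1$, the inductive envelope combined with monotonicity of $\{\tilde m_j\}$ gives $v_-(y) \leq \tilde m_{k-1} - \tilde m_{i-1} \leq L\theta^{i-1}$, while on $A_k$ we only use the crude $v_- \leq 2\|u\|_{L^\infty} = L$. For $x \in (3/4) B_{k-1}$ and $y \in A_i$, the triangle inequality yields $d(x,y) \geq r_i/4$, so $(\mathrm{TJ})$ together with (\ref{eq:vol_0}) gives $J(x, A_i) \leq C/w(B_i)$; combined with $w(B(x_0, r_k/4))/w(B_i) \leq C'\, \delta^{(k-i)\beta_1}$, the tail becomes a geometric series that sums to $C''\, L\, \theta^{k-1}\, \alpha/(\theta - \alpha)$ with $\alpha := \delta^{\beta_1}$, provided $\alpha < \theta$. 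The closing condition $\alpha/(\theta - \alpha) \leq \varepsilon_1(1/2)/(4 C'')$ is a single inequality on $\delta$, satisfied for $\delta$ sufficiently small, independently of $k$, $u$, and $x_0$.

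The main obstacle is precisely this tail control: a naive bound $v_- \leq L$ on every annulus would produce an error not decaying in $k$, and the induction would not close. The envelope upgrade $v_- \leq L\theta^{i-1}$ on $A_i$, inherited from the previous inductive steps, is what turns the tail into a geometric series and allows us to pick a single universal $\delta$ — and hence a universal Hölder exponent $\beta$ — working for all harmonic functions on all balls simultaneously.
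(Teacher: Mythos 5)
Your proposal is, in substance, the paper's own argument: an oscillation-decay induction with envelopes $\tilde m_k\leq u\leq \tilde M_k$ on shrinking balls, a measure dichotomy at each scale, an application of the weak Harnack inequality to the shifted function, and — crucially — the annular tail estimate in which the inductive envelope upgrades the crude bound $v_-\leq L$ on $B_{i-1}\setminus B_i$ to $v_-\leq \tilde m_{k-1}-\tilde m_{i-1}\leq L\theta^{i-1}$, so that $(\mathrm{TJ})$ together with (\ref{eq:vol_0}) turns the tail into a convergent geometric series. That last point is exactly the heart of the paper's proof (there phrased as the growth bound $v(y)\leq 2\bigl(d(y,x_0)/(r\theta^{-k})\bigr)^{\beta}-1$ outside the current ball). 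The only genuine difference is the dual choice of parameters: the paper fixes the radius ratio (namely $\theta^{-1}\leq\delta$) and shrinks the H\"older exponent $\beta$ until the series $\sum_j\theta^{-j\beta_1}(\theta^{(j+1)\beta}-1)$ is small, whereas you fix the per-step oscillation contraction $\theta=1-\varepsilon_1(1/2)/4$ and shrink the radius ratio $\delta$ until $\delta^{\beta_1}/(\theta-\delta^{\beta_1})$ is small. Both are legitimate and yield the same kind of (small, explicit) exponent.

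One point needs repair. You invoke $(\mathrm{wEH}1)$ via Theorem \ref{thm:M10}, but that equivalence is proved under condition $(\mathrm{VD})$ (the passage from $(\mathrm{wEH})$ to $(\mathrm{wEH}1)$ in Proposition \ref{prop:4.2} uses $\mu(B_{4r})\leq C_\mu^2\mu(B_r)$), and $(\mathrm{VD})$ is not among the hypotheses of Lemma \ref{lemma:EHR}. As written, your argument therefore proves the lemma only under the additional assumption $(\mathrm{VD})$. The fix is what the paper does: run the measure dichotomy on the \emph{same} ball $B_k$ on which you want the conclusion, and apply $(\mathrm{wEH})$ directly to $v=u-\tilde m_{k-1}$ on the pair $(B_{k-1},B_k)$ (which only needs $\delta$ below the constant of $(\mathrm{wEH})$); the dichotomy then gives $\bigl(\fint_{B_k}v^p\,d\mu\bigr)^{1/p}\geq 2^{-1/p}a$ with no volume comparison between different balls, and the induction closes with $\lambda:=(2^{1+1/p}C_H)^{-1}$ in place of $\varepsilon_1(1/2)$. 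With that substitution your proof is complete and matches the paper's.
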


\begin{proof}
Fix a ball $B\left( x_{0},r\right) $ for $0<r<\sigma \overline{R}$. Set%
\begin{equation*}
B_{\rho }:=B\left( x_{0},\rho \right) \text{ \ for any }\rho >0.
\end{equation*}%
Let $u$ be a harmonic function in $B_{r}$. Without loss of generality, we
assume that $||u||_{L^{\infty }(M)}<\infty $. Let%
\begin{equation*}
M_{0}:=||u||_{L^{\infty }}\text{, \ \ }m_{0}:=\einf_{M}u\text{, \ }%
K:=M_{0}-m_{0}
\end{equation*}%
so that $0\leq K\leq 2\Vert u\Vert _{L^{\infty }}$.

We will construct two sequences $\{m_{n}\}_{n\geq 0}$, $\{M_{n}\}_{n\geq 0}$
of positive numbers such that for each $n$,%
\begin{eqnarray}
m_{n-1} &\leq &m_{n}\leq M_{n}\leq M_{n-1}\text{ \ \ and \ \ }%
M_{n}-m_{n}=K\theta ^{-n\beta },  \notag \\
m_{n} &\leq &u(x)\leq M_{n}\quad \text{ for any }x\in B_{r\theta ^{-n}},
\label{601}
\end{eqnarray}%
where $\theta ,\beta $ are two constants to be determined so that 
\begin{equation}
\theta \geq \delta ^{-1}\text{, \ \ }\beta \in (0,1)\text{, \ \ and \ }\frac{%
2-\lambda }{2}\theta ^{\beta }\leq 1,  \label{602}
\end{equation}%
where $\lambda :=(2^{1+1/p}C_{H})^{-1}\in (0,1)$ and $p,\delta \in (0,1)$
and $C_{H}\geq 1$ come from condition $(\mathrm{wEH})$. Once this is true,
then we are done by noting that (\ref{600}) follows, since for any $0<\rho
<r $, there is some integer $j\geq 0$ such that 
\begin{equation*}
\theta ^{-j-1}\leq \frac{\rho }{r}<\theta ^{-j},
\end{equation*}%
from which, we see by (\ref{601}) that 
\begin{equation*}
\eosc_{B_{\rho }}u\leq \eosc_{B_{r\theta ^{-j}}}u\leq M_{j}-m_{j}=K\theta
^{-j\beta }\leq 2\theta ^{\beta }\Vert u\Vert _{L^{\infty }}\left( \frac{%
\rho }{r}\right) ^{\beta }.
\end{equation*}

We will show (\ref{601}) inductively. Indeed, assume that there exists an
integer $k\geq 1$ such that (\ref{601}) holds for any $n\leq k-1$. We need
to construct $m_{k},M_{k}$ such that (\ref{601}) still holds for $n=k$ and
for $\theta ,\beta $ satisfying (\ref{602}).

To do this, set for any $x\in M$ 
\begin{equation}
v(x)=\left( u(x)-\frac{M_{k-1}+m_{k-1}}{2}\right) \frac{2\theta ^{(k-1)\beta
}}{K}.  \label{vv1}
\end{equation}%
Clearly, we have by (\ref{601}) for $n=k-1$ that%
\begin{equation}
|v(x)|\leq \frac{M_{k-1}-m_{k-1}}{2}\frac{2\theta ^{(k-1)\beta }}{K}=\frac{%
K\theta ^{-(k-1)\beta }}{2}\frac{2\theta ^{(k-1)\beta }}{K}=1  \label{vv}
\end{equation}%
for almost all $x\in B_{r\theta ^{-(k-1)}}$.

Note that for any point $y\in B(x_{0},r\theta ^{-(k-1)})^{c}$, there is some
integer $j\geq 1$ such that 
\begin{equation*}
r\theta ^{-k+j}\leq d\left( y,x_{0}\right) <r\theta ^{-k+j+1}.
\end{equation*}%
For simplicity, set $M_{-n}=M_{0}$ and $m_{-n}=m_{0}$ for any $n\geq 1$. By (%
\ref{601}), for any $y\in B(x_{0},r\theta ^{-(k-j-1)})\setminus
B(x_{0},r\theta ^{-(k-j)})$ ($j\geq 1$), 
\begin{align*}
\frac{K}{2\theta ^{(k-1)\beta }}v(y)& =u(y)-\frac{M_{k-1}+m_{k-1}}{2}\leq
M_{k-j-1}-\frac{M_{k-1}+m_{k-1}}{2} \\
& =M_{k-j-1}-m_{k-j-1}+m_{k-j-1}-\frac{M_{k-1}+m_{k-1}}{2} \\
& \leq M_{k-j-1}-m_{k-j-1}-\frac{M_{k-1}-m_{k-1}}{2} \\
& \leq K\theta ^{-(k-j-1)\beta }-\frac{K}{2}\theta ^{-(k-1)\beta },
\end{align*}%
from which, it follows that 
\begin{equation}
v(y)\leq 2\theta ^{j\beta }-1\leq 2\left( \frac{d\left( y,x_{0}\right) }{%
r\theta ^{-k}}\right) ^{\beta }-1\text{ \ for any }y\in B(x_{0},r\theta
^{-(k-1)})^{c}.  \label{603}
\end{equation}%
On the other hand, we similarly have that, for any $y\in B(x_{0},r\theta
^{-(k-j-1)})\setminus B(x_{0},r\theta ^{-(k-j)})$ ($j\geq 1$), 
\begin{align*}
\frac{K}{2\theta ^{(k-1)\beta }}v(y)& =u(y)-\frac{M_{k-1}+m_{k-1}}{2}\geq
m_{k-j-1}-\frac{M_{k-1}+m_{k-1}}{2} \\
& =m_{k-j-1}-M_{k-j-1}+M_{k-j-1}-\frac{M_{k-1}+m_{k-1}}{2} \\
& \geq -\left( M_{k-j-1}-m_{k-j-1}\right) +\frac{M_{k-1}-m_{k-1}}{2} \\
& \geq -K\theta ^{-(k-j-1)\beta }+\frac{K}{2}\theta ^{-(k-1)\beta },
\end{align*}%
which gives that 
\begin{equation}
v(y)\geq 1-2\theta ^{j\beta }\geq 1-2\left( \frac{d\left( y,x_{0}\right) }{%
r\theta ^{-k}}\right) ^{\beta }\text{ \ for any }y\in B(x_{0},r\theta
^{-(k-1)})^{c}.  \label{603-1}
\end{equation}

We distinguish two cases: either%
\begin{equation}
\mu (B_{r\theta ^{-k}}\cap \left\{ v\leq 0\right\} )\geq \mu (B_{r\theta
^{-k}})/2,  \label{603-2}
\end{equation}%
or%
\begin{equation}
\mu (B_{r\theta ^{-k}}\cap \left\{ v>0\right\} )\geq \mu (B_{r\theta
^{-k}})/2.  \label{603-3}
\end{equation}%
If (\ref{603-2}) holds, we will show that for almost every $z\in B_{r\theta
^{-k}}$ 
\begin{equation}
v(z)\leq 1-\lambda .  \label{603-4}
\end{equation}%
Temporally assume that (\ref{603-4}) holds true. Then by (\ref{vv1}), we see
that for any point $z\in B_{r\theta ^{-k}}$, 
\begin{align*}
u(z)& =\frac{K}{2\theta ^{(k-1)\beta }}v(z)+\frac{M_{k-1}+m_{k-1}}{2}\leq 
\frac{K(1-\lambda )}{2\theta ^{(k-1)\beta }}+\frac{M_{k-1}+m_{k-1}}{2} \\
& =\frac{K(1-\lambda )}{2}\theta ^{-(k-1)\beta }+\frac{M_{k-1}-m_{k-1}}{2}%
+m_{k-1} \\
& =\frac{K(1-\lambda )}{2}\theta ^{-(k-1)\beta }+\frac{K}{2}\theta
^{-(k-1)\beta }+m_{k-1}\text{ \ (using (\ref{601}))} \\
& =\frac{(2-\lambda )\theta ^{\beta }}{2}K\theta ^{-k\beta }+m_{k-1}\leq
K\theta ^{-k\beta }+m_{k-1},
\end{align*}%
where in the last inequality we have used the fact that $\frac{2-\lambda }{2}%
\theta ^{\beta }\leq 1$ in (\ref{602}). Therefore, setting%
\begin{equation*}
m_{k}=m_{k-1}\text{ \ \ and \ \ }M_{k}=m_{k}+K\theta ^{-k\beta }\leq M_{k-1},
\end{equation*}%
we obtain that $m_{k}\leq u(z)\leq M_{k}$ for a.e $z\in B_{r\theta ^{-k}}$,
thus showing that (\ref{601}) holds when $n=k$, which finishes the induction
step from $n\leq k-1$ to $n=k$ \ in the case when (\ref{603-2}) holds.

We turn to show (\ref{603-4}). Indeed, consider $h:=1-v$. Clearly, the
function $h$ is harmonic in $B_{r\theta ^{-(k-1)}}$ and also is non-negative
in $B_{r\theta ^{-(k-1)}}$ by using (\ref{vv}). Applying $(\mathrm{wEH})$ to
the function $h$ in $B_{r\theta ^{-(k-1)}}$ and $f=0$, we find that 
\begin{equation}
\left( \fint_{B_{r\theta ^{-k}}}h^{p}du\right) ^{1/p}\leq C_{H}\left( \einf%
_{B_{r\theta ^{-k}}}h+w\left( x_{0},r\theta ^{-k}\right) T_{\frac{3}{4}%
B_{r\theta ^{-(k-1)}},B_{r\theta ^{-(k-1)}}}(h_{-})\right) ,  \label{604}
\end{equation}%
where we have used the fact that $\theta ^{-1}\leq \delta $ so that $r\theta
^{-k}\leq \delta \cdot r\theta ^{-(k-1)}$. Note that by (\ref{603-2}), 
\begin{eqnarray}
\left( \fint_{B_{r\theta ^{-k}}}h^{p}du\right) ^{1/p} &\geq &\left( \frac{1}{%
\mu \left( B_{r\theta ^{-k}}\right) }\int_{B_{r\theta ^{-k}}\cap \left\{
v\leq 0\right\} }(1-v)^{p}du\right) ^{1/p}  \notag \\
&\geq &\left( \frac{\mu (B_{r\theta ^{-k}}\cap \left\{ v\leq 0\right\} )}{%
\mu \left( B_{r\theta ^{-k}}\right) }\right) ^{1/p}\geq 2^{-1/p}.
\label{605}
\end{eqnarray}%
Also note that by (\ref{603}), 
\begin{equation*}
h_{-}(y)=(1-v(y))_{-}=(v(y)-1)_{+}\leq 2\left[ \left( \frac{d\left(
y,x_{0}\right) }{r\theta ^{-k}}\right) ^{\beta }-1\right]
\end{equation*}%
for any $y\in B(x_{0},r\theta ^{-(k-1)})^{c}=B_{r\theta ^{-(k-1)}}^{c}$.
From this, we have by condition $(\mathrm{TJ})$ 
\begin{align}
T_{\frac{3}{4}B_{r\theta ^{-(k-1)}},B_{r\theta ^{-(k-1)}}}(h_{-})&
=\sup_{x\in \frac{3}{4}B_{r\theta ^{-(k-1)}}}\int_{B_{r\theta
^{-(k-1)}}^{c}}h_{-}(y)J(x,dy)  \notag \\
& \leq 2\sup_{x\in \frac{3}{4}B_{r\theta ^{-(k-1)}}}\int_{B_{r\theta
^{-(k-1)}}^{c}}\left[ \left( \frac{d\left( y,x_{0}\right) }{r\theta ^{-k}}%
\right) ^{\beta }-1\right] J(x,dy)  \notag \\
& =2\sup_{x\in \frac{3}{4}B_{r\theta ^{-(k-1)}}}\sum_{j=1}^{\infty
}\int_{B_{r\theta ^{-k+j+1}}\backslash B_{r\theta ^{-k+j}}}\left[ \left( 
\frac{d\left( y,x_{0}\right) }{r\theta ^{-k}}\right) ^{\beta }-1\right]
J(x,dy)  \notag \\
& \leq 2\sum_{j=1}^{\infty }\sup_{x\in \frac{3}{4}B_{r\theta
^{-(k-1)}}}\int_{B_{r\theta ^{-k+j+1}}\backslash B_{r\theta ^{-k+j}}}\left(
\theta ^{(j+1)\beta }-1\right) J(x,dy)  \notag \\
& \leq 2\sum_{j=1}^{\infty }\left( \theta ^{(j+1)\beta }-1\right) \sup_{x\in 
\frac{3}{4}B_{r\theta ^{-(k-1)}}}\int_{B_{r\theta ^{-k+j}}^{c}}J(x,dy) 
\notag \\
& \leq 2\sum_{j=1}^{\infty }\left( \theta ^{(j+1)\beta }-1\right) \sup_{x\in 
\frac{3}{4}B_{r\theta ^{-(k-1)}}}\int_{B(x,r\theta ^{-k+j}/4)^{c}}J(x,dy) 
\notag \\
& \leq 2C\sum_{j=1}^{\infty }\left( \theta ^{(j+1)\beta }-1\right)
\sup_{x\in \frac{3}{4}B_{r\theta ^{-(k-1)}}}\frac{1}{w(x,r\theta ^{-k+j}/4)},
\label{606}
\end{align}%
where $C>0$ is the same constant as in (\ref{eq:vol_16}). Since 
\begin{equation*}
\frac{w(x,r\theta ^{-k+j}/4)}{w(x_{0},r\theta ^{-k})}\geq \frac{w(x,r\theta
^{-k+j})}{C_{2}4^{\beta _{2}}w(x_{0},r\theta ^{-k})}\geq \frac{C_{1}\theta
^{j\beta _{1}}}{C_{2}4^{\beta _{2}}}
\end{equation*}%
for any $x\in \frac{3}{4}B_{r\theta ^{-(k-1)}}$ by using (\ref{eq:vol_0}),
it follows from (\ref{606}) that 
\begin{equation}
w\left( x_{0},r\theta ^{-k}\right) T_{\frac{3}{4}B_{r\theta
^{-(k-1)}},B_{r\theta ^{-(k-1)}}}(h_{-})\leq \frac{2CC_{2}4^{\beta _{2}}}{%
C_{1}}\sum_{j=1}^{\infty }\frac{\theta ^{(j+1)\beta }-1}{\theta ^{j\beta
_{1}}}.  \label{607}
\end{equation}

Therefore, substituting (\ref{605}), (\ref{607}) into (\ref{604}), we obtain 
\begin{align}
\einf_{B_{r\theta ^{-k}}}h& \geq \left( C_{H}2^{1/p}\right) ^{-1}-w\left(
x_{0},r\theta ^{-k}\right) T_{\frac{3}{4}B_{r\theta ^{-(k-1)}},B_{r\theta
^{-(k-1)}}}(h_{-})  \notag \\
& \geq \left( C_{H}2^{1/p}\right) ^{-1}-\frac{2CC_{2}4^{\beta _{2}}}{C_{1}}%
\sum_{j=1}^{\infty }\frac{\theta ^{(j+1)\beta }-1}{\theta ^{j\beta _{1}}}.
\label{51-1}
\end{align}%
Since $\theta ^{-1}\leq \delta $, we see that for any $\beta \in \left(
0,\beta _{1}/2\right) $ 
\begin{equation*}
\sum_{j=l+1}^{\infty }\theta ^{-j\beta _{1}}\left( \theta ^{(j+1)\beta
}-1\right) \leq \sum_{j=l+1}^{\infty }\theta ^{-j\beta _{1}}\theta
^{(j+1)\beta _{1}/2}=\frac{\theta ^{-l\beta _{1}/2}}{1-\theta ^{-\beta
_{1}/2}}\leq \frac{\delta ^{l\beta _{1}/2}}{1-\delta ^{\beta _{1}/2}}\leq
\left( \frac{8CC_{2}4^{\beta _{2}}}{C_{1}}C_{H}2^{1/p}\right) ^{-1},
\end{equation*}%
provided that the number $l$ is sufficiently large, which depends only on $%
\delta $ but is independent of $\beta ,\theta $. For such a number $l$, we
now choose $\beta \in \left( 0,\beta _{1}/2\right) $ to be so small that 
\begin{eqnarray*}
\sum_{j=1}^{l}\theta ^{-j\beta _{1}}\left( \theta ^{(j+1)\beta }-1\right) 
&\leq &\theta ^{-\beta _{1}}\sum_{j=1}^{l}\left( \theta ^{(j+1)\beta
}-1\right) \leq l\theta ^{-\beta _{1}}\left( \theta ^{(l+1)\beta }-1\right) 
\\
&\leq &l\delta ^{\beta _{1}}\left( \theta ^{(l+1)\beta }-1\right) \leq
\left( \frac{8CC_{2}4^{\beta _{2}}}{C_{1}}C_{H}2^{1/p}\right) ^{-1}.
\end{eqnarray*}%
It follows that%
\begin{equation*}
\sum_{j=1}^{\infty }\frac{\theta ^{(j+1)\beta }-1}{\theta ^{j\beta _{1}}}%
=\sum_{j=1}^{l}\theta ^{-j\beta _{1}}\left( \theta ^{(j+1)\beta }-1\right)
+\sum_{j=l+1}^{\infty }\theta ^{-j\beta _{1}}\left( \theta ^{(j+1)\beta
}-1\right) \leq 2\left( \frac{8CC_{2}4^{\beta _{2}}}{C_{1}}%
C_{H}2^{1/p}\right) ^{-1},
\end{equation*}%
from which, we see by (\ref{51-1}) that 
\begin{equation*}
\einf_{B_{r\theta ^{-k}}}h\geq \left( C_{H}2^{1/p}\right) ^{-1}-\frac{%
2CC_{2}4^{\beta _{2}}}{C_{1}}\cdot 2\left( \frac{8CC_{2}4^{\beta _{2}}}{C_{1}%
}C_{H}2^{1/p}\right) ^{-1}=\left( 2C_{H}2^{1/p}\right) ^{-1}=\lambda .
\end{equation*}%
Therefore, $v\leq 1-\lambda $ in $B_{r\theta ^{-k}}$, thus showing (\ref%
{603-4}) when (\ref{603-2}) is satisfied, as desired.

It remains to consider the case when (\ref{603-3}) is satisfied. We need to
show 
\begin{equation}
v\geq -1+\lambda \text{ \ in }B_{r\theta ^{-k}}\text{.}  \label{51-2}
\end{equation}%
Indeed, consider the function $h=1+v$. Similar to the argument above,
setting $M_{k}=M_{k-1}$ and $m_{k}=M_{k}-K\theta ^{-k\beta }$, one can
obtain (\ref{51-2}). The proof is complete.
\end{proof}

From the above, we immediately get the H\"{o}lder continuity of harmonic
functions.

\begin{lemma}
\label{T2}Let $(\mathcal{E},\mathcal{F})$ be a regular Dirichlet form in $%
L^{2}$ without killing part. If conditions $(\mathrm{wEH})$ and $(\mathrm{TJ}%
)$ hold, then there exist three constants $C>0,\theta \in (0,1]$ and $%
\varepsilon \in (0,1)$ such that, for any ball $B\left( x_{0},r\right) $
with $r<\sigma \overline{R}$ and for any globally bounded function $u$,
which is harmonic in $B\left( x_{0},r\right) $, 
\begin{equation}
|u(x)-u(y)|\leq C\left( \frac{d(x,y)}{r}\right) ^{\theta }\Vert u\Vert
_{L^{\infty }}.  \label{608}
\end{equation}%
for almost every points $x,y\in B\left( x_{0},\varepsilon r\right) $.
\end{lemma}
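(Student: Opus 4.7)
The plan is to deduce the pointwise Hölder estimate (\ref{608}) directly from the oscillation estimate (\ref{600}) in Lemma \ref{lemma:EHR} by recentering: instead of applying that oscillation bound to balls concentric with $B(x_0,r)$, apply it to balls centered at each point $x\in B(x_0,\varepsilon r)$, using that $u$ remains harmonic on a ball of comparable radius around $x$. I would fix the constant $\varepsilon := \tfrac{1}{2}$ (any $\varepsilon\in(0,1)$ works with minor modifications).

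First I would verify the geometric containment: for every $x\in B(x_0,\tfrac{1}{2}r)$ the triangle inequality gives $B(x,\tfrac{1}{2}r)\subset B(x_0,r)$, so $u$ is harmonic on $B(x,\tfrac{1}{2}r)$. Since $\tfrac{1}{2}r<\sigma\overline{R}$, Lemma \ref{lemma:EHR} applies to this ball, yielding constants $\beta\in(0,1)$ and $C>0$ (independent of $x,r,u$) such that
\begin{equation*}
\eosc_{B(x,\rho)}u\leq C\|u\|_{L^\infty}\left(\frac{2\rho}{r}\right)^\beta\quad\text{for all }0<\rho\leq \tfrac{1}{2}r.
\end{equation*}

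Next, I would split into two cases for a.e.\ pair $x,y\in B(x_0,\tfrac{1}{2}r)$. If $d(x,y)<\tfrac{1}{2}r$, apply the above with $\rho=d(x,y)+\eta$ for arbitrary $\eta>0$ small. For a.e.\ $x$ in the ball of full measure where $u(x)$ lies between $\einf$ and $\esup$, and likewise for a.e.\ $y\in B(x,\rho)$, the trivial inequality $|u(x)-u(y)|\leq \eosc_{B(x,\rho)}u$ holds, hence
\begin{equation*}
|u(x)-u(y)|\leq C\|u\|_{L^\infty}\left(\frac{2(d(x,y)+\eta)}{r}\right)^\beta;
\end{equation*}
letting $\eta\to0^+$ yields $|u(x)-u(y)|\leq 2^\beta C\|u\|_{L^\infty}(d(x,y)/r)^\beta$. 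In the remaining case $d(x,y)\geq \tfrac{1}{2}r$, we have $(d(x,y)/r)^\beta\geq 2^{-\beta}$ and the crude bound $|u(x)-u(y)|\leq 2\|u\|_{L^\infty}\leq 2^{\beta+1}\|u\|_{L^\infty}(d(x,y)/r)^\beta$ suffices. Combining both cases proves (\ref{608}) with $\theta=\beta$, $\varepsilon=\tfrac{1}{2}$, and a new $C=\max\{2^\beta C,2^{\beta+1}\}$.

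The only mild subtlety — and what I would guard against — is the measure-theoretic bookkeeping: Lemma \ref{lemma:EHR} controls $\eosc$, not pointwise oscillation, so the Hölder bound only holds on the intersection of full-measure sets where the essential sup/inf of $u$ on each relevant ball is realized. Taking a countable exceptional set (e.g.\ dyadic radii) one standardly extracts a single null set off which the estimate is valid for all pairs simultaneously. Otherwise the argument is a routine recentering of Lemma \ref{lemma:EHR}, so no new analytic input (and in particular no further use of conditions $(\mathrm{wEH})$ or $(\mathrm{TJ})$) is required.
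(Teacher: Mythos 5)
Your proof is correct and follows essentially the same route as the paper: both recenter the oscillation estimate of Lemma \ref{lemma:EHR} at a point $x$ in a smaller concentric ball, using that $u$ is harmonic on a ball of comparable radius around $x$, and then take $\rho\asymp d(x,y)$ (the paper uses $\varepsilon=1/4$ with $\rho=\tfrac{3}{2}d(x,y)$ and thereby avoids your case split and limiting argument, but this is only a cosmetic difference). Your explicit handling of the essential-oscillation versus pointwise-value issue is a welcome extra care that the paper leaves implicit.
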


\begin{proof}
Let the function $u\in L^{\infty }$ be harmonic in $B\left( x_{0},r\right) $
with $r<\sigma \overline{R}$. By Lemma \ref{lemma:EHR}, 
\begin{equation}
\eosc_{B(x_{0},\rho )}u\leq C||u||_{L^{\infty }}\left( \frac{\rho }{r}%
\right) ^{\beta },\ \ 0<\rho \leq r.  \label{609}
\end{equation}%
We show that (\ref{608}) holds for $\theta =\beta ,\varepsilon =1/4$.

Indeed, let $x$ be any point $x$ in $B(x_{0},r/4)$, the function $u$ is
harmonic in $B(x,\frac{3}{4}r)\subseteq B(x_0,r)$. Let $y$ be a point in $%
B(x_{0},r/4)$. Applying (\ref{609}) with $x_{0}$ replaced by $x$, $r$ by $%
\frac{3}{4}r$ and with $\rho =\frac{3}{2}d(x,y)$, we obtain 
\begin{equation*}
|u(x)-u(y)|\leq \eosc_{B(x,\frac{3}{2}d(x,y))}u\leq C||u||_{L^{\infty
}}\left( \frac{3d(x,y)/2}{3r/4}\right) ^{\beta }=C2^{\beta }\left( \frac{%
d(x,y)}{r}\right) ^{\beta }||u||_{L^{\infty }},
\end{equation*}%
thus showing (\ref{608}). The proof is complete.
\end{proof}

Another consequence of the weak elliptic Harnack inequality is that it
implies a lower bound of the mean exit time on a ball, as we will see below.

Recall that the operator $\mathcal{L}^{\Omega }$ is the generator of the
Dirichlet form $(\mathcal{E},\mathcal{F}(\Omega ))$ for any non-empty open
subset $\Omega $ of $M$. For a ball $B\subset M$, let the function $E^{B}$
be a weak solution of the Poisson-type equation $-\mathcal{L}^{B}u=1$ in $B$%
, that is%
\begin{equation}
\mathcal{E}(E^{B},\varphi )=(1,\varphi )\text{ \ for any }0\leq \varphi \in 
\mathcal{F}(B).  \label{81}
\end{equation}%
We say that \emph{condition }$(\mathrm{E}_{\geq })$ holds if there exist
three constants $C>0$ and $\sigma ,\delta $ in $(0,1)$ such that, for all
balls $B\subset M$ with radius less than $\sigma \overline{R}$, 
\begin{equation}
\einf_{x\in \delta B}E^{B}(x)\geq Cw(B).  \label{E>}
\end{equation}%
We say that \emph{condition }$(\mathrm{E}_{\leq })$ holds if there exist two
constants $C>0$ and $\sigma $ in $(0,1)$ such that, for all balls $B\subset
M $ with radius less than $\sigma \overline{R}$, 
\begin{equation}
||E^{B}||_{L^{\infty }}\leq Cw(B).  \label{eq:vol_29}
\end{equation}

\begin{lemma}
\label{L80}Let $(\mathcal{E},\mathcal{F})$ be a regular Dirichlet form in $%
L^{2}$. Then%
\begin{equation}
(\mathrm{VD})+(\mathrm{Cap}_{\leq })+(\mathrm{FK})+(\mathrm{wEH})\Rightarrow
(\mathrm{E}_{\geq })+(\mathrm{E}_{\leq }).  \label{82}
\end{equation}
\end{lemma}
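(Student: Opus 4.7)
\textbf{Proof plan for Lemma \ref{L80}.} The conclusion splits into $(\mathrm{E}_{\leq})$ and $(\mathrm{E}_{\geq})$, and I would treat the two separately. Throughout, let $B=B(x_0,R)$ with $R$ less than $\sigma\overline R$.

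\textbf{Step 1: $(\mathrm{E}_{\leq})$ from $(\mathrm{VD})+(\mathrm{FK})$ alone.} The key is to bypass Lemma \ref{thm:M3} (which requires $(\mathrm{RVD})$) by deriving directly a \emph{homogeneous} Nash-type inequality. Given $u\in\mathcal F(B)$, set $D=\{u\neq0\}\subset B$; Cauchy--Schwarz gives $\mu(D)\ge\|u\|_1^2/\|u\|_2^2$, and inserting this into $(\mathrm{FK})$ eliminates $\mu(D)$ and yields
\[
\|u\|_2^{2(1+\nu)}\ \le\ C\,\frac{w(B)}{\mu(B)^\nu}\,\|u\|_1^{2\nu}\,\mathcal E(u,u).
\]
This is precisely the input required by Lemma~5.5 of \cite{GrigoryanHu.2014.MMJ505}, so the Dirichlet heat kernel $p_t^B$ exists and satisfies $p_t^B(x,y)\le C\mu(B)^{-1}(w(B)/t)^{1/\nu}$ for all $t>0$. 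The Faber--Krahn bound applied with $D=B$ gives $\lambda_1(B)\ge c/w(B)$, which combined with the on-diagonal bound controls $\|P_t^B\mathbf 1\|_\infty$ by $1$ for small $t$ and by $C(w(B)/t)^{1/(2\nu)}e^{-ct/w(B)}$ for large $t$. Integrating $E^B(x)=\int_0^\infty P_t^B\mathbf 1(x)\,dt$ then yields $\|E^B\|_\infty\le Cw(B)$.

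\textbf{Step 2: an integrated lower bound from $(\mathrm{Cap}_{\leq})$.} Pick $\phi\in\mathrm{cutoff}((2/3)B,B)$ with $\mathcal E(\phi,\phi)\le C\mu(B)/w(B)$ via $(\mathrm{Cap}_{\leq})$. Testing $\mathcal E(E^B,\phi)=(1,\phi)$, using $(1,\phi)\ge\mu((2/3)B)\ge c\mu(B)$ (from $(\mathrm{VD})$) and $\mathcal E(E^B,E^B)=(1,E^B)=\|E^B\|_1$, Cauchy--Schwarz in $\mathcal E$ gives $c^2\mu(B)^2\le\|E^B\|_1\cdot C\mu(B)/w(B)$, hence $\|E^B\|_1\ge c'\mu(B)w(B)$. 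A routine Chebyshev-type argument, combining this with $\|E^B\|_\infty\le Cw(B)$ from Step 1, produces a density estimate
\[
\mu\bigl(\{E^B\ge c_1w(B)\}\cap B\bigr)\ \ge\ c_2\,\mu(B).
\]

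\textbf{Step 3: passing from density to pointwise via $(\mathrm{wEH})$ (i.e.\ $(\mathrm{E}_{\geq})$).} Observe that $E^B$ is $1$-harmonic in $B$ and globally non-negative (in fact vanishes outside $B$), so $(E^B)_-\equiv0$ and the tail $T_{\frac{3}{4}B,B}((E^B)_-)$ in $(\mathrm{wEH})$ vanishes identically. This is exactly the setting in which Lemma \ref{L3} (the lemma of growth for globally non-negative superharmonic functions, obtained earlier in this section from $(\mathrm{wEH})$) applies to $u=E^B$ with $f\equiv1$. Feeding the density estimate of Step 2 into Lemma \ref{L3} converts it into the pointwise bound $\einf_{\delta B}E^B\ge c_3 w(B)$, which is $(\mathrm{E}_{\geq})$.

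\textbf{Main obstacle.} Steps 1 and 2 are essentially bookkeeping: the technical heart lies in Step 3, converting the ball-wide density bound into a pointwise lower bound on a concentric sub-ball. The mismatch is that the density estimate sits on the full $B$, while $(\mathrm{wEH})$ in its bare form upgrades a density condition already available on a smaller concentric ball. This is precisely the gap bridged by Lemma \ref{L3}, which packages a Krylov--Safonov iteration (as in the proof of $(\mathrm{wEH}1)\Rightarrow(\mathrm{wEH})$ in Section \ref{sec:wEHI}), simplified by the vanishing tail available in the globally non-negative regime. The delicate point, therefore, is Lemma \ref{L3}; once it is in hand, the deduction of $(\mathrm{E}_{\geq})$ is a single application of it to $u=E^B$.
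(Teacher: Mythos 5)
Your Steps 1 and 2 are fine in spirit (the paper simply quotes $(\mathrm{FK})\Rightarrow(\mathrm{E}_{\leq})$ from \cite[Theorem 9.4]{GrigoryanHuLau.2015.JMSJ1485}, and your capacity/Cauchy--Schwarz computation giving $\|E^{B}\|_{1}\geq c\,\mu(B)w(B)$ is the same kind of energy test the paper uses), but Step 3 contains a genuine gap. Lemma \ref{L3}, i.e.\ condition $(\mathrm{LG}_{0})$, requires the hypothesis
\begin{equation*}
\frac{\mu (\delta B\cap \{u<a\})}{\mu (\delta B)}\leq \epsilon _{0},
\end{equation*}
that is, the super-level set $\{u\geq a\}$ must fill at least the fraction $1-\epsilon _{0}$ of the \emph{small} ball $\delta B$, where $\epsilon _{0}$ is a fixed constant coming from the lemma. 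Your Chebyshev estimate only gives $\mu (\{E^{B}\geq c_{1}w(B)\}\cap B)\geq c_{2}\mu (B)$ on the \emph{full} ball $B$, with $c_{2}=c'/(2C)$ a possibly very small constant (it cannot be pushed towards $1$ by shrinking $c_{1}$, since it is capped by the ratio of the constants in $(\mathrm{E}_{\leq})$ and in your $L^{1}$ lower bound). So neither the localization to $\delta B$ nor the density threshold $1-\epsilon _{0}$ is available, and Lemma \ref{L3} simply does not apply; it is also not the Krylov--Safonov package you describe --- in the paper it is a one-line application of $(\mathrm{wEH})$ on the same pair of concentric balls, so it cannot transport information from $B$ down to $\delta B$. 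Using $(\mathrm{wEH2})$ instead would remove the threshold issue but not the localization issue: a priori your density estimate says nothing about where inside $B$ the set $\{E^{B}\geq c_{1}w(B)\}$ sits, and upgrading it would require a covering/chaining argument that your proposal does not supply.

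The paper avoids this entirely by a localization trick you miss: instead of $E^{B}$ it takes the solution $u\in\mathcal{F}(B)$ of $\mathcal{E}(u,\varphi )=(1_{\delta B},\varphi )$ for $0\leq\varphi\in\mathcal{F}(B)$. Then $\mathcal{E}(u,u)=(1_{\delta B},u)=\int_{\delta B}u\,d\mu$, so the same $(\mathrm{Cap}_{\leq })$/Cauchy--Schwarz test yields an $L^{1}$ lower bound \emph{on} $\delta B$, namely $\int_{\delta B}u\,d\mu \geq c\,\mu (B)w(B)$. Since $u$ is non-negative, superharmonic in $B$ with $u_{-}\equiv 0$, $(\mathrm{wEH})$ gives directly $\bigl(\fint_{\delta B}u^{p}d\mu \bigr)^{1/p}\leq C_{H}\einf_{\delta B}u$, and the interpolation $\int_{\delta B}u\,d\mu\leq \|u\|_{\infty }^{1-p}\int_{\delta B}u^{p}d\mu$ together with $\|u\|_{\infty }\leq \|E^{B}\|_{\infty }\leq Cw(B)$ and $u\leq E^{B}$ converts the $L^{1}(\delta B)$ bound into $\einf_{\delta B}E^{B}\geq c\,w(B)$, with no density argument and no use of Lemma \ref{L3} at all. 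If you want to keep your route with $E^{B}$ itself, you would have to add a genuine covering/Harnack-chaining step; as written, the passage from your Step 2 to $(\mathrm{E}_{\geq })$ does not go through. (A minor additional point: in Step 1 the set $D=\{u\neq 0\}$ need not be open, so $(\mathrm{FK})$ cannot be applied to it verbatim; this is repairable by the standard level-set truncation, and is anyway bypassed by citing the known implication.)
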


\begin{proof}
Note that by \cite[Theorem 9.4 p.1542]{GrigoryanHuLau.2015.JMSJ1485}%
\begin{equation}
(\mathrm{FK})\Rightarrow (\mathrm{E}_{\leq }),  \label{84}
\end{equation}%
(observing that we only use condition $(\mathrm{FK})$ at this stage).

It remains to show the implication%
\begin{equation}
(\mathrm{VD})+(\mathrm{Cap}_{\leq })+(\mathrm{FK})+(\mathrm{wEH})\Rightarrow
(\mathrm{E}_{\geq }).  \label{83}
\end{equation}%
Let $\delta $ be the same constant as in condition $(\mathrm{wEH})$. Without
loss of generality, assume that $\delta <\frac{2}{3}$. Let $B:=B(x_{0},R)$
be a ball in $M$ with $R<\sigma \overline{R}$. Let $u$ be the unique weak
solution such that 
\begin{equation}
\mathcal{E}(u,\varphi )=(1_{\delta B},\varphi )\text{ \ for any }0\leq
\varphi \in \mathcal{F}(B).  \label{613}
\end{equation}%
It is known that $u\in \mathcal{F}(B)$, $u\geq 0$ in $M$, and $u$ is
superharmonic in $B$, see for example \cite[Lemma 5.1]%
{GrigoryanHu.2014.CJM641}. Applying (\ref{eq:vol_65}) in condition $(\mathrm{%
wEH})$ on the function $u$ and the ball $B$, and with $f=0$, $r=\delta R$,
and noting that $u_{-}=0$ in $M$, we obtain 
\begin{equation}
\left( \fint_{\delta B}u^{p}d\mu \right) ^{1/p}\leq C_{H}\einf_{\delta B}u.
\label{614}
\end{equation}

On the other hand, we have by condition $(\mathrm{Cap}_{\leq })$ 
\begin{equation}
\mathcal{E}(\phi ,\phi )\leq C\frac{\mu (B)}{w(B)}  \label{612}
\end{equation}%
for some $\phi \in \mathrm{cutoff}\left( (2/3)B,B\right) $.

Taking $\varphi =\phi $ in (\ref{613}) and using condition $(\mathrm{VD})$,
we see that 
\begin{equation}
\mathcal{E}(u,\phi )=(1_{\delta B},\phi )=\int_{\delta B}\phi d\mu =\mu
(\delta B)\geq C_{\mu }^{-1}\delta ^{d_{2}}\mu (B).  \label{615}
\end{equation}%
Taking $\varphi =u$ in (\ref{613}) and using the Cauchy-Schwarz inequality
and (\ref{612}), it follows that 
\begin{equation}
\mathcal{E}(u,\phi )\leq \sqrt{\mathcal{E}(u,u)\mathcal{E}(\phi ,\phi )}=%
\sqrt{(1_{\delta B},u)}\sqrt{\mathcal{E}(\phi ,\phi )}\leq C\sqrt{%
\int_{\delta B}ud\mu }\sqrt{\frac{\mu (B)}{w(B)}}.  \label{616}
\end{equation}

Therefore, combining (\ref{615}) and (\ref{616}), we obtain 
\begin{equation}
\int_{\delta B}ud\mu \geq C\mu (B)w(B).  \label{617}
\end{equation}

Since by (\ref{84}) 
\begin{equation*}
||u||_{L^{\infty }}\leq ||E^{B}||_{L^{\infty }}\leq Cw(B),
\end{equation*}%
we conclude by (\ref{614}) that 
\begin{align*}
\int_{\delta B}ud\mu & =\int_{\delta B}u^{p}\cdot u^{1-p}d\mu \leq \left(
Cw(B)\right) ^{1-p}\int_{\delta B}u^{p}d\mu =\left( Cw(B)\right) ^{1-p}\mu
(\delta B)\fint_{\delta B}u^{p}d\mu \\
& \leq C^{\prime }w(B)^{1-p}\mu (B)(\einf_{\delta B}u)^{p}\leq C^{\prime
\prime }w(B)^{1-p}\mu (B)(\einf_{\delta B}E^{B})^{p},
\end{align*}%
thus showing (\ref{E>}) by (\ref{617}). The proof is complete.
\end{proof}

Finally, we show that the weak elliptic Harnack inequality also implies a
Lemma of growth, termed \emph{condition} $(\mathrm{LG}_{0})$, for any \emph{%
global non-negative superharmonic }function.

We say that \emph{condition} $(\mathrm{LG}_{0})$ holds if there exist four
constants $\sigma ,\epsilon _{0},\eta ,\delta \in (0,1)$ such that, for any
ball $B:=B(x_{0},R)$ with radius $R\in (0,\sigma \overline{R})$ and for any $%
u\in \mathcal{F}^{\prime }\cap L^{\infty }$ that is superharmonic in $B$ and
non-negative globally in $M$, if

\begin{equation}
\frac{\mu (\delta B\cap \{u<a\})}{\mu (\delta B)}\leq \epsilon _{0}
\label{701}
\end{equation}%
for some $a>0$, then

\begin{equation}
\einf_{\delta B}u\geq \eta a.  \label{702}
\end{equation}%
We remark that the superharmonic function $u$ in condition $(\mathrm{LG}%
_{0}) $ is required to be non-negative \emph{globally} in $M$, instead of
being non-negative \emph{locally} in condition\emph{\ }$(\mathrm{LG})$ given
in Definition \ref{def-LG}.

\begin{lemma}
\label{L3}Let $(\mathcal{E},\mathcal{F})$ be a regular Dirichlet form in $%
L^{2}$. Then%
\begin{equation}
(\mathrm{wEH})\Rightarrow (\mathrm{LG}_{0}).  \label{85}
\end{equation}
\end{lemma}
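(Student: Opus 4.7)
The plan is to apply condition $(\mathrm{wEH})$ directly to a globally non-negative superharmonic function $u$ on a ball $B=B(x_0,R)$. Two structural simplifications do almost all of the work: since $u\ge 0$ on all of $M$, the negative part $u_-$ vanishes identically, hence the tail $T_{\frac{3}{4}B,B}(u_-)=0$; and since $u$ is superharmonic, we may take $f\equiv 0$ in the definition of $f$-superharmonicity, so $\Vert f\Vert_{L^\infty(B)}=0$. Let $p,\delta,\sigma,C_H$ be the four constants appearing in $(\mathrm{wEH})$; I claim that $(\mathrm{LG}_0)$ holds with the very same $\delta$ and $\sigma$, together with the explicit choices $\epsilon_0=1/2$ and $\eta=2^{-1/p}/C_H$, which lie in $(0,1)$ since $C_H\ge 1$.

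First, for any ball $B=B(x_0,R)$ with $R\in(0,\sigma\overline{R})$, I apply $(\mathrm{wEH})$ to $u$ with the concentric pair $\delta B\subset B$ (that is, $r=\delta R$) and $f=0$. After discarding the vanishing tail and $f$-terms, inequality (\ref{eq:vol_65}) reduces to
\begin{equation*}
\left(\fint_{\delta B}u^{p}\,d\mu\right)^{1/p}\le C_{H}\einf_{\delta B}u.
\end{equation*}

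Next, assuming the hypothesis (\ref{701}) of $(\mathrm{LG}_0)$ with $\epsilon_0=1/2$, I have $\mu(\delta B\cap\{u\ge a\})/\mu(\delta B)\ge 1/2$, and a one-line Chebyshev-type bound yields
\begin{equation*}
\fint_{\delta B}u^{p}\,d\mu\ \ge\ a^{p}\cdot\frac{\mu(\delta B\cap\{u\ge a\})}{\mu(\delta B)}\ \ge\ \tfrac{1}{2}a^{p}.
\end{equation*}
Chaining the two inequalities gives $\einf_{\delta B}u\ge 2^{-1/p}C_{H}^{-1}a=\eta a$, which is (\ref{702}).

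There is no real obstacle: the implication $(\mathrm{wEH})\Rightarrow(\mathrm{LG}_0)$ is essentially immediate because global non-negativity eliminates the tail correction that normally qualifies the weak Harnack inequality, and the resulting plain $L^p$-mean bound converts a lower bound on the measure of a superlevel set into a pointwise (quasi-everywhere) lower bound by Chebyshev. The only care needed is to verify $\eta\in(0,1)$, which holds automatically for $\epsilon_0=1/2$ and $C_H\ge 1$ (if one prefers $\eta$ strictly less than $1$ even when $C_H=1$, take any $\epsilon_0\in(0,1)$ and set $\eta=(1-\epsilon_0)^{1/p}/C_H$).
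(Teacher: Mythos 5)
Your proof is correct and follows essentially the same route as the paper: since $u\geq 0$ globally the tail $T_{\frac{3}{4}B,B}(u_{-})$ vanishes, one applies $(\mathrm{wEH})$ with $r=\delta R$ and $f\equiv 0$, and a Chebyshev bound on $\fint_{\delta B}u^{p}d\mu$ converts the superlevel-set measure bound into $\einf_{\delta B}u\geq \eta a$. The only cosmetic difference is that you fix $\epsilon_{0}=1/2$ while the paper keeps $\epsilon_{0}$ generic with $\eta=C_{H}^{-1}(1-\epsilon_{0})^{1/p}$, a variant you note yourself.
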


\begin{proof}
Let $u\in \mathcal{F}^{\prime }\cap L^{\infty }$ be superharmonic in $B$ and
non-negative globally in $M$. Assume that (\ref{701}) holds, namely,%
\begin{equation*}
\frac{\mu (\delta B\cap \{u\geq a\})}{\mu (\delta B)}=1-\frac{\mu (\delta
B\cap \{u<a\})}{\mu (\delta B)}\geq 1-\epsilon _{0}.
\end{equation*}%
Since $u_{-}=0$ in $M$, we see that 
\begin{equation*}
T_{\frac{3}{4}B_{R},B_{R}}(u_{-})\equiv 0.
\end{equation*}%
Applying (\ref{eq:vol_65}) with $r=\delta R$ and $f\equiv 0$, it follows
that 
\begin{equation*}
\einf_{\delta B}u\geq C_{H}^{-1}\left( \fint_{\delta B}u^{p}d\mu \right)
^{1/p}\geq C_{H}^{-1}a\left( \frac{\mu (\delta B\cap \{u\geq a\})}{\mu
(\delta B)}\right) ^{1/p}\geq C_{H}^{-1}(1-\epsilon _{0})^{1/p}a,
\end{equation*}%
thus showing that (\ref{702}) is true with $\eta =C_{H}^{-1}(1-\epsilon
_{0})^{1/p}$. The proof is complete.
\end{proof}

Lemma \ref{L80} above gives a direct, simpler proof of obtaining a lower
bound of the mean exit time from the weak elliptic Harnack inequality. We
remark that this conclusion can also be obtained in a more indirect way,
without recourse to condition $(\mathrm{FK})$. Indeed, the implication%
\begin{equation*}
(\mathrm{VD})+(\mathrm{Cap}_{\leq })+(\mathrm{LG}_{0})\Rightarrow (\mathrm{E}%
_{\geq })
\end{equation*}%
has been proved in a forthcoming paper \cite{ghh21TP} for any regular
Dirichlet form in $L^{2}$. Combining this with (\ref{85}), we have 
\begin{equation*}
(\mathrm{VD})+(\mathrm{Cap}_{\leq })+(\mathrm{wEH})\Rightarrow (\mathrm{VD}%
)+(\mathrm{Cap}_{\leq })+(\mathrm{LG}_{0})\Rightarrow (\mathrm{E}_{\geq }),
\end{equation*}%
from which, we also obtain condition $(\mathrm{E}_{\geq })$ from the weak
elliptic Harnack inequality but without using $(\mathrm{FK})$. We do need
condition $(\mathrm{FK})$ in Lemma \ref{L80}, not only in deriving condition 
$(\mathrm{E}_{\leq })$ but also in deriving condition $(\mathrm{E}_{\geq })$.

\section{An example}

\label{sect-1}In this section we give an example to illustrate Theorem \ref%
{thm:M1}. We show that the assumptions $(\mathrm{VD})$, $(\mathrm{RVD})$, $(%
\mathrm{Gcap})$, $(\mathrm{TJ})$, $(\mathrm{PI})$ are all satisfied so that
the weak Harnack inequality holds, but the jump kernel does not exist. This
example is essentially taken from \cite[Section 15]{BendikovGrigoryanHu18},
see also \cite{ghh21TP}.

\begin{example}[Ultra-metric space]
\label{ex2}\textrm{Let $\beta ,\alpha _{1},\alpha _{2}$ be three positive
numbers. Let $\left( M_{i},d_{i},\mu _{i}\right) $ for $i=1,2$ be two
ultrametric spaces, where $d_{i}$ is an ultra-metric:%
\begin{equation*}
d_{i}(x,y)\leq \max \{d_{i}(x,z),d_{i}(z,y)\}\text{ \ for all }x,y,z\in
M_{i},
\end{equation*}%
and the measure $\mu _{i}$ is Ahlfors-regular: 
\begin{equation}
C^{-1}r^{\alpha _{i}}\leq \mu _{i}\left( B\left( x_{i},r\right) \right) \leq
Cr^{\alpha _{i}}\text{ for all }x_{i}\in M_{i}\text{ and all }r>0  \label{e1}
\end{equation}%
for some constant $C\geq 1$. Let $J_{i}$ be a function on $M_{i}\times M_{i}$
for $i=1,2$ such that for $\mu _{i}$-almost all $x_{i},y_{i}\in M_{i}$, 
\begin{equation}
J_{i}\left( x_{i},y_{i}\right) =d_{i}\left( x_{i},y_{i}\right) ^{-\left(
\alpha _{i}+\beta \right) }.  \label{e2}
\end{equation}
}

\textrm{Consider the product space $M:=M_{1}\times M_{2}$ equipped with
product measure $\mu :=\mu _{1}\times \mu _{2}$ and the metric 
\begin{equation*}
d(x,y):=\max \left\{ d_{1}\left( x_{1},y_{1}\right) ,d_{2}\left(
x_{2},y_{2}\right) \right\} \text{ for }x=\left( x_{1},x_{2}\right)
,y=\left( y_{1},y_{2}\right) \text{ in }M.
\end{equation*}%
Clearly, $(M,d,\mu )$ is an ultrametric space and for any point $x=\left(
x_{1},x_{2}\right) $ in $M$, the metric ball $B(x,r)$ in $M$ can be written
as 
\begin{equation}
B(x,r)=B(x_{1},r)\times B(x_{2},r)\text{ for any }r>0.  \label{bb}
\end{equation}%
From this, we see that for any point $x=\left( x_{1},x_{2}\right) $ in $M$
and any $r>0$, 
\begin{equation}
V(x,r)=\mu (B(x,r))=\mu _{1}\left( B\left( x_{1},r\right) \right) \mu
_{2}\left( B\left( x_{2},r\right) \right) \asymp r^{\alpha _{1}+\alpha
_{2}}=r^{\alpha },  \label{e3}
\end{equation}%
where $\alpha :=\alpha _{1}+\alpha _{2}$. For simplicity, let the scaling
function }$w(x,r)$\textrm{\ be defined by 
\begin{equation*}
w(x,r)=a(x)r^{\beta }\ \ \text{for any point}\ x\in M\ \text{and any }\ r>0,
\end{equation*}%
where $a(x)$ is a measurable function on $M$ with $C^{-1}\leq a(x)\leq C$
for all $x\in M$ ($C\geq 1$). Clearly, such a function $w$ satisfies (\ref%
{eq:vol_0}) and 
\begin{equation}
C^{-1}r^{\beta }\leq w(x,r)\leq Cr^{\beta }.  \label{ew}
\end{equation}%
}

\textrm{Define the measure $J$ on $\mathcal{B}(M\times M)$ by $%
J(dx,dy)=J(x,dy)\mu (dx)$, where $J(x,dy)$ is a transition function on $%
M\times \mathcal{B}(M)$ given by 
\begin{equation}
J(x,dy)=J_{1}(x_{1},y_{1})\mu _{1}(dy_{1})\delta
_{x_{2}}(dy_{2})+J_{2}(x_{2},y_{2})\mu _{2}(dy_{2})\delta _{x_{1}}(dy_{1})
\label{e4}
\end{equation}%
for any points $x=\left( x_{1},x_{2}\right) ,y=\left( y_{1},y_{2}\right) $
in $M$, where $\delta _{b}(dx)$ is the Dirac measure concentrated at point $%
b $. By (\ref{e3}) and (\ref{e4}), we have for any $r>0$ and any point $%
x=(x_{1},x_{2})\in M$, 
\begin{align}
\int_{B(x,r)^{c}}J(x,dy)& =\int_{B(x,r)^{c}}\left( J_{1}(x_{1},y_{1})\mu
_{1}(dy_{1})\delta _{x_{2}}(dy_{2})+J_{2}(x_{2},y_{2})\mu _{2}(dy_{2})\delta
_{x_{1}}(dy_{1})\right)  \notag \\
& =\int_{B(x_{1},r)^{c}}J_{1}(x_{1},y_{1})\mu
_{1}(dy_{1})+\int_{B(x_{2},r)^{c}}J_{2}(x_{2},y_{2})\mu _{2}(dy_{2})  \notag
\\
& \leq \frac{C}{r^{\beta }}+\frac{C}{r^{\beta }}=\frac{2C}{r^{\beta }}\leq 
\frac{C^{\prime }}{w(x,r)}\text{ \ \ \ \ (using (\ref{e1}), (\ref{e2}) and (%
\ref{ew}))},  \label{tj}
\end{align}%
which is exactly condition $(\mathrm{TJ})$. }

\textrm{Let $\left( \mathcal{E},\mathcal{F}\right) $ be a Dirichlet form in $%
L^{2}\left( M,\mu \right) $ defined by 
\begin{equation*}
\mathcal{E}(u,v)=\iint_{M\times M}\left( u\left( x\right) -u\left( y\right)
\right) \left( v\left( x\right) -v\left( y\right) \right) J\left(
x,dy\right) \mu (dx),\quad u,v\in \mathcal{F},
\end{equation*}%
where the space $\mathcal{F}$ is the closure of the set 
\begin{equation*}
\left\{ \sum_{i=0}^{n}c_{i}\mathbf{1}_{B_{i}}:n\in \mathbb{N},c_{i}\in 
\mathbb{R},B_{i}\text{ is a compact ball }\right\}
\end{equation*}%
under the inner product 
\begin{equation*}
\sqrt{\mathcal{E}(\cdot ,\cdot )+(\cdot ,\cdot )_{L^{2}\left( M,\mu \right) }%
}.
\end{equation*}
}

\textrm{By \cite[Theorem 2.2]{BendikovGrigoryanHu18}, the form $\left( 
\mathcal{E},\mathcal{F}\right) $ is regular and non-local. By (\ref{e3}),
the measure $\mu $ satisfies conditions $(\mathrm{VD})$ and $(\mathrm{RVD})$%
, whilst condition $(\mathrm{Gcap})$ automatically holds since it follows
directly from condition $(\mathrm{TJ})$ and the ultrametric property. Hence,
conditions $(\mathrm{VD})$, $(\mathrm{RVD})$, $(\mathrm{Gcap})$, $(\mathrm{TJ%
})$ in Theorem \ref{thm:M1} are satisfied. }

\textrm{It remains to verify condition $(\mathrm{PI})$. Indeed, let $%
B:=B(x^{\prime },r)$ be a metric ball in $M$. Writing up $x^{\prime
}=(x_{0},y_{0})$ with $x_{0}\in M_{1}$, $y_{0}\in M_{2}$, we see $%
B=B(x_{0},r)\times B(y_{0},r)$ by using (\ref{bb}). By (\ref{e4}), (\ref{e2})%
\begin{eqnarray*}
\int_{B}\int_{B}(u(x)-u(y))^{2}J(x,dy)\mu (dx) &=&\int_{B}\left\{
\int_{B(x_{0},r)}\frac{(u(x_{1},x_{2})-u(y_{1},x_{2}))^{2}}{%
d_{1}(x_{1},y_{1})^{\alpha _{1}+\beta }}\mu _{1}(dy_{1})\right. \\
&&+\left. \int_{B(y_{0},r)}\frac{(u(x_{1},x_{2})-u(x_{1},y_{2}))^{2}}{%
d_{2}(x_{2},y_{2})^{\alpha _{2}+\beta }}\mu _{2}(dy_{2})\right\} \mu (dx).
\end{eqnarray*}%
The first integral on the right-hand side is estimated as follows: for any $%
(x_{1},x_{2})\in B(x_{0},r)\times B(y_{0},r)$, 
\begin{eqnarray*}
&&\int_{B(x_{0},r)}\frac{(u(x_{1},x_{2})-u(y_{1},x_{2}))^{2}}{%
d_{1}(x_{1},y_{1})^{\alpha _{1}+\beta }}\mu _{1}(dy_{1}) \\
&\geq &\int_{B(x_{0},r)}\frac{(u(x_{1},x_{2})-u(y_{1},x_{2}))^{2}}{r^{\alpha
_{1}+\beta }}\mu _{1}(dy_{1}) \\
&\geq &C^{-1}\int_{B(y_{0},r)}\int_{B(x_{0},r)}\frac{%
(u(x_{1},x_{2})-u(y_{1},x_{2}))^{2}}{r^{\alpha _{1}+\alpha _{2}+\beta }}\mu
_{1}(dy_{1})\mu _{2}(dy_{2}) \ \ \text{(using (\ref{e1}))} \\
&=&C^{-1}\int_{B}\frac{(u(x_{1},x_{2})-u(y_{1},x_{2}))^{2}}{r^{\alpha +\beta
}}\mu (dy)
\end{eqnarray*}%
by using the fact that $\alpha _{1}+\alpha _{2}=\alpha $, from which, we have%
\begin{equation*}
\int_{B}\int_{B(x_{0},r)}\frac{(u(x_{1},x_{2})-u(y_{1},x_{2}))^{2}}{%
d_{1}(x_{1},y_{1})^{\alpha _{1}+\beta }}\mu _{1}(dy_{1})\mu (dx)\geq
C^{-1}\int_{B}\int_{B}\frac{(u(x_{1},x_{2})-u(y_{1},x_{2}))^{2}}{r^{\alpha
+\beta }}\mu (dy)\mu (dx).
\end{equation*}%
Similarly, the second integral is estimated by 
\begin{eqnarray*}
&&\int_{B}\int_{B(y_{0},r)}\frac{(u(x_{1},x_{2})-u(x_{1},y_{2}))^{2}}{%
d_{2}(x_{2},y_{2})^{\alpha _{2}+\beta }}\mu _{2}(dy_{2})\mu (dx) \\
&\geq &C^{-1}\int_{B}\int_{B}\frac{(u(x_{1},x_{2})-u(x_{1},y_{2}))^{2}}{%
r^{\alpha +\beta }}\mu (dy)\mu (dx) \\
&=&C^{-1}\int_{B}\int_{B}\frac{(u(y_{1},y_{2})-u(y_{1},x_{2}))^{2}}{%
r^{\alpha +\beta }}\mu (dx)\mu (dy)\text{ \ (swapping }(x_{1},x_{2})\text{
with }(y_{1},y_{2})\text{).}
\end{eqnarray*}%
Therefore, we conclude from above that, using the elementary inequality $%
a^{2}+b^{2}\geq (a+b)^{2}/2$,%
\begin{eqnarray*}
\int_{B}\int_{B}(u(x)-u(y))^{2}J(x,dy)\mu (dx) &\geq &C^{-1}\left\{
\int_{B}\int_{B}\frac{(u(x_{1},x_{2})-u(y_{1},x_{2}))^{2}}{r^{\alpha +\beta }%
}\mu (dy)\mu (dx)\right. \\
&&+\left. \int_{B}\int_{B}\frac{(u(y_{1},y_{2})-u(y_{1},x_{2}))^{2}}{%
r^{\alpha +\beta }}\mu (dx)\mu (dy)\right\} \\
&\geq &C^{-1}\int_{B}\int_{B}\frac{(u(x_{1},x_{2})-u(y_{1},y_{2}))^{2}}{%
2r^{\alpha +\beta }}\mu (dx)\mu (dy) \\
&\geq &\frac{C^{\prime }r^{-\beta }}{\mu (B)}\int_{B}\int_{B}(u(x)-u(y))^{2}%
\mu (dx)\mu (dy)\ \ \text{(using (\ref{e3}))} \\
&=&2C^{\prime }r^{-\beta }\int_{B}(u-{u_{B}})^{2}d\mu \\
&\geq &\frac{C}{w(B)}\int_{B}(u-{u_{B}})^{2}d\mu \ \ \text{(using (\ref{ew}%
)),}
\end{eqnarray*}%
thus showing that condition $\mathrm{(PI)}$ with $\kappa $ $=1$ is
satisfied. }

\textrm{Therefore, all the hypotheses in Theorem \ref{thm:M1} are satisfied,
and the weak elliptic Harnack inequality follows. We mention that the jump
kernel does not exist by (\ref{e4}) in this case. }
\end{example}

\section[John-Nirenberg inequality]{Appendix}

In this appendix, we collect some known results that have been cited in this
paper. Recall the John-Nirenberg inequality for $\text{BMO}$ functions on a
doubling space.

\begin{definition}[$\text{BMO}$ function]
\label{df:BMO} For a locally integrable function $u$ on an open set $\Omega $%
, the seminorm $||u||_{\mathrm{BMO}(\Omega )}$ is defined by 
\begin{equation*}
||u||_{\mathrm{BMO}(\Omega )}:=\sup_{B\subset \Omega }\fint_{B}|u-u_{B}|d\mu
,
\end{equation*}%
where the supremum is taken over all the balls contained in $\Omega $. The
space $\mathrm{BMO}(\Omega )$ consists of all locally integrable functions $%
u $ on $\Omega $ such that $||u||_{\mathrm{BMO}(\Omega )}<\infty $.
\end{definition}

The following was addressed in \cite[Theorem 5.2]%
{AaltoBerkovitsKansanenYue.2011.SM21}.

\begin{lemma}[John-Nirenberg inequality]
\label{lemma:JN}Let $(M,d,\mu )$ be a metric measure space satisfying
condition $(\mathrm{VD})$. If $u\in \mathrm{BMO}(\Omega )$ for a non-empty
open subset $\Omega $ of $M$, then 
\begin{equation*}
\mu (\{x\in B:|u-u_{B}|>\lambda \})\geq c_{1}\mu (B)\exp \left( -\frac{%
c_{2}\lambda }{||u||_{\mathrm{BMO}(\Omega )}}\right)
\end{equation*}%
for any ball with $12B\subseteq \Omega $ and any $\lambda >0$, where
constants $c_{1},c_{2}$ are independent of $u,\lambda ,\Omega $ and ball $B$.
\end{lemma}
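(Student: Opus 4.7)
I should flag at the outset that, as literally written, the inequality direction ($\geq$) appears to be a typographical slip: a bounded non-constant BMO function has $\mu(\{|u-u_B|>\lambda\})=0$ for $\lambda$ exceeding $\esup_B|u-u_B|$, while the right-hand side is positive for any finite $\lambda$. The classical John--Nirenberg inequality, and the only version compatible with the use made later in the paper (in Corollary \ref{cor:JN}, to exponentiate $\frac{c}{b}g$ and get the crossover lemma), is the upper bound
\[
\mu(\{x\in B:|u-u_{B}|>\lambda\})\leq c_{1}\mu(B)\exp\!\left(-\frac{c_{2}\lambda}{\|u\|_{\mathrm{BMO}(\Omega)}}\right),
\]
for all balls with $12B\subseteq\Omega$ and all $\lambda>0$. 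My plan is to prove this version (which is what Aalto--Berkovits--Kansanen--Yue actually establish in the cited Theorem~5.2).

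The strategy is a Calder\'on--Zygmund stopping-time argument adapted to a doubling metric space. After normalizing $\|u\|_{\mathrm{BMO}(\Omega)}=1$ and fixing a ball $B$ with $12B\subseteq\Omega$, I introduce, for $\lambda>0$, the quantity
\[
F(\lambda):=\sup_{B'}\frac{\mu(\{x\in B':|u-u_{B'}|>\lambda\})}{\mu(B')},
\]
where the supremum is taken over all balls $B'$ with $12B'\subseteq\Omega$. The goal is to derive a self-improvement of the form
\[
F(\lambda+A)\leq \tfrac12\, F(\lambda)\qquad (\lambda>0),
\]
for a universal constant $A>1$ depending only on the doubling constant $C_\mu$; iterating this inequality on the arithmetic progression $\{kA\}_{k\geq 0}$ yields the exponential decay with $c_{1}=2$ and $c_{2}=(\log 2)/A$.

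To obtain the recursion, I would: (i) fix a ball $B'$ with $12B'\subseteq\Omega$ and use a Whitney/Vitali-type covering on the superlevel set $E_\lambda:=\{x\in B':|u-u_{B'}|>\lambda\}$ to select countably many balls $\{B_i\}$ with controlled overlap such that $E_\lambda\subseteq \bigcup_i B_i\subset 12B'\subseteq\Omega$, $\fint_{B_i}|u-u_{B'}|\,d\mu>\lambda$, but $\fint_{cB_i}|u-u_{B'}|\,d\mu\leq\lambda$ for a fixed dilation $c>1$ (this replaces the dyadic CZ decomposition; the doubling condition $(\mathrm{VD})$ is used crucially here, as in Heinonen, Theorem~1.16). (ii) From the stopping inequality together with doubling, deduce $|u_{B_i}-u_{B'}|\leq A$ for some $A=A(C_\mu)$. (iii) Conclude that for $x\in B_i$, $|u(x)-u_{B'}|>\lambda+A$ forces $|u(x)-u_{B_i}|>\lambda$, giving
\[
\mu(\{x\in B_i:|u-u_{B'}|>\lambda+A\})\leq \mu(\{x\in B_i:|u-u_{B_i}|>\lambda\})\leq F(\lambda)\mu(B_i).
\]
(iv) Summing over $i$ and using $\sum_i\mu(B_i)\leq \tfrac12 F(\lambda)^{-1}\mu(E_\lambda)$ (obtained from the stopping condition $\fint_{B_i}|u-u_{B'}|\,d\mu>\lambda$ combined with $\|u\|_{\mathrm{BMO}}=1$, after possibly enlarging $A$) produces the desired recursion.

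The main obstacle is step (i): the absence of a genuine dyadic structure on $(M,d,\mu)$ forces one to work with a Vitali covering, and one must verify that the almost-disjoint balls produced by the covering lemma satisfy both the pointwise covering of $E_\lambda$ (via Lebesgue differentiation in a doubling space) \emph{and} a quantitative overlap bound. Once the covering is in place, steps (ii)--(iv) are elementary and the iteration is mechanical. A further minor point is tracking the constant $12$: the dilation factor in the hypothesis $12B\subseteq\Omega$ must dominate the product of dilations incurred during the Vitali selection and the subsequent ball-enlargement in step (iii); any fixed dilation works, and $12$ is a convenient choice used by the cited reference.
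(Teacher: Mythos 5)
Two remarks on context before the substance. The paper itself contains no proof of this lemma: it is quoted from \cite[Theorem 5.2]{AaltoBerkovitsKansanenYue.2011.SM21}, so your argument can only be measured against the standard literature proof. Your opening diagnosis is correct: the displayed ``$\geq $'' must be a misprint for ``$\leq $'', since for a bounded non-constant $u$ the left-hand side vanishes once $\lambda >\esup_{B}|u-u_{B}|$ while the right-hand side stays positive, and only the upper bound is compatible with the exponentiation carried out in Lemma \ref{cor:JN} and hence with the crossover Lemma \ref{thm:M4}.

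Your overall route (Vitali/Whitney covering in place of dyadic cubes, a bound on $|u_{B_{i}}-u_{B^{\prime }}|$, and the iteration $F(\lambda +A)\leq \frac{1}{2}F(\lambda )$) is the standard one, but as written the core steps are mutually inconsistent, and this is a genuine gap: you stop at height $\lambda $. From your stopping inequalities $\fint_{B_{i}}|u-u_{B^{\prime }}|\,d\mu >\lambda $ and $\fint_{cB_{i}}|u-u_{B^{\prime }}|\,d\mu \leq \lambda $, doubling gives only $|u_{B_{i}}-u_{B^{\prime }}|\leq \fint_{B_{i}}|u-u_{B^{\prime }}|\,d\mu \leq C(C_{\mu },c)\,\lambda $ (equivalently $\leq \lambda +C\Vert u\Vert _{\mathrm{BMO}(\Omega )}$ via $u_{cB_{i}}$); no bound $A(C_{\mu })$ uniform in $\lambda $ is available, because the stopping balls are exactly those on which the averaged oscillation has reached $\lambda $. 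Hence step (ii) is false, step (iii) only yields $|u(x)-u_{B_{i}}|>\lambda +A-C\lambda $, and the claimed recursion collapses; likewise the bound in step (iv), $\sum_{i}\mu (B_{i})\leq \frac{1}{2}F(\lambda )^{-1}\mu (E_{\lambda })$, does not follow from the stated stopping condition --- the weak-type estimate gives $\sum_{i}\mu (B_{i})\leq \frac{N}{\lambda }\int_{12B^{\prime }}|u-u_{B^{\prime }}|\,d\mu \leq \frac{NC}{\lambda }\mu (B^{\prime })$, which involves $\mu (B^{\prime })$, not $\mu (E_{\lambda })$. The standard repair: normalize $\Vert u\Vert _{\mathrm{BMO}(\Omega )}=1$ and stop at a \emph{fixed} height $K_{0}=K_{0}(C_{\mu },N)$, e.g.\ $K_{0}=2NC$ where $C$ is the constant in $\int_{12B^{\prime }}|u-u_{B^{\prime }}|\,d\mu \leq C\mu (B^{\prime })$ (this is where $12B^{\prime }\subseteq \Omega $ and doubling enter). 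Then both facts you need hold simultaneously: $|u_{B_{i}}-u_{B^{\prime }}|\leq C^{\prime }K_{0}=:A$ with $A=A(C_{\mu })$, and $\sum_{i}\mu (B_{i})\leq \frac{N}{K_{0}}\int_{12B^{\prime }}|u-u_{B^{\prime }}|\,d\mu \leq \frac{1}{2}\mu (B^{\prime })$; since $|u-u_{B^{\prime }}|\leq K_{0}\leq A$ a.e.\ off $\bigcup_{i}B_{i}$ by Lebesgue differentiation (valid under $(\mathrm{VD})$), one gets for every $\lambda >0$ that $\mu (\{x\in B^{\prime }:|u-u_{B^{\prime }}|>\lambda +A\})\leq \sum_{i}\mu (\{x\in B_{i}:|u-u_{B_{i}}|>\lambda \})\leq \frac{1}{2}F(\lambda )\mu (B^{\prime })$, i.e.\ $F(\lambda +A)\leq \frac{1}{2}F(\lambda )$, and iteration with $F\leq 1$ gives the exponential bound with $c_{1}=2$, $c_{2}=(\ln 2)/A$. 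Two pieces of bookkeeping remain and should be stated: the enlarged balls $cB_{i}$ must lie in $\Omega $ so that the $\mathrm{BMO}(\Omega )$ seminorm applies, and the $B_{i}$ must belong to the class of balls over which $F$ is defined; both are arranged by forcing the covering radii to be a small fixed fraction of the radius of $B^{\prime }$, and this is precisely where the hypothesis $12B\subseteq \Omega $ is consumed.
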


The following is a folklore, see for example \cite[Corollary 5.6]%
{BiroliMosco.1995.AMPA4125}.

\begin{lemma}
\label{cor:JN} Let $(M,d,\mu )$ be a metric measure space satisfying
condition $(\mathrm{VD})$. Let $B_{0}:=B(x_{0},R)$ be a ball in $M$. Then
for any $u\in \mathrm{BMO}(B_{0})$ 
\begin{equation}
\left\{ \fint_{B}\exp \left( \frac{c_{2}}{2b}u\right) d\mu \right\} \left\{ %
\fint_{B}\exp \left( -\frac{c_{2}}{2b}u\right) d\mu \right\} \leq
(1+c_{1})^{2}  \label{58}
\end{equation}%
for any ball $B$ with $12B\subseteq B_{0}$ and any $b\geq ||u||_{\mathrm{BMO}%
(B_{0})}$, where the constants $c_{1},c_{2}$ are the same as in Lemma \ref%
{lemma:JN}.
\end{lemma}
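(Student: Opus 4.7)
The plan is to derive this corollary directly from the distribution-function form of the John–Nirenberg inequality in Lemma \ref{lemma:JN}. The starting observation is that the product in \eqref{58} is unchanged if we subtract the average $u_B$ in the exponents, because the factors $e^{\pm (c_2/(2b)) u_B}$ multiply to $1$. Hence it suffices to prove, for $\alpha := c_2/(2b)$ and for any sign choice, that
\begin{equation*}
\fint_{B} \exp\bigl(\pm \alpha (u - u_B)\bigr)\, d\mu \leq 1 + c_1,
\end{equation*}
after which \eqref{58} follows by multiplying the two bounds.

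To establish this single-sided bound, I would use the layer-cake representation
\begin{equation*}
\fint_{B} e^{\alpha |u - u_B|}\, d\mu
= 1 + \alpha \int_0^\infty e^{\alpha \lambda}\, \frac{\mu\bigl(\{x \in B : |u(x) - u_B| > \lambda\}\bigr)}{\mu(B)}\, d\lambda,
\end{equation*}
valid for any $\alpha > 0$. Since $12B \subseteq B_0$, Lemma \ref{lemma:JN} (read in the standard direction, with the distribution function bounded \emph{above} by $c_1 \exp(-c_2 \lambda / \|u\|_{\mathrm{BMO}(B_0)})$) applies to $B$, giving
\begin{equation*}
\fint_{B} e^{\alpha |u - u_B|}\, d\mu
\leq 1 + c_1 \alpha \int_0^\infty \exp\!\left(\alpha \lambda - \frac{c_2 \lambda}{\|u\|_{\mathrm{BMO}(B_0)}}\right) d\lambda.
\end{equation*}
With the choice $\alpha = c_2/(2b)$ and the hypothesis $b \geq \|u\|_{\mathrm{BMO}(B_0)}$, one has $\alpha \leq c_2/(2\|u\|_{\mathrm{BMO}(B_0)})$, so the exponent in the integrand is at most $-c_2 \lambda / (2\|u\|_{\mathrm{BMO}(B_0)})$. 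The elementary computation
\begin{equation*}
c_1 \alpha \int_0^\infty e^{-c_2 \lambda / (2\|u\|_{\mathrm{BMO}(B_0)})}\, d\lambda
= c_1 \cdot \frac{c_2}{2b} \cdot \frac{2 \|u\|_{\mathrm{BMO}(B_0)}}{c_2}
= \frac{c_1 \|u\|_{\mathrm{BMO}(B_0)}}{b} \leq c_1
\end{equation*}
then yields $\fint_B e^{\alpha |u - u_B|} d\mu \leq 1 + c_1$. Combining with the remark that $\exp(\pm \alpha(u - u_B)) \leq \exp(\alpha |u - u_B|)$ produces the required bound on each factor, and taking the product gives \eqref{58}.

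There is no substantive obstacle in this argument; the only point requiring care is the role of the hypothesis $12B \subseteq B_0$, which ensures that the supremum defining $\|u\|_{\mathrm{BMO}(B_0)}$ actually controls averages over $B$ through Lemma \ref{lemma:JN} — i.e., one must not try to apply John–Nirenberg on a ball sticking out of $B_0$. The factor $2$ in the choice $\alpha = c_2/(2b)$ is exactly what is needed to leave a strictly negative exponent in the $\lambda$-integral and make the computation close.
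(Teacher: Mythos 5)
Your argument is correct. The paper offers no proof of this lemma (it is cited as folklore from \cite{BiroliMosco.1995.AMPA4125}), and what you give is the standard derivation: reduce to the centered quantity by cancelling the factors $e^{\pm (c_2/(2b))u_B}$, apply the layer-cake formula to $e^{\alpha|u-u_B|}$, insert the John--Nirenberg distributional bound, and check that the choice $\alpha=c_2/(2b)$ with $b\geq \|u\|_{\mathrm{BMO}(B_0)}$ leaves a convergent integral equal to at most $c_1$. You also correctly read Lemma \ref{lemma:JN} in the standard direction (the displayed inequality there has its sign reversed by an evident typo; as written, with $\geq$, it would be vacuous as $\lambda\to 0$), which is the right call. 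The only degenerate case not mentioned is $\|u\|_{\mathrm{BMO}(B_0)}=0$, where the distribution function vanishes identically and each average equals $1$, so the bound holds trivially.
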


The following has been proved in a forthcoming paper \cite{ghh21TP}.

\begin{proposition}
\label{P61}Let $(\mathcal{E},\mathcal{F})$ be a regular Dirichlet form in $%
L^{2}$ without killing part. Assume that a function $F\in C^{2}(\mathbb{R})$
satisfies 
\begin{equation*}
F^{\prime \prime }\geq 0,~\sup_{\mathbb{R}}|F^{\prime }|<\infty ,~\sup_{%
\mathbb{R}}F^{\prime \prime }<\infty .
\end{equation*}%
Then for any $u,\varphi \in \mathcal{F}^{\prime }\cap L^{\infty }$, both
functions $F(u),F^{\prime }(u)\varphi $ belong to the space $\mathcal{F}%
^{\prime }\cap L^{\infty }$. Moreover, if further $\varphi \geq 0$ in $M$,
then 
\begin{equation}
\mathcal{E}(F(u),\varphi )\leq \mathcal{E}(u,F^{\prime }(u)\varphi ).
\label{eq61}
\end{equation}
\end{proposition}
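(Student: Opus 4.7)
The plan is to split the statement into the algebraic/membership claim (that $F(u)$ and $F'(u)\varphi$ lie in $\mathcal{F}^{\prime }\cap L^{\infty }$) and the analytic inequality, and then to attack the inequality by decomposing $\mathcal{E}=\mathcal{E}^{(L)}+\mathcal{E}^{(J)}$ and handling each part by a separate mechanism.

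First I would verify membership. Write $u=v+a$ and $\varphi =w+b$ with $v,w\in \mathcal{F}$ and $a,b\in \mathbb{R}$. Because $F$ has bounded derivative, the map $s\mapsto F(s+a)-F(a)$ is Lipschitz and vanishes at $0$, so by the normal-contraction property of the regular Dirichlet form $(\mathcal{E},\mathcal{F})$ we have $F(u)-F(a)\in \mathcal{F}$, which yields $F(u)\in \mathcal{F}^{\prime}$; boundedness of $u$ and Lipschitzness of $F$ give $F(u)\in L^{\infty}$. The same reasoning applied to $F^{\prime }$ (which is Lipschitz because $\sup F^{\prime \prime }<\infty $) shows $F^{\prime }(u)\in \mathcal{F}^{\prime }\cap L^{\infty }$. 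Since $\mathcal{F}\cap L^{\infty }$ is an algebra, expanding $F^{\prime}(u)\varphi =(F^{\prime }(u)-F^{\prime }(a))w+F^{\prime }(a)w+b(F^{\prime}(u)-F^{\prime }(a))+bF^{\prime }(a)$ shows $F^{\prime }(u)\varphi \in \mathcal{F}^{\prime }\cap L^{\infty }$.

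For the local part I would invoke the Leibniz and chain rules for the energy measure $d\Gamma ^{(L)}$ (extended to $\mathcal{F}^{\prime}\cap L^{\infty }$ via $d\Gamma ^{(L)}\langle u+c\rangle =d\Gamma ^{(L)}\langle u\rangle $) to write
\begin{equation*}
\mathcal{E}^{(L)}(u,F^{\prime }(u)\varphi )=\int F^{\prime }(u)\,d\Gamma ^{(L)}\langle u,\varphi \rangle +\int \varphi F^{\prime \prime}(u)\,d\Gamma ^{(L)}\langle u\rangle .
\end{equation*}
The second integrand is non-negative by the hypotheses $\varphi \geq 0$ and $F^{\prime \prime }\geq 0$, while the first integral equals $\mathcal{E}^{(L)}(F(u),\varphi )$ by the chain rule $d\Gamma^{(L)}\langle F(u),\varphi \rangle =F^{\prime }(u)\,d\Gamma^{(L)}\langle u,\varphi \rangle $. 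This gives $\mathcal{E}^{(L)}(F(u),\varphi )\leq \mathcal{E}^{(L)}(u,F^{\prime }(u)\varphi )$.

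For the jump part the inequality is pointwise on $M\times M\setminus \mathrm{diag}$. Set $a=F(u(x))-F(u(y))$, $b=u(x)-u(y)$, $c_{x}=F^{\prime }(u(x))$, $c_{y}=F^{\prime }(u(y))$. The convexity of $F$ gives the two supporting-line bounds $bc_{y}\leq a\leq bc_{x}$, so
\begin{equation*}
b\bigl(c_{x}\varphi (x)-c_{y}\varphi (y)\bigr)-a\bigl(\varphi (x)-\varphi(y)\bigr)=(bc_{x}-a)\varphi (x)+(a-bc_{y})\varphi (y)\geq 0,
\end{equation*}
since both coefficients and both values of $\varphi $ are non-negative. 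Integrating against $J(dx,dy)$ yields $\mathcal{E}^{(J)}(F(u),\varphi )\leq \mathcal{E}^{(J)}(u,F^{\prime }(u)\varphi )$, and adding the two pieces completes the proof.

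The main obstacle is justifying the Leibniz/chain rules and the integrability of all terms in the local computation when $u,\varphi $ lie in $\mathcal{F}^{\prime }\cap L^{\infty }$ rather than $\mathcal{F}\cap L^{\infty }$. I would handle this by reducing to $\mathcal{F}\cap L^{\infty}$: using $d\Gamma ^{(L)}$'s translation invariance in constants, all energy-measure identities reduce to corresponding ones for $v,w\in \mathcal{F}\cap L^{\infty }$, where the classical Beurling--Deny/Fukushima calculus (\cite[Ch.~3]{FukushimaOshimaTakeda.2011.489}) applies directly; the boundedness of $F^{\prime }$ and $F^{\prime \prime}$ then secures the finiteness of every integral appearing.
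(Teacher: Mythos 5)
The paper itself gives no proof of Proposition \ref{P61}: it is quoted as ``proved in a forthcoming paper \cite{ghh21TP}'', so there is nothing in this text to compare your argument against line by line. On its own merits your proof is correct and is the natural argument. The membership part is fine: $s\mapsto F(s+a)-F(a)$ and $s\mapsto F'(s+a)-F'(a)$ are Lipschitz maps vanishing at $0$ (by $\sup|F'|<\infty$ and $\sup F''<\infty$), so composition with $v\in\mathcal{F}$ stays in $\mathcal{F}$ by the (rescaled) normal-contraction property, and the algebra property of $\mathcal{F}\cap L^{\infty}$ handles the product $F'(u)\varphi$ after your four-term expansion. The local part via the Leibniz and chain rules for $d\Gamma^{(L)}$, discarding $\int\varphi F''(u)\,d\Gamma^{(L)}\langle u\rangle\geq 0$, is exactly the standard Fukushima--Oshima--Takeda calculus, and the translation-invariance in constants reduces everything to $\mathcal{F}\cap L^{\infty}$ as you say. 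The jump part is the cleanest piece: the two supporting-line inequalities $bc_{y}\leq a\leq bc_{x}$ from convexity make the integrand difference $(bc_{x}-a)\varphi(x)+(a-bc_{y})\varphi(y)$ manifestly non-negative. The only points worth making explicit are (i) that the integral representation (\ref{eq:jump-DF}) of $\mathcal{E}^{(J)}$, stated in the paper only for $u\in\mathcal{F}\cap C_{0}(M)$, extends to $\mathcal{F}'\cap L^{\infty}$ (constants drop out of all differences, and the extension to $\mathcal{F}\cap L^{\infty}$ is standard), and (ii) that each of the two integrals you compare is separately absolutely convergent before you subtract them pointwise; your bounds $|a|\leq\|F'\|_{\infty}|b|$ and $|c_{x}\varphi(x)-c_{y}\varphi(y)|\leq\|F'\|_{\infty}|\varphi(x)-\varphi(y)|+\|\varphi\|_{\infty}\|F''\|_{\infty}|b|$ together with Cauchy--Schwarz do secure this, so it is a matter of writing it down rather than a gap. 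It is also worth noting that the paper's Proposition \ref{P21} applies this proposition to the specific smooth convex functions $F_{n}$ of (\ref{F1}) precisely because the target function $r\mapsto r_{+}$ is not $C^{2}$; your proof covers the $C^{2}$ case asserted, which is all that is claimed.
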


The following is taken from in \cite[Lemma 2.12]{MaRockner.1992.209}.

\begin{lemma}
\label{lem:A1} Let $(\mathcal{E},\mathcal{F})$ be a Dirichlet form in $L^{2}$%
. If each $f_{n}\in \mathcal{F}$ and 
\begin{equation*}
f_{n}\overset{L^{2}}{\rightarrow }f,\text{ \ }\sup_{n}\mathcal{E}%
(f_{n})<\infty ,
\end{equation*}%
then $f\in \mathcal{F}$, and there exists a subsequence, still denoted by $%
\{f_{n}\}$, such that $f_{n}\overset{\mathcal{E}}{\rightharpoonup }f$
weakly, that is, 
\begin{equation*}
\mathcal{E}(f_{n},\varphi )\rightarrow \mathcal{E}(f,\varphi )
\end{equation*}%
as $n\rightarrow \infty $ for any $\varphi \in \mathcal{F}$. Moreover, we
have 
\begin{equation*}
\mathcal{E}(f)\leq \liminf_{n\rightarrow \infty }\mathcal{E}(f_{n}).
\end{equation*}
\end{lemma}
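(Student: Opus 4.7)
The plan is to exploit the Hilbert space structure $(\mathcal{F}, \mathcal{E}_1)$ with $\mathcal{E}_1(u,v):=\mathcal{E}(u,v)+(u,v)_{L^2}$, and then use the standard fact that bounded sequences in a Hilbert space have weakly convergent subsequences. The only thing one has to check carefully is that the weak limit in $\mathcal{F}$ coincides with the $L^2$-limit $f$.

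First I would observe that since $f_n\to f$ in $L^2$, the sequence $\{\|f_n\|_2\}$ is bounded, and together with the hypothesis $\sup_n \mathcal{E}(f_n)<\infty$ this yields $\sup_n \mathcal{E}_1(f_n)<\infty$. Since $(\mathcal{F},\mathcal{E}_1)$ is a Hilbert space (this is part of the definition of a Dirichlet form), the Banach--Alaoglu theorem (equivalently, reflexivity) gives a subsequence $\{f_{n_k}\}$ and some $g\in \mathcal{F}$ such that $f_{n_k}\overset{\mathcal{E}_1}{\rightharpoonup}g$, i.e.\ $\mathcal{E}_1(f_{n_k},\varphi)\to \mathcal{E}_1(g,\varphi)$ for every $\varphi\in\mathcal{F}$.

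Next I would identify $g=f$. Weak $\mathcal{E}_1$-convergence implies weak $L^2$-convergence on test functions in $\mathcal{F}$, because for any $\psi\in L^2$ one can first approximate $\psi$ by elements of $\mathcal{F}$ (which is dense in $L^2$), and then use the $L^2$-boundedness of $\{f_{n_k}\}$ together with an $\varepsilon/2$ argument; but in fact we have the stronger information that $f_{n_k}\to f$ strongly in $L^2$. Thus $f_{n_k}\rightharpoonup f$ weakly in $L^2$, and since weak limits are unique this forces $g=f\in\mathcal{F}$. Pass to the weak $\mathcal{E}_1$-convergence and subtract the $L^2$-inner product to get $\mathcal{E}(f_{n_k},\varphi)\to \mathcal{E}(f,\varphi)$ for every $\varphi\in\mathcal{F}$.

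Finally, for the semicontinuity bound $\mathcal{E}(f)\leq \liminf_k \mathcal{E}(f_{n_k})$, I would take $\varphi=f$ in the weak convergence and apply the Cauchy--Schwarz inequality in $(\mathcal{F},\mathcal{E})$:
\[
\mathcal{E}(f)=\lim_{k\to\infty}\mathcal{E}(f_{n_k},f)\leq \liminf_{k\to\infty}\sqrt{\mathcal{E}(f_{n_k})\,\mathcal{E}(f)},
\]
which after dividing by $\sqrt{\mathcal{E}(f)}$ (the case $\mathcal{E}(f)=0$ is trivial) gives the desired inequality. The only mild obstacle is making the uniqueness-of-limits step airtight, but since strong $L^2$-convergence is stronger than weak $L^2$-convergence, this is automatic.
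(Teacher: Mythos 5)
The paper does not actually prove this lemma: it is quoted verbatim from \cite[Lemma 2.12]{MaRockner.1992.209}, so there is no in-paper argument to compare against. Your route --- weak compactness of bounded sets in the Hilbert space $(\mathcal{F},\mathcal{E}_{1})$, identification of the weak limit with the $L^{2}$-limit, and Cauchy--Schwarz for the lower semicontinuity --- is the standard proof and is essentially the one in Ma--R\"{o}ckner. The compactness step, the final identity $\mathcal{E}(f_{n_k},\varphi)\to\mathcal{E}(f,\varphi)$ obtained by subtracting the $L^{2}$ inner products, and the semicontinuity argument via $\mathcal{E}(f)=\lim_k\mathcal{E}(f_{n_k},f)\le\liminf_k\mathcal{E}(f_{n_k})^{1/2}\mathcal{E}(f)^{1/2}$ are all correct.

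The one place where your justification does not hold up is the claim that weak $\mathcal{E}_{1}$-convergence implies weak $L^{2}$-convergence. The density-plus-$\varepsilon/2$ argument you sketch is circular: knowing $\mathcal{E}_{1}(f_{n_k},\varphi)\to\mathcal{E}_{1}(g,\varphi)$ for $\varphi\in\mathcal{F}$ does not let you isolate the $L^{2}$ part $(f_{n_k},\varphi)_{L^{2}}$, so there is no starting point for the approximation. Nor does the hedge ``we have the stronger information $f_{n_k}\to f$ in $L^{2}$'' repair it: strong convergence gives $f_{n_k}\rightharpoonup f$ in $L^{2}$, but to invoke uniqueness of weak limits you still need $f_{n_k}\rightharpoonup g$ in $L^{2}$, which is exactly the unproved claim. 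The correct one-line fix is to observe that for each $\psi\in L^{2}$ the functional $u\mapsto(u,\psi)_{L^{2}}$ is bounded on $(\mathcal{F},\mathcal{E}_{1})$ since $\|u\|_{2}\le\mathcal{E}_{1}(u,u)^{1/2}$; by Riesz representation there is $h_{\psi}\in\mathcal{F}$ with $(u,\psi)_{L^{2}}=\mathcal{E}_{1}(u,h_{\psi})$, whence $(f_{n_k},\psi)_{L^{2}}=\mathcal{E}_{1}(f_{n_k},h_{\psi})\to\mathcal{E}_{1}(g,h_{\psi})=(g,\psi)_{L^{2}}$. (Equivalently: the inclusion $(\mathcal{F},\mathcal{E}_{1})\hookrightarrow L^{2}$ is a bounded operator, hence weak-to-weak continuous.) With that substitution the proof is complete.
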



\end{document}